\documentclass{article}
\usepackage{graphicx} 
\usepackage{amsfonts}
\usepackage{amsmath,amssymb}
\usepackage{amsthm}
\usepackage{bm}
\usepackage{xcolor}
\usepackage{authblk}
\usepackage{url}
\usepackage{wrapfig}
\usepackage{booktabs}
\usepackage{multirow}
\usepackage{caption}
\usepackage{subcaption}

\allowdisplaybreaks
\newcommand{\lJump}{[\![}
\newcommand{\rJump}{]\!]}
\newtheorem{theorem}{Theorem}

\newtheorem{remark}{Remark}
\newtheorem{lemma}{Lemma}

\newcommand{\tn}{{\textnormal{tan}}}
\newcommand\numberthis{\addtocounter{equation}{1}\tag{\theequation}}
\allowdisplaybreaks

\let\originallesssim\lesssim
\let\originalgtrsim\gtrsim

\DeclareRobustCommand{\lesssim}{%
  \mathrel{\mathpalette\lowersim\originallesssim}%
}
\DeclareRobustCommand{\gtrsim}{%
  \mathrel{\mathpalette\lowersim\originalgtrsim}%
}

\makeatletter
\newcommand{\lowersim}[2]{%
  \sbox\z@{$#1<$}%
  \raisebox{-\dimexpr\height-\ht\z@}{$\m@th#1#2$}%
}

\begin{document}

\title{ Analysis of a Surface Crouzeix-Raviart Element for the Stokes Problem }
\author{Manisha Chowdhury \thanks{manisha.chowdhury@ovgu.de}, Carolin Mehlmann \thanks{carolin.mehlmann@ovgu.de}}
\affil{\footnotesize Institute of Analysis and Numerics,
    Otto-von-Guericke University Magdeburg, Universitätsplatz 2,
  39106 Magdeburg, Germany}
\date{}

\maketitle

\begin{abstract}
Recently, increasing attention has been paid to finite element discretizations of vector-valued flow problems posed on curved surfaces. In this work, we study a surface Stokes system defined on a two-dimensional manifold embedded in three-dimensional space. The surface velocity field is approximated using a nonconforming Crouzeix-Raviart finite element on a polyhedral approximation of the surface, while the pressure is discretized by piecewise constant
functions. The governing equations involve the symmetric strain-rate tensor, whose approximation with Crouzeix-Raviart finite elements leads to spurious oscillations in the velocity field because the discrete Korn inequality fails on the nonconforming space. We therefore stabilize the momentum equation by adding an edge-jump penalty term, which restores the coervity of the bilinear term as the discrete Korn inequality holds for this stabilized version. Additionally, we establish that this finite element pair of  velocity and pressure spaces satisfies the discrete inf-sup compatibility condition. Furthermore, we derive optimal a-priori error estimates in both the energy norm and the  $L^2$-norm. These theoretical results are corroborated by numerical
experiments.
\end{abstract}

\section{Introduction}
Surface PDEs have attracted significant attention in recent years due to their wide range of applications in modeling emulsions, foams and biological membranes \textcolor{black}{\cite{arroyo2009, brenner2013, rahimi2013, rangamani2013, scriven1960, slattery2007}}, biophysics \cite{torres2019}, and large-scale climate systems \cite{COMBLEN2009, Mehlmann2021, MehlmannGutjahr2022}. Modern climate models require robust and numerically affordable numerical discretizations for flows on the surface of the Earth. For example, in climate modeling context, surface Stokes equations arise as a simplified model describing incompressible viscous flows constrained to evolve tangentially along the Earth's spherical geometry. Low-order nonconforming finite element methods offer a favorable balance between computational efficiency and numerical stability in the approximation of such surface PDEs \textcolor{black}{used in climate models \cite{Mehlmann2021, MehlmannGutjahr2022}.} \vspace{1mm}\\
Motivated by the numerical discretizations employed in approximation of the sea-ice dynamics in the ICON climate model \cite{JUNGCLAUS_2020}, we analyze a surface Crouzeix-Raviart finite element method for the surface Stokes equations in this paper. Earlier, Guo in \cite{guo2018} proposed and studied a scalar surface finite element method for the Laplace-Beltrami equation and later, Mehlmann in \cite{mehlmann2023} developed and analyzed a vector-valued surface Crouzeix-Raviart element for the vector-valued Laplacian problem. 
A major challenge in the construction of finite element methods for surface flow problems is the enforcement of tangentiality of the discrete velocity field with respect to the surface. In \cite{hardering2025, jankuhn2021, jankuhn2019, reusken2025}, $H^1$-conforming spaces were designed to approximate the discrete solutions, while the tangential constraint was imposed weakly through stabilization techniques such as using penalty terms or Lagrange multipliers. Besides, the authors in \cite{bonito2020, lederer2020a} developed tangential penalty free $H(\text{div}_{\Gamma})$-conforming finite element spaces for the surface Stokes problem. More recently, \cite{demlow2024} proposed a tangential penalty-free finite element method, while \cite{demlow2026} developed Taylor-Hood method without penalization for the numerical approximation of the surface Stokes equations.\vspace{1mm}\\
In this paper, we employ a low-order, simple to implement, computationally affordable tangential Crouzeix-Raviart finite element space \cite{mehlmann2023, mehlmann2025, Mehlmannetal2021} to approximate the vector-valued velocity, while the pressure variable is discretized by piecewise constant \textcolor{black}{functions.} First, the surface is approximated using polyhedrons and in each of its faces we establish  a local basis and express the Crouzeix-Raviart approximation in terms of the conormal vector $\boldsymbol{n}_E$ and the tangent vector $\boldsymbol{\tau}_E$ given in the edge midpoint \textcolor{black}{of a cell}. While $\boldsymbol{\tau}_E$ is continuous, $\boldsymbol{n}_E$ is discontinuous along an edge, see the right panel of Figure \ref{fig1}. By this construction, the elements of the Crouzeix-Raviart space are  tangential to the discrete surface, but $H^1(\Gamma_h)$ non-conforming. \vspace{1mm}\\
The novelty of this work lies in the rigorous analysis of this nonconforming stabilized formulation for the surface Stokes problem. The governing equations involve the symmetric strain-rate tensor, whose approximation with Crouzeix-Raviart finite elements leads to spurious oscillations in the velocity field due to the presence of a non-trivial kernel \cite{acosta2011}. To overcome \textcolor{black}{stability issues related to the symmetric tensor, $ \frac{1}{2}(\nabla_{\Gamma} \boldsymbol{u} +  \nabla_{\Gamma}^T \boldsymbol{u})$,} we consider a stabilized formulation of the momentum equation \cite{hansbo2003, mehlmann2025}. 
We prove the discrete inf-sup condition with respect to the energy norm, which includes the jump terms along the edges. To establish the compatibility condition related to the jump terms of the velocity, we introduce a suitable interpolation operator and derive the required interpolation estimates with respect to appropriate norms. Furthermore, in the energy-error analysis, additional nonconformity contributions arise due to the stabilization of the equation. The latter requires a careful treatment of the jump terms associated with discontinuities in the conormal vectors. In addition, the derivation of the $L^2$-error estimate constitutes one of the main analytical challenges of the present work. \textcolor{black}{Compared with the error analysis for scalar and vector-valued surface Laplace problems, the present setting involves additional analytical challenges arising from the pressure variable, the symmetric strain-rate operator, and the nonconforming velocity approximation. Consequently, the $L^2$-error estimate analysis requires a careful treatment of geometric approximation errors, velocity-pressure coupling terms, primal-dual consistency errors and nonconformity contributions.} 
These contributions are bounded through a combination of geometric approximation estimates, norm equivalence results, operator difference estimates, interpolation properties of the tangential Crouzeix-Raviart operator, and \textcolor{black}{the energy-error bound, established in this paper.} The proposed formulation provides a computationally efficient and easy-to-implement alternative to existing conforming and $H(\text{div}_{\Gamma})$-based approaches for the Stokes problem, while ensuring that the velocity vectors are tangential to the discrete surfaces. \vspace{1mm}\\
The organization of the paper is as follows: Section $\ref{sec2}$ presents the fundamental notions and Section $\ref{sec3}$ introduces the governing equations and its corresponding weak formulation. Section $\ref{sec4}$ presents the surface approximation followed by the construction of the finite \textcolor{black}{element} space and the discrete formulation, and contains the derivation of the discrete inf-sup compatibility condition. In Section $\ref{sec5}$, we derive the interpolation, energy-error, and $L^2$-error estimates. The theoretical results established in Section \ref{sec5} are numerically verified in Section \ref{sec6}.

\section{Preliminaries}\label{sec2}
This section introduces the fundamental concepts and notations used throughout the paper. Let $\Gamma \subset \mathbb{R}^3$ denote an oriented, connected, bounded, $C^\infty$-smooth surface with $\partial \Gamma=\emptyset$. At a point $\boldsymbol{x} \in \Gamma$ the outward unit normal vector $\boldsymbol{n}(\boldsymbol{x})$ is defined as $$\boldsymbol{n}(\boldsymbol{x})=\nabla d(\boldsymbol{x}),$$ where $ \nabla$ denotes the standard gradient operator in $\mathbb{R}^3$ and $d(\boldsymbol{x})$ is the signed distance function of $\Gamma$. Let $U_{\delta}$ be a tubular neighborhood of $\Gamma$ with width $\delta > 0$. For sufficiently small $\delta$, there exists a unique closest-point projection $p: U_{\delta} \rightarrow \Gamma$ satisfying $$p(\boldsymbol{x})= \boldsymbol{x}-d(\boldsymbol{x}) \boldsymbol{n}(p(\boldsymbol{x})).$$ 
Given a scalar function $\psi: \Gamma \rightarrow \mathbb{R}$, we define its extension $\psi^e : U_{\delta} \rightarrow \mathbb{R}$ by $$\psi^e(\boldsymbol{x}) = \psi(p(\boldsymbol{x})), \hspace{4mm} \forall \boldsymbol{x} \in U_{\delta}.$$ This construction extends $\psi$ smoothly along the normal direction to $\Gamma$. Analogously, we can lift a function $\phi: U_{\delta} \rightarrow \mathbb{R}$ using the lifting function $\phi^l: \Gamma \rightarrow \mathbb{R}$ defined as follows $$\phi^l(\boldsymbol{x})= \phi(\boldsymbol{\eta}(\boldsymbol{x})), \hspace{4mm} \forall \boldsymbol{x} \in \Gamma,$$ where $\boldsymbol{\eta}: \Gamma \rightarrow U_{\delta}$ is uniquely determined by $$\boldsymbol{x}=p(\boldsymbol{\eta})=\boldsymbol{\eta}-d(\boldsymbol{\eta}) \boldsymbol{n}(\boldsymbol{x}).$$ For vector-valued functions, we extend this lifting and extension framework component-wise. These two operators provide a convenient framework for transferring functions between the exact surface $\Gamma$ and its discrete approximation $\Gamma_h$, defined in Section 3. \vspace{2mm}\\
To introduce surface differential operators on $\Gamma$, we define the orthogonal projection operator $$\textbf{P} := \textbf{I}-\boldsymbol{n} \boldsymbol{n}^T,$$ which maps vectors in $\mathbb{R}^3$ onto the tangent plane of $\Gamma$. Here $\textbf{I}$ denotes the identity matrix in $\mathbb{R}^3 \times \mathbb{R}^3$. The covariant derivative of a vector field $\boldsymbol{w}$ is defined as $$\nabla_{\Gamma} \boldsymbol{w}: = \textbf{P} \nabla \boldsymbol{w}^e \textbf{P},$$ where $\boldsymbol{w}^e:U_{\delta} \rightarrow \mathbb{R}^3$ is the extension to $\boldsymbol{w}: \Gamma \rightarrow \mathbb{R}^3$. \vspace{1mm}\\
The surface divergence operator acting on the vector field $\boldsymbol{w}$ is defined by $$\text{div}_{\Gamma} \boldsymbol{w} := \text{tr}(\nabla_{\Gamma} \boldsymbol{w})= \text{tr}(\textbf{P} \nabla \boldsymbol{w}^e \textbf{P}).$$ For a matrix valued function $A: \Gamma \rightarrow \mathbb{R}^3 \times \mathbb{R}^3$ we define the divergence as follows: 
$$\text{div}_{\Gamma}(A): = \left( \text{div}_{\Gamma}(\textbf{e}_1^TA), \text{div}_{\Gamma}(\textbf{e}_2^TA), \text{div}_{\Gamma}(\textbf{e}_3^TA) \right),$$ where $\{\textbf{e}_1, \textbf{e}_2, \textbf{e}_3 \}$ are the standard basis vectors of $\mathbb{R}^3$. \vspace{2mm}\\
For $\Omega \subset \Gamma$ the $L^2$-inner product is denoted by $(\cdot, \cdot)_{\Omega}$ and the standard $L^2$-norm and $L^{\infty}$-norm are denoted by $\| \cdot \|_{L^2(\Omega)}$ and $\| \cdot \|_{L^{\infty}(\Omega)}$, respectively. $L^2_0(\Omega)$ is the space of square-integrable functions on $\Omega$ with zero mean value. For scalar-valued functions, the standard Sobolev spaces are given by $H^m(\Omega)$, for $m=1,2$. Let $\boldsymbol{H}^m(\Omega)$ denote the Sobolev space $(H^m(\Omega))^3$ for vector-valued functions on the surface with the norm and semi-norm as $$\| \boldsymbol{w} \|_{\boldsymbol{H}^m(\Omega)}^2: = \sum_{i=0}^m \| (\nabla \textbf{P})^i \boldsymbol{w}^e  \|^2_{\boldsymbol{L}^2(\Omega)} \text{ and } |\boldsymbol{w}|_{\boldsymbol{H}^m(\Omega)}^2: =\| (\nabla \textbf{P})^m \boldsymbol{w}^e  \|^2_{\boldsymbol{L}^2(\Omega)},$$ respectively. Here, analogously we denote the space $(L^2(\Omega))^3$ by $\boldsymbol{L}^2(\Omega)$ for the vector-valued functions. Now, the corresponding spaces for tangential vector-valued functions are given as 
$$\boldsymbol{L}^2_{\text{tan}}(\Gamma): = \{\boldsymbol{w} \in \boldsymbol{L}^2(\Gamma) \mid \boldsymbol{w} \cdot \boldsymbol{n} = 0  \},$$
$$\boldsymbol{H}^m_{\text{tan}}(\Gamma): = \{\boldsymbol{w} \in \boldsymbol{H}^m(\Gamma) \mid \boldsymbol{w} \cdot \boldsymbol{n} = 0  \}.$$ 
$\boldsymbol{L}^2_{\text{tan}}(\Gamma)$ is equipped with the standard $\boldsymbol{L}^2$-norm over $\Gamma$ and the norm corresponding to $\boldsymbol{H}^m_{\text{tan}}(\Gamma)$ is $$\| \boldsymbol{w} \|_{\boldsymbol{H}^m_{\text{tan}}(\Omega)}:= \sum\limits_{i=0}^m \| (\nabla_{\Gamma})^i \boldsymbol{w} \|^2_{\boldsymbol{L}^2(\Omega)}.$$
From Lemma 2.3 in \cite{HansboLarson2020}, it follows that, for any $\boldsymbol{w} \in \boldsymbol{H}^m_{\text{tan}}(\Gamma)$, the norms $\| \boldsymbol{w} \|_{\boldsymbol{H}^m(\Gamma)}$ and $\| \boldsymbol{w} \|_{\boldsymbol{H}^m_{\text{tan}}(\Gamma)}$ are equivalent.

\section{Continuous Problem}\label{sec3}
The tangential surface Stokes equations \cite{bonito2020, demlow2024, lederer2020a} are to find a tangential velocity vector field $\boldsymbol{u}: \Gamma \rightarrow \mathbb{R}^3$ and a scalar pressure field $p: \Gamma \rightarrow \mathbb{R}$ with $\int_{\Gamma} p \hspace{1mm} ds= 0$, such that
\begin{equation}
\begin{split}
    \label{eq1}
    -  \textbf{P} \text{div}_{\Gamma} (E_{\Gamma} (\boldsymbol{u})) + \boldsymbol{u} - \nabla_{\Gamma} p & = \boldsymbol{f}, \hspace{4mm} \text{on} \hspace{2mm} \Gamma, \\
    \text{div}_{\Gamma} \boldsymbol{u} & = 0, \hspace{4.5mm} \text{on} \hspace{2mm} \Gamma,
\end{split}
\end{equation}
where $E_{\Gamma}(\boldsymbol{u})= \frac{1}{2}\left( \nabla_{\Gamma} \boldsymbol{u} + ( \nabla_{\Gamma} \boldsymbol{u})^T \right)$ is the tangential strain rate tensor. Since $\Gamma$ is a closed surface, no boundary conditions are imposed. The zeroth-order term $\boldsymbol{u}$ is introduced to avoid technical difficulties associated with the non-trivial kernel of the surface strain operator on closed surfaces.\vspace{2mm}\\
The variational formulation is to find a tangential velocity field $\boldsymbol{u} \in \boldsymbol{H}^1_{\text{tan}}(\Gamma)$ and a pressure field $p \in L_0^2(\Gamma)$ such that
\begin{equation}
    \begin{split}
    \label{eq2}
         a(\boldsymbol{u}, \boldsymbol{v}) + b(\boldsymbol{v},p) & = l(\boldsymbol{v}) \hspace{4mm} \forall \boldsymbol{v} \in \boldsymbol{H}^1_{\text{tan}}(\Gamma), \\
    b(\boldsymbol{u},q) & = 0 \hspace{8.9mm} \forall q \in L_0^2(\Gamma),
    \end{split}
\end{equation}
where the bilinear and linear functional forms are 
\begin{align}
    a(\boldsymbol{u}, \boldsymbol{v}) & =  \int_{\Gamma} E_{\Gamma} (\boldsymbol{u}) : E_{\Gamma} (\boldsymbol{v}) \, ds + \int_{\Gamma} \boldsymbol{u} \cdot \boldsymbol{v} \, ds= (E_{\Gamma} (\boldsymbol{u}), E_{\Gamma} (\boldsymbol{v}))_{\Gamma} + (\boldsymbol{u}, \boldsymbol{v})_{\Gamma}, \\
    b(\boldsymbol{u}, q) & = \int_{\Gamma} (\text{div}_{\Gamma} \boldsymbol{u}) q \, ds = (q, \text{div}_{\Gamma} \boldsymbol{u})_{\Gamma}, \\
    l(\boldsymbol{v}) & = (\boldsymbol{f}, \boldsymbol{v})_{\Gamma}.
\end{align}
It is straightforward to verify that the terms $a(\cdot, \cdot)$, $b(\cdot,\cdot)$ and $l(\cdot)$ given in equations (3)-(5) are continuous with respect to the standard norms on the corresponding spaces. For $C^2$-smooth, compact and closed surface $\Gamma$, the coercivity of $a(\cdot, \cdot)$:
 \begin{equation}
    \label{eq6}
      \| \boldsymbol{u} \|_{\boldsymbol{L}^2(\Gamma)} + \| E_{\Gamma} (\boldsymbol{u}) \|_{\boldsymbol{L}^2(\Gamma)} \geq C_1 \| \boldsymbol{u} \|_{\boldsymbol{H}^1(\Gamma)} \hspace{4mm} \forall \boldsymbol{u} \in \boldsymbol{H}^1_{\tan} (\Gamma),
    \end{equation}
and the continuous inf-sup condition:
 \begin{equation}
    \label{eq7}
        \underset{\boldsymbol{v} \in \boldsymbol{H}^1_{\tan}(\Gamma)}{sup} \frac{b(\boldsymbol{v}, q)}{\| \boldsymbol{v} \|_{\boldsymbol{H}^1_{\tan}(\Gamma)}} \geq \beta \| q \|_{L^2(\Gamma)} \hspace{4mm} \forall q \in L^2_{0} (\Gamma),
    \end{equation}   
have been established by Jankuhn et. al. in \cite{jankuhn2018} (see Lemma 4.1 and Lemma 4.2), where $C_1$ and $\beta$ are positive constants. They have also shown the well-posedness of the variational formulation (\ref{eq2}) in Theorem 4.3 in \cite{jankuhn2018}. Further Olshanskii et. al. in \cite{olshanskii2018} (see Lemma 2.1) established the following regularity estimate (\ref{eq8}), which play central role in the derivation of the \textcolor{black}{a priori} error estimations in Section \ref{sec5}.
    \begin{lemma}\label{lem3}{} 
     Assume $\Gamma$ is $C^2$ smooth, compact and closed. The weak problem (\ref{eq2}) is well-posed and let $(\boldsymbol{u},p)$ be the unique solution. If $\boldsymbol{f} \in \boldsymbol{L}^2_{\tan}(\Gamma)$, then $\boldsymbol{u} \in \boldsymbol{H}_{\tan}^2(\Gamma)$ and $p \in H^1(\Gamma) \cap L_0^2(\Gamma)$ and there exists a constant $C_2 > 0$ such that,
     \begin{equation}
     \label{eq8}
         \| \boldsymbol{u} \|_{\boldsymbol{H}^2_{\tan}(\Gamma)} + \| p \|_{H^1(\Gamma)} \leq C_2 \| \boldsymbol{f} \|_{\boldsymbol{L}^2(\Gamma)}.
     \end{equation}
\end{lemma}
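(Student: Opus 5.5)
The plan is to reduce the surface Stokes problem to elliptic regularity for the surface Hodge/Bochner Laplacian, with the pressure term moved to the right-hand side.

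\emph{Step 1: well-posedness.} This follows from Theorem 4.3 in \cite{jankuhn2018}. The coercivity bound (\ref{eq6}) together with the zeroth-order term gives $a(\boldsymbol{v},\boldsymbol{v}) \geq c\|\boldsymbol{v}\|^2_{\boldsymbol{H}^1(\Gamma)}$ on $\boldsymbol{H}^1_{\tan}(\Gamma)$. Combined with the inf-sup condition (\ref{eq7}), Brezzi's theory yields a unique solution $(\boldsymbol{u},p)$ and the stability bound
\[
\|\boldsymbol{u}\|_{\boldsymbol{H}^1(\Gamma)}+\|p\|_{L^2(\Gamma)}\lesssim \|\boldsymbol{f}\|_{\boldsymbol{L}^2(\Gamma)}.
\]

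\emph{Step 2: rewrite the operator.} For tangential divergence-free fields I would use the identity
\[
2\textbf{P}\,\text{div}_\Gamma E_\Gamma(\boldsymbol{u}) = \textbf{P}\,\text{div}_\Gamma\nabla_\Gamma\boldsymbol{u} + \nabla_\Gamma \text{div}_\Gamma\boldsymbol{u} + K\boldsymbol{u},
\]
where $K$ is the Gauss curvature. This is a commutation of covariant derivatives that produces a Ricci term. Since $\text{div}_\Gamma\boldsymbol{u}=0$, the momentum equation reduces to a vector Laplace problem
\[
-\tfrac12\Delta_B \boldsymbol{u} + \boldsymbol{u} - \tfrac12 K\boldsymbol{u} = \boldsymbol{f} + \nabla_\Gamma p ,
\]
which holds weakly.

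\emph{Step 3: regularity of $p$.} This is the main obstacle, since a priori $p\in L^2$ only, and I would handle it first. Test the momentum equation with $\boldsymbol{v}=\nabla_\Gamma\phi$, choosing $\phi$ to solve $\Delta_\Gamma\phi=q$ for arbitrary $q\in L^2_0(\Gamma)$; scalar elliptic regularity gives $\phi\in H^2$. Then $b(\boldsymbol{v},p)=(q,p)_\Gamma$. Integrating by parts in $a(\boldsymbol{u},\nabla_\Gamma\phi)$ and using $\text{div}_\Gamma\boldsymbol{u}=0$ together with the identity from Step 2, this term reduces to curvature terms bounded by $\|\boldsymbol{u}\|_{\boldsymbol{H}^1}\|\phi\|_{H^1}$. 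This shows the distribution $\Delta_\Gamma p$ lies in $H^{-1}$, with norm bounded by $\|\boldsymbol{f}\|_{L^2}+\|\boldsymbol{u}\|_{H^1}$. The same conclusion can be reached more directly by taking the surface divergence of the momentum equation. Scalar regularity on the smooth closed surface then gives $p\in H^1(\Gamma)$ with
\[
\|p\|_{H^1}\lesssim \|\boldsymbol{f}\|_{L^2}+\|\boldsymbol{u}\|_{H^1}\lesssim\|\boldsymbol{f}\|_{L^2}.
\]

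\emph{Step 4: regularity of $\boldsymbol{u}$.} The right-hand side $\boldsymbol{f}+\nabla_\Gamma p-\tfrac12 K\boldsymbol{u}$ now lies in $\boldsymbol{L}^2_{\tan}$. Elliptic $H^2$-regularity for the Bochner Laplacian on the compact $C^2$ surface applies; this can be obtained by local charts and the difference-quotient technique, or from the equivalence of norms in Lemma 2.3 of \cite{HansboLarson2020}. It gives
\[
\|\boldsymbol{u}\|_{\boldsymbol{H}^2_{\tan}}\lesssim \|\boldsymbol{f}\|_{L^2}+\|p\|_{H^1}+\|\boldsymbol{u}\|_{L^2}.
\]
Combining this with Steps 1 and 3 yields (\ref{eq8}).

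In Step 3, the curvature terms need $C^2$ smoothness of $\Gamma$ to be bounded. I would treat the curvature commutator bookkeeping carefully there, as in Lemma 2.1 of \cite{olshanskii2018}.
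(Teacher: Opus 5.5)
The paper gives no proof of this lemma. Well-posedness is quoted from Theorem 4.3 of \cite{jankuhn2018}, and the bound (\ref{eq8}) is quoted from Lemma 2.1 of \cite{olshanskii2018}. Your proposal replaces these citations with an actual argument, and its structure is sound.

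The best part is the decoupling in Step 3. Combine the weak Bochner identity $((\nabla_\Gamma \boldsymbol{w})^T,\nabla_\Gamma \boldsymbol{v})_\Gamma=(\text{div}_\Gamma \boldsymbol{w},\text{div}_\Gamma \boldsymbol{v})_\Gamma-(K\boldsymbol{w},\boldsymbol{v})_\Gamma$ on $\boldsymbol{H}^1_{\tn}(\Gamma)$ with the symmetry of the covariant Hessian and $\text{div}_\Gamma\boldsymbol{u}=0$. This gives $a(\boldsymbol{u},\nabla_\Gamma\phi)=-(K\boldsymbol{u},\nabla_\Gamma\phi)_\Gamma$, hence $(p,\Delta_\Gamma\phi)_\Gamma=(\boldsymbol{f}+K\boldsymbol{u},\nabla_\Gamma\phi)_\Gamma$. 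It follows that $\|\nabla_\Gamma p\|_{L^2(\Gamma)}\le\|\boldsymbol{f}\|_{\boldsymbol{L}^2(\Gamma)}+\|K\|_{L^\infty(\Gamma)}\|\boldsymbol{u}\|_{\boldsymbol{L}^2(\Gamma)}$, which uses only bounded curvature. After that, $\boldsymbol{u}$ is the weak solution of $-\tfrac12\Delta_B\boldsymbol{u}+\boldsymbol{u}=\boldsymbol{f}+\nabla_\Gamma p+\tfrac12 K\boldsymbol{u}$. Note the plus sign on the curvature term; your minus sign is a harmless slip. All remaining difficulty is then isolated in one vector elliptic estimate, which makes explicit where the geometry enters, something the bare citation does not show.

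The step whose justification fails as written is Step 4.
\begin{itemize}
\item Lemma 2.3 of \cite{HansboLarson2020} only says that the full and covariant $H^m$-norms are equivalent for a tangential field already known to lie in $\boldsymbol{H}^m$. It gives no a priori $H^2$ bound, so it cannot stand in for elliptic regularity.
\item The difference-quotient argument in charts of a merely $C^2$ surface does not work directly either. The metric is only $C^1$, so the Christoffel symbols in the chart form of $\nabla_\Gamma\boldsymbol{u}$ are only continuous. The first-order terms they generate then do not give $L^2$ data; extrinsically, this is the statement that $\nabla_\Gamma\boldsymbol{n}$ is only $C^0$.
\end{itemize}
What you actually need is $H^2$-regularity of $-\tfrac12\Delta_B+I$ on tangential fields. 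This is standard once $\Gamma\in C^3$. Test with $\mathbf{P}\boldsymbol{w}$ for arbitrary $\boldsymbol{w}\in\boldsymbol{H}^1(\Gamma)$; the problem becomes componentwise Laplace--Beltrami equations whose $L^2$ data involve $\nabla_\Gamma\boldsymbol{n}$ and its first derivatives, and scalar regularity then applies. The paper assumes $\Gamma$ is $C^\infty$ from Section \ref{sec2} onward, so this is not a real obstruction. You should, however, state this regularity input explicitly rather than invoke the norm equivalence, and be aware that with only $C^2$ smoothness your Step 4 is not justified.
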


\section{Non-conforming finite element approximation} \label{sec4}
\subsection{Surface approximation and associated results}\label{sec4.1}
Let $\Gamma_h$ denote a polyhedral approximation of the surface $\Gamma$, obtained through a triangulation $\mathcal{T}_h$ consisting of planar triangular elements $K$. The triangulation is performed such that the vertices lie on $\Gamma$ and $\Gamma_h$, where  $\Gamma_h$=$\bigcup\limits_{K \in \mathcal{T}_h} K$. The associated mesh size is defined by $h \!:=\underset{K \in \mathcal{T}_h}{\text{max}} \hspace{1mm} \text{diam}(K).$  $\mathcal{T}_h$ is assumed to be shape-regular and quasi-uniform. Let $\boldsymbol{n}_h^{K}$ denote the outward normal vector to $K $. Therefore, the projection operator onto the tangent space of the discrete surface $\Gamma_h$ is given by $$\textbf{P}_h= \textbf{I}-\boldsymbol{n}_h^{K} (\boldsymbol{n}_h^{K})^T.$$ 
For the sake of simplicity, we drop the superscript $K$ from now on.
\begin{figure}[htbp]
\centering
\begin{subfigure}{.45\textwidth}
  \centering
  \includegraphics[width=.8\linewidth]{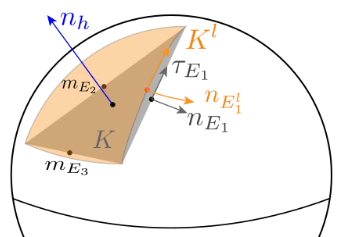}
\end{subfigure}%
\begin{subfigure}{.45\textwidth}
  \centering
  \includegraphics[width=.6\linewidth]{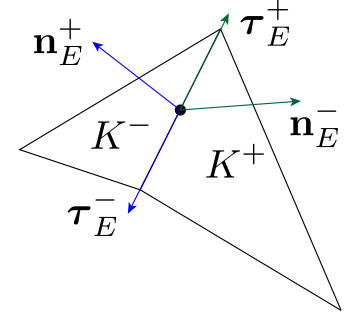}
\end{subfigure}
\caption{Left: Tangential plane coinciding with the sphere at the vertices of the triangle; Right: The conormal, $\boldsymbol{n}_E^\pm$ and tangential, $\boldsymbol{\tau}_E^{\pm}$ vectors sharing an edge $E$. }
\label{fig1}
\end{figure}
Let $\mathcal{E}_h$ denote the collection of all the interior edges and $E$ be an interior edge shared by two neighbouring triangles $K^+$ and $K^-$. Associated with each element $K^{\pm}$, we define the unit tangential vector $\boldsymbol{\tau}_E^{\pm}$ along the edge $E$, and the corresponding unit conormal vector $\boldsymbol{n}_E^{\pm}$, which is outward pointing, orthogonal to $E$, and lies in the same plane as $\boldsymbol{\tau}_{E}^{\pm}$. \textcolor{black}{Figure} \ref{fig1} illustrates the orientation of the vectors. Further details regarding their construction and properties can be found in \cite{mehlmann2023}.\vspace{1mm}\\ 
Using the closest-point projection function, each planar triangle $K \in \mathcal{T}_h$ can be mapped onto the exact surface $\Gamma$. We denote the resulting curved element by $K^l \!:$=$p(K)$ such that $\Gamma$=$\bigcup\limits_{K \in \mathcal{T}_h} p(K)$=$\bigcup\limits_{K^l \in \mathcal{T}_h^l} K^l$, where $\mathcal{T}_h^l= \{K^l \!: \! K \in \mathcal{T}_h  \} $ denotes the lifted triangulation of $\Gamma$. The tangential and conormal vectors on the discrete surface, $\boldsymbol{\tau}_E$ and $\boldsymbol{n}_E$ can be lifted analogously onto the exact surface, denoting them by $(\boldsymbol{\tau}_{E})^l$ and $(\boldsymbol{n}_{E})^l$, respectively. Moreover, $\boldsymbol{\tau}_{E^l}$ and $\boldsymbol{n}_{E^l}$ denote the tangential and conormal vectors on the exact curved  surface $\Gamma$. They have the following properties:
$$\boldsymbol{n}_{E^l}^{+}=-\boldsymbol{n}_{E^l}^{-} \hspace{2mm} \text{and} \hspace{2mm} \boldsymbol{\tau}_{E^l}^{+}=-\boldsymbol{\tau}_{E^l}^{-}.$$
In contrast, on the discrete surface $\Gamma_h$,
$$\boldsymbol{n}_{E}^{+} \neq - \boldsymbol{n}_{E}^{-} \hspace{2mm} \text{but} \hspace{2mm} \boldsymbol{\tau}_{E}^{+} = -\boldsymbol{\tau}_{E}^{-}.$$
These tangential and conormal vectors on $\Gamma_h$ play a fundamental role in the construction of the \textcolor{black}{ finite element space in Section} \ref{ss421}.
\begin{remark}
    Clearly, $\boldsymbol{\tau}_E$ and $\boldsymbol{n}_E$ are orthogonal, and so are $\boldsymbol{\tau}_{E^l}$ and $\boldsymbol{n}_{E^l}$, but $\boldsymbol{\tau}_E$ and $\boldsymbol{\tau}_{E^l}$, as well as $\boldsymbol{n}_E$ and $\boldsymbol{n}_{E^l}$ do not point in the same direction. Therefore, geometric errors $(\boldsymbol{\tau}_{E^l}- \boldsymbol{\tau}_{E})$ and $(\boldsymbol{n}_{E^l}- \boldsymbol{n}_{E})$ arise.
\end{remark}
In the following, we mention several geometric approximation results that will be used throughout the analysis. In this paper, we assume that $\Gamma_h \subset U_{\delta}$ and $ds$ and $ds_h$ denote the surface measures of $\Gamma$ and $\Gamma_h$, respectively. Following Larson et. al. in \cite{larsson2017} (see eq. (3.16)), for any $\boldsymbol{x} \in \Gamma_h$, there exists $\mu_h$  such that $ds \circ p= \mu_h ds_h$ holds and we obtain
\begin{equation}
    \label{m1}
     \|1-\mu_h\|_{L^{\infty}(K)}  \leq Ch^2. 
\end{equation}
Similarly, Burman et. al. showed in \cite{burman2019} (see eq. (3.9)) that for the edge measures $d \sigma$ and $d \sigma_h$, there exists $\mu_h^E$ such that $d\sigma \circ p= \mu_h^E d\sigma_h$ holds and similarly
\begin{equation}
     \label{m2}
     \|1-\mu_h^E\|_{L^{\infty}(E)}  \leq Ch^2,
\end{equation}
where $C$ is a generic constant independent of the mesh size $h$.
\begin{lemma}[Geometric approximation results]
\label{lem4} 
    Let $\Gamma_h \subset U_{\delta}$ be the polyhedral approximation of $\Gamma$ with the properties mentioned above and $\mathbf{P}^e$ and $\boldsymbol{n}_{E^l}^{e}$ denote the extension of $\mathbf{P}$ and $\boldsymbol{n}_{E^l}$, respectively, on $\Gamma_h$. For a sufficiently small mesh size $h$, the following results hold:
    \begin{align}
    \label{9a}
    \| \boldsymbol{n}^e- \boldsymbol{n}_h \|_{L^{\infty}(K)} & \leq C h, \\
        \label{eq9}
        \| \mathbf{P}^e- \mathbf{P}_h \|_{L^{\infty}(K)} & \leq C h, \\
        \label{eq10}
        \| \boldsymbol{n}_{E^l}^{e\pm} - \boldsymbol{n}_{E}^{\pm} \|_{L^{\infty}(E)} & \leq C h, \\
        \label{eq11}
        \| \boldsymbol{n}_{E^l}^{e\pm} - \mathbf{P}^e \boldsymbol{n}_{E}^{\pm} \|_{L^{\infty}(E)} & \leq C h^2, \\
         \label{eq12}
        \| \mathbf{P}_h \boldsymbol{n}_{E^l}^{e\pm} -  \boldsymbol{n}_{E}^{\pm} \|_{L^{\infty}(E)} & \leq C h^2.
    \end{align}
\end{lemma}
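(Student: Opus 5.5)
The approach is to reduce everything to two classical facts about the closest-point projection on a $C^\infty$ surface: the interpolation-type bound $\|d\|_{L^\infty(\Gamma_h)}\le Ch^2$ (since the vertices of $K$ lie on $\Gamma$ and $d$ is smooth), and the normal bound (\ref{9a}). For (\ref{9a}), $K$ is planar with vertices on $\Gamma$. Each edge direction of $K$ equals a tangent direction of $\Gamma$ at a vertex up to $O(h)$ (Taylor expansion of the surface chart along the edge). Then $\boldsymbol{n}_h$, the normalized cross product of two edge vectors, differs from $\boldsymbol{n}(\boldsymbol{x}_0)$ at a vertex $\boldsymbol{x}_0$ by $O(h)$; shape regularity keeps the angle between the edges bounded away from $0$, so the normalization is stable. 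Smoothness of $\boldsymbol{n}^e=\nabla d$ gives $|\boldsymbol{n}^e(\boldsymbol{x})-\boldsymbol{n}(\boldsymbol{x}_0)|\le Ch$ on $K$. The bound (\ref{eq9}) then follows at once from
\[
\mathbf{P}^e-\mathbf{P}_h=\boldsymbol{n}_h\boldsymbol{n}_h^T-\boldsymbol{n}^e(\boldsymbol{n}^e)^T=(\boldsymbol{n}_h-\boldsymbol{n}^e)\boldsymbol{n}_h^T+\boldsymbol{n}^e(\boldsymbol{n}_h-\boldsymbol{n}^e)^T .
\]

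For the conormal estimates, I write $\boldsymbol{n}_E^\pm=\boldsymbol{\tau}_E\times\boldsymbol{n}_h^\pm$ (up to orientation) and $\boldsymbol{n}_{E^l}^{\pm}=\boldsymbol{\tau}_{E^l}\times\boldsymbol{n}$. The key quantitative input is that the lifted edge $E^l=p(E)$ has unit tangent $\boldsymbol{\tau}_{E^l}^e=\mathbf{P}^e\boldsymbol{\tau}_E/|\mathbf{P}^e\boldsymbol{\tau}_E|+O(h^2)$. To see this, differentiate $p(\boldsymbol{x}(t))$ along the straight edge: $Dp=\mathbf{P}-d\mathbf{H}$ with $|d|\le Ch^2$. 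Moreover $|\boldsymbol{n}^e\cdot\boldsymbol{\tau}_E|\le Ch$, so $|\mathbf{P}^e\boldsymbol{\tau}_E|=1+O(h^2)$. Hence $\boldsymbol{\tau}_{E^l}^e-\boldsymbol{\tau}_E=O(h)$ and $\boldsymbol{\tau}_{E^l}^e-\mathbf{P}^e\boldsymbol{\tau}_E=O(h^2)$. Combining this with (\ref{9a}) in the cross product immediately gives (\ref{eq10}).

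For (\ref{eq11}) I will use an orthogonality argument instead of the cross product. Both $\boldsymbol{n}_{E^l}^{e\pm}$ and $\mathbf{P}^e\boldsymbol{n}_E^{\pm}$ lie in the tangent plane of $\Gamma$ at $p(\boldsymbol{x})$. In that two-dimensional plane, $\boldsymbol{n}_{E^l}^{e}$ is the unit vector orthogonal to $\boldsymbol{\tau}_{E^l}^e$ with the correct orientation. The vector $\mathbf{P}^e\boldsymbol{n}_E$ satisfies
\[
\mathbf{P}^e\boldsymbol{n}_E\cdot\boldsymbol{\tau}_{E^l}^e=\boldsymbol{n}_E\cdot\mathbf{P}^e\boldsymbol{\tau}_E+O(h^2)=\boldsymbol{n}_E\cdot\boldsymbol{\tau}_E-(\boldsymbol{n}_E\cdot\boldsymbol{n}^e)(\boldsymbol{n}^e\cdot\boldsymbol{\tau}_E)+O(h^2).
\]
Here $\boldsymbol{n}_E\cdot\boldsymbol{\tau}_E=0$, and the middle term is a product of two $O(h)$ quantities, since $\boldsymbol{n}_E\perp\boldsymbol{n}_h$ gives $\boldsymbol{n}_E\cdot\boldsymbol{n}^e=O(h)$ and $\boldsymbol{n}^e\cdot\boldsymbol{\tau}_E=O(h)$ as above. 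Likewise $|\mathbf{P}^e\boldsymbol{n}_E|^2=1-(\boldsymbol{n}_E\cdot\boldsymbol{n}^e)^2=1+O(h^2)$. So $\mathbf{P}^e\boldsymbol{n}_E$ is a unit vector up to $O(h^2)$, orthogonal to $\boldsymbol{\tau}_{E^l}^e$ up to $O(h^2)$, and lies in the same plane. By (\ref{eq10}) it is $O(h)$-close to $\boldsymbol{n}_{E^l}^e$, which fixes the sign. Therefore the difference is $O(h^2)$.

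The estimate (\ref{eq12}) is the mirror argument in the plane of $K$. Both $\mathbf{P}_h\boldsymbol{n}_{E^l}^{e}$ and $\boldsymbol{n}_E$ lie in that plane, and $\boldsymbol{n}_E\perp\boldsymbol{\tau}_E$. We have
\[
\mathbf{P}_h\boldsymbol{n}_{E^l}^e\cdot\boldsymbol{\tau}_E=\boldsymbol{n}_{E^l}^e\cdot\boldsymbol{\tau}_E=\boldsymbol{n}_{E^l}^e\cdot(\boldsymbol{\tau}_E-\boldsymbol{\tau}_{E^l}^e),
\]
and $\boldsymbol{\tau}_E-\boldsymbol{\tau}_{E^l}^e=(\boldsymbol{\tau}_E-\mathbf{P}^e\boldsymbol{\tau}_E)+O(h^2)=(\boldsymbol{n}^e\cdot\boldsymbol{\tau}_E)\boldsymbol{n}^e+O(h^2)$, which is orthogonal to $\boldsymbol{n}_{E^l}^e$ up to $O(h^2)$. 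Similarly $|\mathbf{P}_h\boldsymbol{n}_{E^l}^e|=1+O(h^2)$, because $\boldsymbol{n}_h\cdot\boldsymbol{n}_{E^l}^e=(\boldsymbol{n}_h-\boldsymbol{n}^e)\cdot\boldsymbol{n}_{E^l}^e=O(h)$. Orientation again follows from (\ref{eq10}).

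The main obstacle I expect is the $O(h^2)$ (not $O(h)$) relation $\boldsymbol{\tau}_{E^l}^e=\mathbf{P}^e\boldsymbol{\tau}_E+O(h^2)$. This requires both $|d|\le Ch^2$ on the edge and a careful bound of the second-order term in $Dp$; alternatively I would simply cite the corresponding estimates in \cite{burman2019,mehlmann2023}. All the other steps are bookkeeping of products of $O(h)$ terms.
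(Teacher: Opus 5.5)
Your proof is correct. It differs from the paper mainly in that the paper gives no derivation. It cites Appendix~A of \cite{larsson2017} for (\ref{9a})--(\ref{eq11}) and Lemma~3.1 of \cite{mehlmann2023} for (\ref{eq12}).

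You instead reduce all five bounds to two explicit facts:
\begin{itemize}
\item $\|d\|_{L^\infty(\Gamma_h)}\le Ch^2$, since the linear interpolant of $d$ on each $K$ vanishes because the vertices lie on $\Gamma$;
\item the exact identity $Dp=\mathbf{P}^e-d\,\nabla^2 d$.
\end{itemize}
You then work with orthonormal bases of the two-dimensional planes $T_{p(\boldsymbol{x})}\Gamma$ and $\mathrm{span}\{\boldsymbol{n}_E,\boldsymbol{\tau}_E\}$.

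The step you flag as the main obstacle is in fact immediate. The identity for $Dp$ is exact and $\nabla^2 d$ is bounded on $U_\delta$, so $(p\circ\boldsymbol{x})'(t)=\mathbf{P}^e\boldsymbol{\tau}_E+O(h^2)$ follows from $|d|\le Ch^2$ alone. Combined with $|\mathbf{P}^e\boldsymbol{\tau}_E|^2=1-(\boldsymbol{n}^e\cdot\boldsymbol{\tau}_E)^2=1+O(h^2)$, this gives $\boldsymbol{\tau}^e_{E^l}=\mathbf{P}^e\boldsymbol{\tau}_E+O(h^2)$.

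What your route buys is transparency. It shows why (\ref{eq11}) and (\ref{eq12}) gain a power of $h$ over (\ref{eq10}). The leading $O(h)$ part of $\boldsymbol{n}^e_{E^l}-\boldsymbol{n}_E$ points along $\boldsymbol{n}^e$, and equally, up to $O(h^2)$, along $\boldsymbol{n}_h$. This part is removed by $\mathbf{P}^e$ (resp.\ $\mathbf{P}_h$), so only products of two $O(h)$ quantities survive. Your route also handles (\ref{eq11}) and (\ref{eq12}) by one symmetric argument rather than two separate references. The paper's citation buys brevity, but leaves the reader to check that the conventions of the cited works match the present setting.

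One point you should state explicitly rather than writing ``up to orientation.'' For small $h$, $p|_K$ is orientation preserving, because $\boldsymbol{n}^e\cdot\boldsymbol{n}_h>0$ by (\ref{9a}). Hence $\boldsymbol{n}_E^\pm=\boldsymbol{\tau}_E^\pm\times\boldsymbol{n}_h^\pm$ and $\boldsymbol{n}_{E^l}^{\pm}=\boldsymbol{\tau}_{E^l}^{\pm}\times\boldsymbol{n}$ hold with the same sign. This is what justifies using (\ref{eq10}) to fix the sign in your arguments for (\ref{eq11}) and (\ref{eq12}).
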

\begin{proof}
    The proof of (\ref{9a})-(\ref{eq11}) can be found in Appendix A of \cite{larsson2017} and the derivation of (\ref{eq12}) is given in Lemma 3.1 of \cite{mehlmann2023}.
\end{proof}

\begin{remark}[Geometric approximation]
   As noted in \cite{mehlmann2023}, the estimates (\ref{eq10})-(\ref{eq12}) also hold for the tangential vectors $\boldsymbol{\tau}_E$ and $\boldsymbol{\tau}_{E^l}$.
\end{remark}
Before proceeding further, here we introduce the broken norms defined on the spaces $\boldsymbol{L}^2(\Omega_h)$, $\boldsymbol{H}^m(\Omega_h)$ and $\boldsymbol{H}_{\text{tan}}^m (\Omega_h)$ for $\Omega_h \subset \Gamma_h$. For an element $K$ $\in \Omega_h$, we define
\begin{align*}
 \| \boldsymbol{w} \|_{\boldsymbol{L}^2(\Omega_h)}^2 & : = \sum_{K \in \Omega_h} \| \boldsymbol{w} \|_{\boldsymbol{L}^2(K)}, \\
    \| \boldsymbol{w} \|_{\boldsymbol{H}^m(\Omega_h)}^2 & : =  \sum_{i=0}^m \| (\nabla \mathbf{P}_h)^i \boldsymbol{w} \|^2_{\boldsymbol{L}^2(\Omega_h)},\\  \hspace{1mm} \text{and} \hspace{1mm} \| \boldsymbol{w} \|_{\boldsymbol{H}^m_{\text{tan}}(\Omega_h)}^2 & : = \sum_{i=0}^m \| (\nabla_{\Gamma_h})^i \boldsymbol{w} \|^2_{\boldsymbol{L}^2(\Omega_h)}.
\end{align*}
The broken semi-norm $| \cdot|_{\boldsymbol{H}^m(\Omega_h)}$ can be defined analogously. The norms and semi-norms defined on $\Gamma$ and $\Gamma_h$ are related through the following equivalence estimates:
\begin{lemma}[Norm equivalence results]
\label{lem5}
     Let $K \in \mathcal{T}_h$ be an element of $\Gamma_h$, and let $K^l \subset \Gamma$ denote its lift onto the exact surface. For any $\boldsymbol{w} \in \boldsymbol{H}^2(K^l)$ the following estimate holds:
    \begin{align}
    \label{eq22}
        \| \boldsymbol{w} \|_{\boldsymbol{L}^2(K^l)} & \leq C \| \boldsymbol{w}^e \|_{\boldsymbol{L}^2(K)} \leq C  \| \boldsymbol{w} \|_{\boldsymbol{L}^2(K^l)}, \\
        \label{eq23}
        |\boldsymbol{w} |_{\boldsymbol{H}^1(K^l)} & \leq C  |\boldsymbol{w}^e |_{\boldsymbol{H}^1(K)} \leq C  | \boldsymbol{w} |_{\boldsymbol{H}^1(K^l)}, \\
        \label{eq23a}
        | \boldsymbol{w} |_{\boldsymbol{H}^2(K^l)} & \leq C | \boldsymbol{w}^e |_{\boldsymbol{H}^2(K)},\\
        \label{eq24a}
         | \boldsymbol{w}^e |_{\boldsymbol{H}^2(K)} & \leq C  | \boldsymbol{w} |_{\boldsymbol{H}^2(K^l)}.
    \end{align}
\end{lemma}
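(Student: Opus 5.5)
The plan is to reduce all four estimates to the chain rule for the composition $\boldsymbol{w}^e = \boldsymbol{w}\circ p$ on each planar triangle $K$, together with the change of measure $ds\circ p = \mu_h\, ds_h$ from (\ref{m1}). For (\ref{eq22}) this is immediate. We have
\[
\|\boldsymbol{w}\|^2_{\boldsymbol{L}^2(K^l)} = \int_K |\boldsymbol{w}^e|^2 \mu_h\, ds_h .
\]
For $h$ small, (\ref{m1}) gives $\tfrac12 \le \mu_h \le 2$ on $K$. Hence both inequalities in (\ref{eq22}) hold with $C$ independent of $h$ and $K$.

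For the first-order estimate (\ref{eq23}), I will differentiate $p(\boldsymbol{x}) = \boldsymbol{x} - d(\boldsymbol{x})\boldsymbol{n}(p(\boldsymbol{x}))$ on $U_\delta$. This gives $\nabla p = \mathbf{P} - d\mathbf{H}$, with $\mathbf{H} = \nabla^2 d$ the (extended) Weingarten map. Restricting to $K$ and composing with the tangential projection $\mathbf{P}_h$ yields
\[
\nabla_{\Gamma_h}\boldsymbol{w}^e = (\nabla_\Gamma \boldsymbol{w})^e\,(\mathbf{P}-d\mathbf{H})\,\mathbf{P}_h .
\]
Two facts control the matrix $\mathbf{B}:=(\mathbf{P}-d\mathbf{H})\mathbf{P}_h$ as a map from the tangent plane of $K$ to that of $\Gamma$:
\begin{itemize}
\item $\|d\|_{L^\infty(K)}\le Ch^2$, since the vertices of $K$ lie on $\Gamma$ and $K$ interpolates $\Gamma$ linearly;
\item (\ref{eq9}) gives $\|\mathbf{P}^e-\mathbf{P}_h\|_{L^\infty(K)}\le Ch$.
\end{itemize}
Together these make $\mathbf{B}$ uniformly invertible for small $h$, with $\|\mathbf{B}\|,\|\mathbf{B}^{-1}\|\le C$. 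Then
\[
|\nabla_{\Gamma_h}\boldsymbol{w}^e| \le C|(\nabla_\Gamma\boldsymbol{w})^e| \quad\text{and}\quad |(\nabla_\Gamma\boldsymbol{w})^e| \le C|\nabla_{\Gamma_h}\boldsymbol{w}^e| .
\]
Combined with the measure equivalence from the previous step, this gives (\ref{eq23}). The semi-norms on $K$ and $K^l$ are defined through $\nabla\mathbf{P}_h$ and $\nabla\mathbf{P}$ respectively. Using the equivalence of $\boldsymbol{H}^m$ and $\boldsymbol{H}^m_{\tan}$ type quantities (Lemma 2.3 of \cite{HansboLarson2020}), we can work with the tangential gradients above.

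For the second-order bounds (\ref{eq23a})–(\ref{eq24a}) I will differentiate the identity above once more along $K$. This produces two kinds of terms. The principal term is $(\nabla^2_\Gamma \boldsymbol{w})^e$ contracted twice with $\mathbf{B}$. The lower-order terms involve $(\nabla_\Gamma\boldsymbol{w})^e$ multiplied by derivatives of $\mathbf{B}$, i.e.\ by $\nabla d\,\mathbf{H} = \boldsymbol{n}\mathbf{H}$ and $d\nabla\mathbf{H}$; these are bounded since $\Gamma$ is smooth and $\mathbf{P}_h$ is constant on $K$.
\begin{itemize}
\item For (\ref{eq24a}) this gives $|\boldsymbol{w}^e|_{\boldsymbol{H}^2(K)} \le C(|\boldsymbol{w}|_{\boldsymbol{H}^2(K^l)} + |\boldsymbol{w}|_{\boldsymbol{H}^1(K^l)})$.
\item For (\ref{eq23a}) I will invert the relation, expressing $(\nabla^2_\Gamma\boldsymbol{w})^e$ via $\mathbf{B}^{-1}$ in terms of $\nabla^2_{\Gamma_h}\boldsymbol{w}^e$ plus lower-order terms.
\end{itemize}

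The main obstacle is these lower-order first-derivative contributions: the statement as written has pure $H^2$ semi-norms on both sides. I expect they must either be absorbed or tolerated as a full-norm bound. The structural point to try first is that on a flat triangle the piece $\nabla(\mathbf{P}-d\mathbf{H})$ restricted to tangential directions of $K$ is $O(h)$, because $\boldsymbol{n}\cdot\mathbf{P}_h = O(h)$ by (\ref{9a}). The lower-order term would then carry a factor $h$ and be harmless in later use. Failing a clean absorption, I would state the estimate with $\|\boldsymbol{w}\|_{\boldsymbol{H}^2(K^l)}$ on the right-hand side, as in \cite{larsson2017}, since that is how it is used in the interpolation and error estimates.
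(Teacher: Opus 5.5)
The paper proves nothing directly here. It cites \cite{HansboLarson2020} for (\ref{eq22})--(\ref{eq23}) and Appendix B of \cite{larsson2017} (scalar case, applied componentwise) for (\ref{eq23a})--(\ref{eq24a}). Your direct route for (\ref{eq22})--(\ref{eq23}) is the standard mechanism behind those references and is correct up to two corrections. That route is the measure factor $\mu_h$ together with the Jacobian $\mathbf{B}=(\mathbf{P}-d\mathbf{H})\mathbf{P}_h$, which is uniformly invertible from the tangent plane of $K$ onto that of $\Gamma$ because $d=O(h^2)$ and $\mathbf{P}^e-\mathbf{P}_h=O(h)$.

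First, your identity holds for the one-sided componentwise gradients $\nabla\boldsymbol{w}^e\mathbf{P}_h$ and $(\nabla\boldsymbol{w}^e\mathbf{P})\circ p$. This is exactly what the semi-norms $|\cdot|_{\boldsymbol{H}^1}$ measure. It does not hold for the paper's two-sided operators $\nabla_{\Gamma_h}=\mathbf{P}_h\nabla(\cdot)\mathbf{P}_h$ and $\nabla_\Gamma=\mathbf{P}\nabla(\cdot)\mathbf{P}$, so state it in the one-sided form. Second, drop the appeal to Lemma 2.3 of \cite{HansboLarson2020}. That lemma concerns full norms of tangential fields, whereas here $\boldsymbol{w}\in\boldsymbol{H}^2(K^l)$ is arbitrary and $\boldsymbol{w}^e$ is not tangential to $K$. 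With one-sided gradients it is not needed.

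For the second-order bounds, your proposal does not prove (\ref{eq23a})--(\ref{eq24a}) as stated. The obstruction you found is real, and for (\ref{eq23a}) it defeats the statement itself, so no absorption trick can succeed.

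Your $O(h)$ remark needs sharpening. The tensor $\nabla(\mathbf{P}-d\mathbf{H})$ in directions tangent to $K$ is $O(1)$, since it contains $\boldsymbol{n}\otimes\mathbf{H}\mathbf{P}_h$. Smallness appears only after two contractions: with $(\nabla\boldsymbol{w}^e\mathbf{P})\circ p$ on the left, which annihilates $\boldsymbol{n}$, and with $\mathbf{P}_h$ on the right, which produces $\mathbf{P}_h\boldsymbol{n}=O(h)$. Done that way, it gives $|\boldsymbol{w}^e|_{\boldsymbol{H}^2(K)}\le C(|\boldsymbol{w}|_{\boldsymbol{H}^2(K^l)}+h|\boldsymbol{w}|_{\boldsymbol{H}^1(K^l)})$. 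That is still not the pure form of (\ref{eq24a}).

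In the reverse direction the lower-order term has an $O(1)$ coefficient. For a tangential field $\boldsymbol{g}=\nabla_\Gamma w$, differentiating $\boldsymbol{n}^e\cdot\boldsymbol{g}^e\equiv 0$ gives $\boldsymbol{n}^T\nabla\boldsymbol{g}^e\mathbf{P}=-\boldsymbol{g}^T\mathbf{H}$ on $\Gamma$. Hence $|(\nabla\mathbf{P})^2w^e|\ge|\mathbf{H}\nabla_\Gamma w|$, which equals $|\nabla_\Gamma w|$ on the unit sphere. Now take $\Gamma$ to be the unit sphere, and let $\boldsymbol{w}$ be the lift of $\phi\,\mathbf{e}_1$ with $\phi$ affine and non-constant on $K$. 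Then $\boldsymbol{w}^e=\phi\,\mathbf{e}_1$ on $K$, so $|\boldsymbol{w}^e|_{\boldsymbol{H}^2(K)}=0$. Yet $|\boldsymbol{w}|_{\boldsymbol{H}^2(K^l)}\ge\|\nabla_\Gamma\phi^l\|_{L^2(K^l)}>0$. Thus (\ref{eq23a}) can only hold in the form $|\boldsymbol{w}|_{\boldsymbol{H}^2(K^l)}\le C(|\boldsymbol{w}^e|_{\boldsymbol{H}^2(K)}+|\boldsymbol{w}^e|_{\boldsymbol{H}^1(K)})$.

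Your fallback to full norms is therefore the right repair. For (\ref{eq23a}) the full norm belongs on $K$, i.e.\ $\|\boldsymbol{w}^e\|_{\boldsymbol{H}^2(K)}$, not on $K^l$. The later error analysis only uses the full-norm extension bound $\|\boldsymbol{u}^e\|_{\boldsymbol{H}^2(\Gamma_h)}\le C\|\boldsymbol{u}\|_{\boldsymbol{H}^2(\Gamma)}$. The paper's citation-based proof passes over this distinction.
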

\begin{proof}
 Hansbo et. al. established (\ref{eq22})-(\ref{eq23}) for vector-valued functions in \cite{HansboLarson2020} and the inequalities (\ref{eq23a})-(\ref{eq24a}) are proven in  Appendix B of \cite{larsson2017} for scalar-valued functions, which can be extended for vector-valued set up component-wise.
\end{proof}

Furthermore, we present the operator difference estimates and geometric estimates, which play a key role in the subsequent error analysis.
\begin{lemma}[Operator differences]\label{lemma:OpDiff}
For $ \boldsymbol{v} \in \boldsymbol{H}^1_{\tn}(\Gamma_h)$, we have 
\begin{equation}
\label{I8}
\| (\nabla_\Gamma \boldsymbol{v}^l)^e-\nabla_{\Gamma_h}\boldsymbol{v} \|_{\boldsymbol{L}^2(\Gamma_h)}\leq C h \|\boldsymbol{v} \|_{\boldsymbol{H}^1(\Gamma_h)}.
\end{equation}
For $\boldsymbol{w} \in \boldsymbol{H}^1_{\tn} (\Gamma)$, it holds that
\begin{equation}
    \label{8a}
    \| (\nabla_\Gamma \boldsymbol{w})^e-\nabla_{\Gamma_h}\boldsymbol{w}^e \|_{\boldsymbol{L}^2(\Gamma_h)}\leq C h \|\boldsymbol{w}^e \|_{\boldsymbol{H}^1(\Gamma_h)}.
\end{equation}
\end{lemma}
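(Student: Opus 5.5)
The plan is a pointwise computation of the chain rule for the lift/extension on each planar element $K$, followed by integration over $\Gamma_h$. I treat (\ref{8a}) first and derive (\ref{I8}) from it.

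\textbf{Chain rule for (\ref{8a}).} Let $\boldsymbol{w}\in\boldsymbol{H}^1_{\tn}(\Gamma)$. On $K$ the tangential derivative of the extension is $\nabla_{\Gamma_h}\boldsymbol{w}^e=\mathbf{P}_h\nabla\boldsymbol{w}^e\mathbf{P}_h$, using the convention of the paper for the covariant derivative. Differentiating $\boldsymbol{w}^e=\boldsymbol{w}\circ p$ with $\nabla p=\mathbf{P}^e-d\,\mathbf{H}^e$, where $\mathbf{H}=\nabla\boldsymbol{n}$ is the Weingarten map, gives
\[
\nabla\boldsymbol{w}^e=(\nabla_\Gamma\boldsymbol{w})^e(\mathbf{P}^e-d\mathbf{H}^e).
\]
Since $\nabla_\Gamma\boldsymbol{w}=\mathbf{P}\nabla\boldsymbol{w}^e\mathbf{P}$, it follows that
\[
\nabla_{\Gamma_h}\boldsymbol{w}^e=\mathbf{P}_h(\nabla_\Gamma\boldsymbol{w})^e(\mathbf{I}-d\mathbf{H}^e)\mathbf{P}_h .
\]
We then split the difference as
\[
(\nabla_\Gamma\boldsymbol{w})^e-\nabla_{\Gamma_h}\boldsymbol{w}^e=(\mathbf{P}^e-\mathbf{P}_h)(\nabla_\Gamma\boldsymbol{w})^e\mathbf{P}^e+\mathbf{P}_h(\nabla_\Gamma\boldsymbol{w})^e(\mathbf{P}^e-\mathbf{P}_h)+\mathbf{P}_h(\nabla_\Gamma\boldsymbol{w})^e d\mathbf{H}^e\mathbf{P}_h .
\]
Here we use $(\nabla_\Gamma\boldsymbol{w})^e=\mathbf{P}^e(\nabla_\Gamma\boldsymbol{w})^e\mathbf{P}^e$.

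\textbf{Bounding the three terms.} The first two terms are bounded by (\ref{eq9}), which gives $\|\mathbf{P}^e-\mathbf{P}_h\|_{L^\infty}\le Ch$. For the third we use $\|d\|_{L^\infty(\Gamma_h)}\le Ch^2$, a standard fact for interpolating polyhedra with vertices on $\Gamma$, together with boundedness of $\mathbf{H}$ on the smooth surface. Altogether the difference is bounded pointwise by $Ch|(\nabla_\Gamma\boldsymbol{w})^e|$. Integrating over each $K$ and applying the norm equivalence (\ref{eq22}) and (\ref{eq23}) of Lemma \ref{lem5}, we get $\|(\nabla_\Gamma\boldsymbol{w})^e\|_{L^2(K)}\le C|\boldsymbol{w}|_{\boldsymbol{H}^1(K^l)}\le C|\boldsymbol{w}^e|_{\boldsymbol{H}^1(K)}$. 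Summing over the elements gives (\ref{8a}).

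\textbf{Deriving (\ref{I8}).} Let $\boldsymbol{v}\in\boldsymbol{H}^1_{\tn}(\Gamma_h)$, understood elementwise. Set $\boldsymbol{w}=\boldsymbol{v}^l$; then $\boldsymbol{w}^e=\boldsymbol{v}$ on $\Gamma_h$, since $p$ restricted to $\Gamma_h$ is a bijection onto $\Gamma$. Formally, (\ref{I8}) then follows by applying (\ref{8a}) elementwise: the argument above is purely local on each $K$ and never uses global $H^1$-conformity. The one subtlety is that $\boldsymbol{v}^l$ is only tangential to $\Gamma_h$, not to $\Gamma$. This does not matter, because the identity for $\nabla_{\Gamma_h}\boldsymbol{w}^e$ holds for any vector field, with $\nabla_\Gamma$ defined as $\mathbf{P}\nabla(\cdot)^e\mathbf{P}$. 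The only place tangentiality entered was the identity $(\nabla_\Gamma\boldsymbol{w})^e=\mathbf{P}^e(\cdot)\mathbf{P}^e$, and that is automatic from the definition of $\nabla_\Gamma$ (it projects on both sides). The final norm bound follows from $|\boldsymbol{v}^l|_{\boldsymbol{H}^1(K^l)}\le C|\boldsymbol{v}|_{\boldsymbol{H}^1(K)}$ in (\ref{eq23}). The left-hand sides should be understood as broken norms.

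\textbf{Main obstacle.} I expect the hardest step to be making the chain-rule identity rigorous with the paper's definitions, in particular the bound $\nabla p=\mathbf{P}-d\mathbf{H}$ with $|d|\le Ch^2$ on $\Gamma_h$. If $\mathbf{H}^e$ has to be handled as $\mathbf{H}(\mathbf{I}-d\mathbf{H})^{-1}$, I would check that this is uniformly bounded for $h$ small, which holds because $\delta$ is smaller than the reciprocal of the curvature bound. It is the $\mathbf{P}^e-\mathbf{P}_h$ terms, of order $h$ rather than $h^2$, that limit the estimate to first order.
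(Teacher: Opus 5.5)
Your route differs from the paper's. The paper does no computation: it takes (\ref{I8}) directly from Lemma 5.4 of Hansbo--Larson--Larsson (with $k_g=1$ and $\boldsymbol v_{n_h}=0$). For (\ref{8a}) it splits $\boldsymbol w^e=\mathbf P_h\boldsymbol w^e+(\mathbf P^e-\mathbf P_h)\boldsymbol w^e$ and absorbs the $h^{-1}\|\boldsymbol w^e_{n_h}\|$ term of that lemma using (\ref{eq9}). A self-contained chain-rule proof, with (\ref{8a}) first, is a fine alternative. As written, however, your key identity is false. Differentiating $\boldsymbol w^e=\boldsymbol w^e\circ p$ gives
\[
\nabla\boldsymbol w^e=\mathbf G\,(\mathbf P^e-d\mathbf H^e),\qquad \mathbf G:=(\nabla\boldsymbol w^e\,\mathbf P)^e .
\]
The factor is the full tangential derivative $\mathbf G$, not $(\nabla_\Gamma\boldsymbol w)^e=\mathbf P^e\mathbf G\mathbf P^e$. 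The two differ by the normal row $\boldsymbol n^e(\boldsymbol n^e)^T\mathbf G$, which does not vanish for tangential fields: differentiating $\boldsymbol n\cdot\boldsymbol w^e=0$ gives $\boldsymbol n^T\nabla\boldsymbol w^e\mathbf P=-\boldsymbol w^T\mathbf H$.

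Hence $\nabla_{\Gamma_h}\boldsymbol w^e$ contains the extra term $\mathbf P_h\boldsymbol n^e(\boldsymbol n^e)^T\mathbf G(\mathbf P^e-d\mathbf H^e)\mathbf P_h$, which your formula drops. Since $\mathbf P_h\boldsymbol n^e=(\mathbf P_h-\mathbf P^e)\boldsymbol n^e$, this term is of order $h|\boldsymbol w^e|$, and in general it does not vanish where $\nabla_\Gamma\boldsymbol w=0\neq\boldsymbol w$. So your pointwise bound $Ch|(\nabla_\Gamma\boldsymbol w)^e|$ is false. For the same reason, your diagnosis for (\ref{I8}) is off: tangentiality does not enter through $(\nabla_\Gamma\boldsymbol w)^e=\mathbf P^e(\cdot)\mathbf P^e$. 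The dropped normal row is nonzero for tangential and non-tangential fields alike, and for the lift $\boldsymbol v^l$ it additionally contains $\nabla_\Gamma(\boldsymbol v^l\cdot\boldsymbol n)$.

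The repair is easy and keeps your structure. Since $\mathbf G=\mathbf G\mathbf P^e$ and $(\nabla_\Gamma\boldsymbol w)^e=\mathbf P^e\mathbf G\mathbf P^e$, one has exactly
\[
(\nabla_\Gamma\boldsymbol w)^e-\nabla_{\Gamma_h}\boldsymbol w^e=(\mathbf P^e-\mathbf P_h)\mathbf G\mathbf P^e+\mathbf P_h\mathbf G(\mathbf P^e-\mathbf P_h)+d\,\mathbf P_h\mathbf G\mathbf H^e\mathbf P_h .
\]
By (\ref{eq9}) and $|d|\le Ch^2$, the difference is bounded pointwise by $Ch|\mathbf G|$. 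Moreover, $\|\mathbf G\|_{\boldsymbol L^2(K)}\le C|\boldsymbol w|_{\boldsymbol H^1(K^l)}$, because the componentwise seminorm of Lemma \ref{lem5} is precisely the full tangential gradient, normal row included. Then (\ref{eq23}) finishes (\ref{8a}). This version uses no tangentiality at all, so your reduction $\boldsymbol w=\boldsymbol v^l$ for (\ref{I8}) becomes legitimate. For tangential $\boldsymbol w$ one may equivalently write $|\mathbf G|\le|(\nabla_\Gamma\boldsymbol w)^e|+C|\boldsymbol w^e|$; this shows why the right-hand side of (\ref{8a}) must contain more than $\|\nabla_\Gamma\boldsymbol w\|$.

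Compared with the paper, the corrected argument is elementary and treats both estimates with one identity. It also shows that the $O(h)$ comes solely from $\mathbf P^e-\mathbf P_h$, while the $d=O(h^2)$ term is higher order. The paper's proof is shorter but rests entirely on the cited external lemma.
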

\begin{proof}
The proof of the result (\ref{I8}) follows from Lemma 5.4 in \cite{HansboLarson2020} with $k_g=1$ using $\boldsymbol{v}_{n_h}=0$ for $\boldsymbol{v} \in \boldsymbol{H}^1_{\tn}(\Gamma_h)$. \vspace{2mm}\\ 
Derivation of (\ref{8a}) proceeds as follows: for $\boldsymbol{w} \in \boldsymbol{H}^1_{\tn} (\Gamma)$, we have $\boldsymbol{w}^e \in \boldsymbol{H}^1 (\Gamma_h)$ and $\boldsymbol{w}^e= \boldsymbol{w}^e_{t_h} +\boldsymbol{w}^e_{n_h},$ where the tangential and normal parts are, respectively, $\boldsymbol{w}^e_{t_h} = \textbf{P}_h \boldsymbol{w}^e$ and $\boldsymbol{w}^e_{n_h} = (\textbf{I}-\textbf{P}_h) \boldsymbol{w}^e = (\textbf{P}+ \boldsymbol{n} \boldsymbol{n}^T-\textbf{P}_h) \boldsymbol{w}^e= (\textbf{P}-\textbf{P}_h) \boldsymbol{w}^e$. \vspace{1mm}\\
By Lemma 5.4 of \cite{HansboLarson2020} and later using the geometric approximation property (\ref{eq9}), we obtain
\begin{align*}
     \| (\nabla_\Gamma \boldsymbol{w})^e-\nabla_{\Gamma_h}\boldsymbol{w}^e \|_{\boldsymbol{L}^2(\Gamma_h)} & \leq C h \Big(  \|\boldsymbol{w}^e \|_{\boldsymbol{H}^1(\Gamma_h)} + h^{-1} \| \boldsymbol{w}^e_{n_h} \|_{\boldsymbol{L}^2(\Gamma_h)} \Big) \\
     & \leq C h \Big(  \|\boldsymbol{w}^e \|_{\boldsymbol{H}^1(\Gamma_h)} + h^{-1} \| \mathbf{P}^e- \mathbf{P}_h \|_{L^{\infty}(\Gamma_h)} \| \boldsymbol{w}^e \|_{\boldsymbol{L}^2(\Gamma_h)} \Big) \\
     & \leq Ch \|\boldsymbol{w}^e \|_{\boldsymbol{H}^1(\Gamma_h)}.
\end{align*}

\end{proof}

\begin{lemma}[Geometric $H^1$-estimate] \label{G1}
For $\boldsymbol{u}, \boldsymbol{v} \in \boldsymbol{H}^2_{\tn}(\Gamma)$, the following result holds:
\begin{equation}
    \label{I8a}
    | (\nabla_{\Gamma_h} \boldsymbol{u}^e, \nabla_{\Gamma_h} \boldsymbol{v}^e)_{\Gamma_h}- (\nabla_\Gamma \boldsymbol{u},\nabla_\Gamma \boldsymbol{v})_\Gamma| \leq C h^2  \| \boldsymbol{u} \|_{\boldsymbol{H}^2_{\tn}(\Gamma)} \| \boldsymbol{v} \|_{\boldsymbol{H}^2_{\tn}(\Gamma)}.
\end{equation}
\end{lemma}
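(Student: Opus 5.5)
We prove (\ref{I8a}) by transforming the discrete integral to $\Gamma$ via $ds\circ p=\mu_h ds_h$ and splitting off the geometric error. First write
\begin{align*}
(\nabla_{\Gamma_h} \boldsymbol{u}^e, \nabla_{\Gamma_h} \boldsymbol{v}^e)_{\Gamma_h}- (\nabla_\Gamma \boldsymbol{u},\nabla_\Gamma \boldsymbol{v})_\Gamma
= {} & \big(\nabla_{\Gamma_h} \boldsymbol{u}^e, \nabla_{\Gamma_h} \boldsymbol{v}^e\big)_{\Gamma_h} - \big((\nabla_\Gamma \boldsymbol{u})^e,(\nabla_\Gamma \boldsymbol{v})^e\big)_{\Gamma_h} \\
& + \big((\nabla_\Gamma \boldsymbol{u})^e,(\nabla_\Gamma \boldsymbol{v})^e (1-\mu_h)\big)_{\Gamma_h} =: I + II.
\end{align*}
Term $II$ is bounded directly by (\ref{m1}) and the norm equivalence (\ref{eq22}): $|II|\le Ch^2\|\boldsymbol{u}\|_{\boldsymbol{H}^1(\Gamma)}\|\boldsymbol{v}\|_{\boldsymbol{H}^1(\Gamma)}$.

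For $I$, use the exact representation of the tangential derivative of an extension. Since $\boldsymbol{u}^e$ is constant in the normal direction, $\nabla \boldsymbol{u}^e = (\nabla_\Gamma \boldsymbol{u})^e(\mathbf{P}-d\mathcal{H})$ on $\Gamma_h$, where $\mathcal{H}=\nabla^2 d$ is the Weingarten map and $\|d\|_{L^\infty(\Gamma_h)}\le Ch^2$. Thus
\[
\nabla_{\Gamma_h}\boldsymbol{u}^e=\mathbf{P}_h\nabla\boldsymbol{u}^e\mathbf{P}_h=\mathbf{P}_h (\nabla_\Gamma \boldsymbol{u})^e \mathbf{P}\mathbf{P}_h + O(h^2)|(\nabla_\Gamma\boldsymbol{u})^e|.
\]
Inserting this into $I$ and using $(\nabla_\Gamma\boldsymbol{u})^e=\mathbf{P}(\nabla_\Gamma\boldsymbol{u})^e\mathbf{P}$, the leading part of $I$ becomes
\[
\int_{\Gamma_h}\Big[ \big(\mathbf{P}_h A\, \mathbf{P}_h\big): \big(\mathbf{P}_h B\, \mathbf{P}_h\big) - A:B\Big]\,ds_h ,\qquad A=(\nabla_\Gamma \boldsymbol{u})^e,\ B=(\nabla_\Gamma \boldsymbol{v})^e .
\]
Expanding $\mathbf{P}_h = \mathbf{P} + (\mathbf{P}_h-\mathbf{P})$, the quadratic terms in $\mathbf{P}_h-\mathbf{P}$ are $O(h^2)$ by (\ref{eq9}). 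The linear terms are $O(h)$ pointwise, so this is where the work lies.

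The main obstacle is to show that these first-order terms are really $O(h^2)$. A short computation gives $\mathbf{P}\mathbf{P}_h\mathbf{P}=\mathbf{P}-\mathbf{P}\boldsymbol{n}_h\boldsymbol{n}_h^T\mathbf{P}$, and $|\mathbf{P}\boldsymbol{n}_h|\le Ch$ by (\ref{9a}). Hence, whenever $\mathbf{P}_h-\mathbf{P}$ is sandwiched between $\mathbf{P}$'s, as it is here because $A=\mathbf{P}A\mathbf{P}$, the linear contribution is quadratic in $\mathbf{P}\boldsymbol{n}_h$ and therefore $O(h^2)$ pointwise. Concretely, the first-order contributions have the form $A:(\mathbf{P}(\mathbf{P}_h-\mathbf{P})\mathbf{P})B$-type products, and $\mathbf{P}(\mathbf{P}_h-\mathbf{P})\mathbf{P}=-(\mathbf{P}\boldsymbol{n}_h)(\mathbf{P}\boldsymbol{n}_h)^T$ is $O(h^2)$.

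If some linear term resists this algebra, the fallback is to write $\mathbf{P}\boldsymbol{n}_h$ elementwise as a gradient of the distance to the chord plane and integrate by parts on each $K$. In that case the $\boldsymbol{H}^2$ norms of $\boldsymbol{u}$ and $\boldsymbol{v}$ are used, together with (\ref{eq23a})--(\ref{eq24a}) and the edge estimates (\ref{m2}) and (\ref{eq11}) for the resulting boundary terms; this is why the statement assumes $\boldsymbol{H}^2$ regularity.

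Collecting the bounds on $I$ and $II$, and converting norms on $\Gamma_h$ back to $\Gamma$ by Lemma \ref{lem5} and the norm equivalence for tangential fields, yields (\ref{I8a}).
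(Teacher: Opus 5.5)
Your sandwich computation for the covariant part is correct. With $Q=\mathbf{P}\mathbf{P}_h\mathbf{P}=\mathbf{P}-(\mathbf{P}\boldsymbol{n}_h)(\mathbf{P}\boldsymbol{n}_h)^T$ one has exactly $(\mathbf{P}_hA\mathbf{P}_h):(\mathbf{P}_hB\mathbf{P}_h)=\mathrm{tr}(A^TQBQ)=A:B+O(h^2)|A||B|$.

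The gap is the identity you start from. For the paper's covariant derivative $\nabla_\Gamma\boldsymbol{u}=\mathbf{P}\nabla\boldsymbol{u}^e\mathbf{P}$, the chain rule gives $\nabla\boldsymbol{u}^e=(\nabla\boldsymbol{u}^e\mathbf{P})^e(\mathbf{P}-d\mathcal{H})$. The full tangential derivative of a tangential field has a normal part: differentiating $\boldsymbol{n}\cdot\boldsymbol{u}^e=0$ in $U_\delta$ gives $\boldsymbol{n}^T\nabla\boldsymbol{u}^e=-(\mathcal{H}\boldsymbol{u}^e)^T$. Hence $\nabla\boldsymbol{u}^e\mathbf{P}=\nabla_\Gamma\boldsymbol{u}-\boldsymbol{n}(\mathcal{H}\boldsymbol{u})^T$ on $\Gamma$, so your two assertions $\nabla\boldsymbol{u}^e=(\nabla_\Gamma\boldsymbol{u})^e(\mathbf{P}-d\mathcal{H})$ and $(\nabla_\Gamma\boldsymbol{u})^e=\mathbf{P}(\nabla_\Gamma\boldsymbol{u})^e\mathbf{P}$ cannot both hold.

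As a result, $\nabla_{\Gamma_h}\boldsymbol{u}^e$ carries an extra piece $-(\mathbf{P}_h\boldsymbol{n})\big(\mathbf{P}_h(\mathcal{H}\boldsymbol{u})^e\big)^T$ of size $O(h)|\boldsymbol{u}^e|$. Since $\mathbf{P}\mathbf{P}_h\boldsymbol{n}=-(\boldsymbol{n}\cdot\boldsymbol{n}_h)\mathbf{P}\boldsymbol{n}_h$, its pairing with $\mathbf{P}_hB\mathbf{P}_h$ equals $(\boldsymbol{n}\cdot\boldsymbol{n}_h)(\mathbf{P}\boldsymbol{n}_h)\cdot B\mathcal{H}\boldsymbol{u}+O(h^3)$. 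Only one factor $\mathbf{P}\boldsymbol{n}_h$ occurs, so nothing is sandwiched. The integrand of $I$ therefore contains the genuinely first-order term
\[
(\boldsymbol{n}\cdot\boldsymbol{n}_h)\,(\mathbf{P}\boldsymbol{n}_h)\cdot\big[(\nabla_\Gamma\boldsymbol{u})\mathcal{H}\boldsymbol{v}+(\nabla_\Gamma\boldsymbol{v})\mathcal{H}\boldsymbol{u}\big]^e ,
\]
which is nonzero in general. For example, on the unit sphere $\mathcal{H}=\mathbf{P}$ and the bracket is $(\nabla_\Gamma\boldsymbol{u})\boldsymbol{v}+(\nabla_\Gamma\boldsymbol{v})\boldsymbol{u}$. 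Your main line therefore proves only $|I|\le Ch\|\boldsymbol{u}\|_{\boldsymbol{H}^1(\Gamma)}\|\boldsymbol{v}\|_{\boldsymbol{H}^1(\Gamma)}$.

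So your fallback is not optional: it is the core of the proof and exactly where $\boldsymbol{H}^2$ regularity enters, and it has to be carried out. Your idea is the right one. On $K^l$, $\mathbf{P}\boldsymbol{n}_h=\nabla_\Gamma\rho_K$ with $\rho_K(\boldsymbol{y})=(\boldsymbol{y}-\boldsymbol{x}_K)\cdot\boldsymbol{n}_h$, where $\boldsymbol{x}_K$ is a vertex of $K$. Moreover $\rho_K\circ p=-d\,(\boldsymbol{n}\cdot\boldsymbol{n}_h)$ on $K$, so $|\rho_K|\le Ch^2$. Integrating by parts on each $K^l$ (the bracket is tangential) gives volume terms bounded by $Ch^2\|\boldsymbol{u}\|_{\boldsymbol{H}^2_{\tn}(\Gamma)}\|\boldsymbol{v}\|_{\boldsymbol{H}^2_{\tn}(\Gamma)}$.

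The edge terms are not handled by (\ref{m2}) and (\ref{eq11}) as you suggest. Bounding $|\rho_K|\le Ch^2$ on each edge and applying a trace inequality only yields $O(h)$. What closes them is the cancellation $\rho_{K^+}-\rho_{K^-}=-d\,\boldsymbol{n}\cdot(\boldsymbol{n}_h^+-\boldsymbol{n}_h^-)=O(h^4)$ on $E^l$, which holds because $\boldsymbol{n}\cdot\boldsymbol{n}_h^{\pm}=1-O(h^2)$. This is combined with $\boldsymbol{n}_{E^l}^+=-\boldsymbol{n}_{E^l}^-$ on the exact surface.

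With these two additions your argument becomes a complete, self-contained proof. The paper gives no direct argument; it simply invokes estimate (5.33) of Lemma 5.5 in Hansbo--Larson--Larsson (2020) with $k_g=1$.
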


\begin{proof}
 \textcolor{black}{   The estimate (\ref{I8a}) can be proven by applying the estimate (5.33) of Lemma 5.5 in \cite{HansboLarson2020} with $k_g=1$.}
\end{proof}

\begin{lemma}[Geometric error-estimate for the divergence operator]\label{lemdiv} For $(\boldsymbol{v},q) \in \boldsymbol{H}^2_{\tn}(\Gamma) \times H^1(\Gamma)$, we have
\begin{equation}
    \label{I9}
   \left| (q, \textnormal{div}_{\Gamma} \boldsymbol{v}- (\textnormal{div}_{\Gamma_h} \boldsymbol{v}^e)^l)_{\Gamma} \right| \leq C h^2 \| \boldsymbol{v} \|_{\boldsymbol{H}^2_{\tn}(\Gamma)} \| q \|_{H^1(\Gamma)}.
\end{equation}
    
\end{lemma}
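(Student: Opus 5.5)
The estimate can be reduced to the geometric $H^1$-estimate of Lemma \ref{G1} by writing the pressure coupling as a (transposed) gradient pairing. Only a first-order geometric error is available pointwise: by Lemma \ref{lemma:OpDiff} and (\ref{eq9}) it is $O(h)$. Extra cancellation is therefore needed, and I would get it from integration by parts against the smooth pressure $q\in H^1(\Gamma)$.

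\textbf{Step 1: express both divergences through traces.} We have $\text{div}_\Gamma \boldsymbol{v} = \text{tr}(\nabla_\Gamma \boldsymbol{v}) = \mathbf{P}:\nabla_\Gamma\boldsymbol{v}$ and $\text{div}_{\Gamma_h}\boldsymbol{v}^e = \mathbf{P}_h:\nabla_{\Gamma_h}\boldsymbol{v}^e$. Transfer the second term to $\Gamma_h$ using $ds\circ p=\mu_h\, ds_h$. This gives
\[
(q,(\text{div}_{\Gamma_h}\boldsymbol{v}^e)^l)_\Gamma=(q^e\mu_h,\text{div}_{\Gamma_h}\boldsymbol{v}^e)_{\Gamma_h}.
\]
By (\ref{m1}), replacing $\mu_h$ by $1$ costs at most $Ch^2\|q\|_{L^2(\Gamma)}\|\boldsymbol{v}\|_{\boldsymbol{H}^1(\Gamma)}$, using Lemma \ref{lem5}. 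It therefore remains to bound
\[
(q,\text{div}_\Gamma\boldsymbol{v})_\Gamma-(q^e,\text{div}_{\Gamma_h}\boldsymbol{v}^e)_{\Gamma_h}.
\]

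\textbf{Step 2: integrate by parts on each surface.}
\begin{itemize}
\item On the closed surface $\Gamma$, since $\boldsymbol{v}$ is tangential, $(q,\text{div}_\Gamma \boldsymbol{v})_\Gamma=-(\nabla_\Gamma q,\boldsymbol{v})_\Gamma$.
\item On $\Gamma_h$, integrate elementwise. The vector $\boldsymbol{v}^e$ is not tangential to $K$, so I would split it as $\boldsymbol{v}^e=\mathbf{P}_h\boldsymbol{v}^e+(\mathbf{P}-\mathbf{P}_h)\boldsymbol{v}^e$, as in the proof of Lemma \ref{lemma:OpDiff}. Note that $\text{div}_{\Gamma_h}$ of the normal part is $\text{tr}(\mathbf{P}_h\nabla(\ldots)\mathbf{P}_h)$. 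For the normal part, the product rule combined with $\mathbf{P}_h\boldsymbol{n}_h=0$ shows that only terms of the form $\nabla_{\Gamma_h}$ acting on $\mathbf{P}^e$ survive. These involve the curvature of $\Gamma$ multiplied by $(\boldsymbol{n}_h\cdot\boldsymbol{v}^e)=O(h)\,|\boldsymbol{v}|$. I would then integrate this remainder by parts once more, or use the sharper $O(h^2)$ cancellation from (\ref{eq11}).
\item For the tangential part, integration by parts on each $K$ gives $-(\nabla_{\Gamma_h}q^e,\boldsymbol{v}^e)_{\Gamma_h}$ plus edge terms $\sum_E\int_E q^e(\boldsymbol{v}^e\cdot\boldsymbol{n}_E^++\boldsymbol{v}^e\cdot\boldsymbol{n}_E^-)$. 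Since $\boldsymbol{v}^e$ and $q^e$ are continuous, (\ref{eq11}) and $\boldsymbol{n}_{E^l}^+=-\boldsymbol{n}_{E^l}^-$ give $\mathbf{P}^e(\boldsymbol{n}_E^++\boldsymbol{n}_E^-)=O(h^2)$, and $\boldsymbol{v}^e=\mathbf{P}^e\boldsymbol{v}^e$. Each edge term is therefore $O(h^2)$ pointwise. Summing over edges with the trace inequality, so that $\sum_E\|\cdot\|_{L^1(E)}\lesssim h^{-1}\|\cdot\|_{L^1(\Gamma_h)}$, loses one power of $h$. To avoid that loss I would use the finer $O(h^2)$ bound from (\ref{eq11}) combined with the global trace bound $\sum_E h\|w\|^2_{L^2(E)}\lesssim \|w\|^2+h^2|w|_{H^1}^2$. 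If this still leaves only $O(h)$, I would instead pair the edge contributions of the two sides directly via the lift and use $\boldsymbol{n}_{E^l}^+=-\boldsymbol{n}_{E^l}^-$ to cancel them exactly up to $\mu_h^E$ terms, which are $O(h^2)$ by (\ref{m2}).
\end{itemize}

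\textbf{Step 3: compare the gradient pairings.} It remains to bound
\[
(\nabla_{\Gamma_h}q^e,\boldsymbol{v}^e)_{\Gamma_h}-(\nabla_\Gamma q,\boldsymbol{v})_\Gamma .
\]
Use the identity $\nabla_{\Gamma_h}q^e=\mathbf{P}_h(\mathbf{I}-d\mathbf{H})\nabla_\Gamma q$ and the fact that $\boldsymbol{v}^e$ is tangential to $\Gamma$. Then
\[
\boldsymbol{v}^e\cdot\mathbf{P}_h\nabla_\Gamma q-\boldsymbol{v}^e\cdot\nabla_\Gamma q = -\boldsymbol{v}^e\cdot(\mathbf{P}-\mathbf{P}_h)\nabla_\Gamma q .
\]
Because both vectors are tangential to $\Gamma$, this equals $-(\boldsymbol{v}^e\cdot\boldsymbol{n}_h)(\boldsymbol{n}_h\cdot\nabla_\Gamma q)$, and each factor is $O(h)$ by (\ref{9a}), so the product is $O(h^2)$. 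The term involving $d$ is $O(h^2)$ since $\|d\|_{L^\infty}\le Ch^2$, and $\mu_h$ contributes $O(h^2)$ by (\ref{m1}). Altogether this gives the bound $Ch^2\|\boldsymbol{v}\|_{\boldsymbol{H}^1}\|q\|_{H^1}$, which is stronger than claimed.

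\textbf{Main obstacle.} The hardest part will be the elementwise integration by parts on $\Gamma_h$: controlling both the non-tangential part of $\boldsymbol{v}^e$ and the edge terms at order $h^2$, without losing a power of $h$ to trace inequalities. The $\boldsymbol{H}^2$ regularity of $\boldsymbol{v}$ assumed in the statement is available for this step if it is needed.
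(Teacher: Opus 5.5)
\textbf{There is a genuine gap: your argument only reaches $O(h)$.}

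Your Steps 1 and 3 are sound. Step 3 correctly reduces the gradient pairing to $(\boldsymbol{v}^e\cdot\boldsymbol{n}_h)(\boldsymbol{n}_h\cdot\nabla_\Gamma q)=O(h^2)$. On a flat element the normal part $\boldsymbol{n}_h(\boldsymbol{n}_h\cdot\boldsymbol{v}^e)$ has identically vanishing $\textnormal{div}_{\Gamma_h}$, because $\mathbf{P}_h\boldsymbol{n}_h=0$ and $\boldsymbol{n}_h$ is constant on $K$. So no curvature remainder arises there; your description of a surviving curvature term is inaccurate but harmless.

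What this shows, however, is that up to $O(h^2)$ the left-hand side of (\ref{I9}) equals $-S$, where $S=\sum_{E\in\mathcal{E}_h}\int_E q^e\,\boldsymbol{v}^e\cdot(\boldsymbol{n}_E^++\boldsymbol{n}_E^-)\,d\sigma_h$. The lemma is therefore equivalent to $S=O(h^2)$, and that is exactly what the proposal does not establish.

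The pointwise bound $|\mathbf{P}^e(\boldsymbol{n}_E^++\boldsymbol{n}_E^-)|\le Ch^2$ from (\ref{eq11}), combined with any trace inequality (including the global one you quote), yields only $Ch\|q\|_{H^1(\Gamma)}\|\boldsymbol{v}\|_{\boldsymbol{H}^1(\Gamma)}$, because the total edge length is $O(h^{-1})$. No edge-by-edge argument can do better. On the unit sphere, for example, the conormal component of $\mathbf{P}(\boldsymbol{n}_E^++\boldsymbol{n}_E^-)$ is, to leading order, constant along $E$ and equal to $\frac{|E|^2}{8}(\cot^2A_E^--\cot^2A_E^+)$, where $A_E^\pm$ are the angles of $K^\pm$ opposite $E$. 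For an asymmetric pair of triangles the edge mean of your integrand is therefore genuinely of size $h^2|\boldsymbol{v}\cdot\boldsymbol{n}_{E^l}|$.

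Your fallback of pairing with the lifted edges is illusory. Since $\boldsymbol{n}_{E^l}^++\boldsymbol{n}_{E^l}^-=0$, subtracting the exact-surface edge terms changes nothing. The discrepancy $\boldsymbol{v}^e\cdot(\boldsymbol{n}_E^\pm-\mu_h^E\boldsymbol{n}_{E^l}^{e\pm})$ is governed by (\ref{eq11}), not merely by (\ref{m2}). It is the same $O(h^2)$ quantity again, so there is no exact cancellation up to $\mu_h^E$ terms.

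\textbf{The missing idea is a cancellation between neighbouring elements.} One way to obtain it is to skip integration by parts on $\Gamma_h$ and expand pointwise.

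By the chain rule, using $\boldsymbol{n}^T\nabla\boldsymbol{v}^e\mathbf{P}=-\boldsymbol{v}^T\mathbf{H}$ (differentiate $\boldsymbol{v}\cdot\boldsymbol{n}=0$; here $\mathbf{H}=\nabla^2 d$), one gets $\textnormal{div}_{\Gamma_h}\boldsymbol{v}^e-(\textnormal{div}_\Gamma\boldsymbol{v})^e=\boldsymbol{v}^e\cdot\mathbf{H}\boldsymbol{n}_h+O(h^2)(|\boldsymbol{v}^e|+|(\nabla_\Gamma\boldsymbol{v})^e|)$. The leading term is $O(h)$ pointwise, and in general its mean over a single element is also only $O(h)$.

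On $K$, since $\mathbf{H}\boldsymbol{n}=0$, we have $\mathbf{H}\boldsymbol{n}_h=\mathbf{H}(\boldsymbol{n}_h-\boldsymbol{n})=\mathbf{H}\nabla\phi_K+O(h^2)$. Here $\phi_K$ is the height of $\Gamma$ above the plane of $K$; it vanishes at the vertices and satisfies $\|\phi_K\|_{L^\infty(K)}\le Ch^2$.

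Integrating by parts on $K$ moves the derivative onto $q^e\mathbf{H}\boldsymbol{v}^e$, at a cost of $Ch^2\|q\|_{H^1(\Gamma)}\|\boldsymbol{v}\|_{\boldsymbol{H}^1(\Gamma)}$. It leaves boundary terms $\int_E\phi_K\,q^e\,\mathbf{H}\boldsymbol{v}^e\cdot\boldsymbol{n}_E^K\,d\sigma_h$. These cancel between $K^+$ and $K^-$ up to $O(h^4)$ pointwise, for two reasons:
\begin{itemize}
\item $\phi_{K^\pm}=-d+O(h^4)$ on $E$;
\item $\mathbf{H}(\boldsymbol{n}_E^++\boldsymbol{n}_E^-)=\mathbf{H}\mathbf{P}(\boldsymbol{n}_E^++\boldsymbol{n}_E^-)=O(h^2)$ by (\ref{eq11}).
\end{itemize}
A $W^{1,1}$ trace inequality then bounds their sum by $O(h^3)$.

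The paper itself gives no argument here; it only refers to the proof of Lemma 4.8 in \cite{hardering2025}. A cross-element mechanism of this kind is what your write-up would need to supply.
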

\begin{proof}
    The proof can be found in the proof of Lemma 4.8 in \cite{hardering2025}.
\end{proof}

\begin{lemma}[Geometric $L^2$-estimate]\label{I9}
    For $\boldsymbol{f}, \boldsymbol{v} \in \boldsymbol{L}^2(\Gamma)$, the following estimate holds:
    \begin{equation}
        |(\boldsymbol{f}, \boldsymbol{v})_{\Gamma}- (\boldsymbol{f}^e, \boldsymbol{v}^e)_{\Gamma_h}| \leq C h^2 \| \boldsymbol{f} \|_{\boldsymbol{L}^2(\Gamma)} \| \boldsymbol{v} \|_{\boldsymbol{L}^2(\Gamma)}.
    \end{equation}
\end{lemma}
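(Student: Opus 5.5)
The plan is to transfer the integral over $\Gamma_h$ to $\Gamma$ elementwise, using the change of measure $ds \circ p = \mu_h\, ds_h$ from (\ref{m1}), and then to bound the density defect $1-\mu_h$ in $L^\infty$.

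First I split both integrals over the elements, $(\boldsymbol{f},\boldsymbol{v})_{\Gamma}=\sum_{K}(\boldsymbol{f},\boldsymbol{v})_{K^l}$ and $(\boldsymbol{f}^e,\boldsymbol{v}^e)_{\Gamma_h}=\sum_K(\boldsymbol{f}^e,\boldsymbol{v}^e)_K$. This uses that $p$ restricted to $K$ is a bijection onto $K^l$ for $h$ small, and that the $K^l$ tile $\Gamma$. By definition, $\boldsymbol{f}^e(\boldsymbol{x})=\boldsymbol{f}(p(\boldsymbol{x}))$ and likewise for $\boldsymbol{v}^e$. The change of variables $\boldsymbol{y}=p(\boldsymbol{x})$ therefore gives
\begin{equation*}
\int_{K^l}\boldsymbol{f}\cdot\boldsymbol{v}\,ds=\int_K \boldsymbol{f}^e\cdot\boldsymbol{v}^e\,\mu_h\,ds_h .
\end{equation*}
The elementwise difference is then $\int_K(\mu_h-1)\,\boldsymbol{f}^e\cdot\boldsymbol{v}^e\,ds_h$.

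Next I apply H\"older's inequality on each $K$ together with (\ref{m1}):
\begin{equation*}
\left|\int_K(\mu_h-1)\boldsymbol{f}^e\cdot\boldsymbol{v}^e\,ds_h\right|\le Ch^2\|\boldsymbol{f}^e\|_{\boldsymbol{L}^2(K)}\|\boldsymbol{v}^e\|_{\boldsymbol{L}^2(K)}.
\end{equation*}
I sum over $K$ and use the discrete Cauchy--Schwarz inequality. Then the norm equivalence (\ref{eq22}) of Lemma \ref{lem5}, i.e. $\|\boldsymbol{w}^e\|_{\boldsymbol{L}^2(K)}\le C\|\boldsymbol{w}\|_{\boldsymbol{L}^2(K^l)}$, returns the bound to $Ch^2\|\boldsymbol{f}\|_{\boldsymbol{L}^2(\Gamma)}\|\boldsymbol{v}\|_{\boldsymbol{L}^2(\Gamma)}$.

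Lemma \ref{lem5} is stated for $\boldsymbol{H}^2$ functions, so I apply only its $L^2$ part. That part holds for plain $L^2$ functions, by the same change of variables and the uniform bounds $\mu_h,\mu_h^{-1}\le C$, which follow from (\ref{m1}) for $h$ small.

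There is no real obstacle here. The only point needing care is that the change of variables is legitimate, that is, that $p|_K$ is a bijection onto $K^l$ with Jacobian density $\mu_h$. The paper assumes this via (\ref{m1}) and the assumption $\Gamma_h\subset U_\delta$.
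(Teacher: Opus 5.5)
Your proposal is correct and follows essentially the same route as the paper: the change of variables $ds\circ p=\mu_h\,ds_h$, Cauchy--Schwarz combined with the bound (\ref{m1}) on $1-\mu_h$, and the $L^2$ norm equivalence (\ref{eq22}). Your remark that only the $L^2$ part of Lemma \ref{lem5} is needed is a useful clarification the paper leaves implicit. That part holds for plain $L^2$ functions because $\mu_h$ and $\mu_h^{-1}$ are uniformly bounded.
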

\begin{proof}
    We first apply a change of variable and then the Cauchy-Schwarz inequality. Later, employing the result (\ref{m1}) and the norm equivalence (\ref{eq22}) subsequently, we obtain the desired estimate.
    \begin{align*}
        |(\boldsymbol{f}, \boldsymbol{v})_{\Gamma}- (\boldsymbol{f}^e, \boldsymbol{v}^e)_{\Gamma_h}| & \leq  \|1-\mu_h\|_{L^{\infty}(\Gamma_h)} \| \boldsymbol{f}^e \|_{\boldsymbol{L}^2(\Gamma_h)} \| \boldsymbol{v}^e \|_{\boldsymbol{L}^2(\Gamma_h)} \\
        & \leq C h^2  \| \boldsymbol{f} \|_{\boldsymbol{L}^2(\Gamma)} \| \boldsymbol{v} \|_{\boldsymbol{L}^2(\Gamma)}. 
    \end{align*}
\end{proof}

\subsection{The Crouzeix-Raviart finite element formulation}
\subsubsection{Tangential vector-valued Crouzeix-Raviart space}
\label{ss421}
A detailed discussion on the construction of the Crouzeix–Raviart finite element space for vector-valued functions can be found in \cite{mehlmann2023}. Here, we briefly summarize the main aspects of the formulation relevant to the present work. \vspace{1mm}\\
Let $\hat{K}$ denote the reference triangle with unit edge lengths. For each element $K \in \mathcal{T}_h$, we define an affine mapping $F_K: \hat{K} \rightarrow K$, which maps the reference element onto the physical element $K$. The Crouzeix–Raviart basis functions  $\{ \hat{\phi}_i \}_{i=1}^3$ on $\hat{K}$ are defined using the midpoints $m_{\hat{E}_i}$ of the edges $\hat{E}_i$ as follows:
$$\hat{\phi}_i \in P^1(\hat{K}), \hspace{2mm} \hat{\phi}_i(m_{\hat{E}_j})= \delta_{ij} \hspace{2mm} \forall i,j=1,2,3.$$ Using the edge tangential and conormal vectors, we introduce the finite-dimensional space
$$V_K:= \text{span} \{\boldsymbol{n}_{E_i} \phi_i^K, \boldsymbol{\tau}_{E_j} \phi_j^K, i,j=1,2,3 \},$$
where $\phi_i^K= \hat{\phi}_i \circ F_K^{-1}$. \vspace{1mm}\\
By construction, all elements of $V_K$ are tangential to $\Gamma_h$.
Using this construction, we define in the following the space of tangential Crouzeix–Raviart functions on $\Gamma_h$ whose tangential and conormal components are continuous across edge midpoints:
\begin{equation}
    \begin{split} \label{J0}
        \textbf{V}_h= \{\boldsymbol{v}_h: \Gamma_h \rightarrow \mathbb{R}^3| & \,\boldsymbol{v}_h |_K \in V_K, \hspace{1mm} \forall E = \partial K^{+} \cap \partial K^{-}, \\
& \quad \boldsymbol{v}_h |_{K^+} (m_E) \cdot \boldsymbol{n}_E^+ = - \boldsymbol{v}_h |_{K^-} (m_E) \cdot \boldsymbol{n}_E^- , \\
& \quad \quad \boldsymbol{v}_h |_{K^+} (m_E) \cdot \boldsymbol{\tau}_E^+ = - \boldsymbol{v}_h |_{K^-} (m_E) \cdot \boldsymbol{\tau}_E^- \}.
    \end{split}
\end{equation}
Now using the mid-point rule, we obtain 
\begin{equation}
\label{J1}
    \int_E \lJump \boldsymbol{v}_h \cdot \boldsymbol{n}_E \rJump \hspace{1mm} d \sigma_h = 0 \hspace{1mm} \text{and} \hspace{1mm} \int_E \lJump \boldsymbol{v}_h \cdot \boldsymbol{\tau}_E \rJump \, d \sigma_h = 0,
\end{equation}
where the jump is defined as 
\begin{equation}
    \label{J2}
    \lJump \boldsymbol{v} \rJump = \underset{s \rightarrow 0^+}{\lim} (\boldsymbol{v}(\boldsymbol{x}-s \boldsymbol{n}_{E^+})-\boldsymbol{v}(\boldsymbol{x}-s \boldsymbol{n}_{E^-})).
\end{equation}
The discrete vector-valued Crouzeix-Raviart space, $\textbf{V}_h$, is combined with the space of the piecewise constant functions, $Q_h=\{ q_h \in L_0^2(\Gamma_h): q_h |_K \textcolor{black}{\in P^0(K)} \}$ to discretize the Stokes problem.

\subsubsection{Discrete formulation}
For each element $K$, the discrete derivative and the strain rate tensor of a vector-valued function $\boldsymbol{v}_h \in \textbf{V}_h$ are, respectively, defined by 
 $$\nabla_{\Gamma_h} \boldsymbol{v}_h = \textbf{P}_h \nabla \boldsymbol{v}_h \textbf{P}_h \hspace{2mm} \text{and} \hspace{2mm} E_{\Gamma_h}(\boldsymbol{v}_h) = \frac{1}{2} \left( \nabla_{\Gamma_h} \boldsymbol{v}_h + (\nabla_{\Gamma_h} \boldsymbol{v}_h)^T \right).$$ 
The finite element formulation consists of finding $ (\boldsymbol{u}_h, p_h) \in (\textbf{V}_h, Q_h)$ such that 
\begin{equation}
\begin{split}
\label{eq13}
       a_h(\boldsymbol{u}_h, \boldsymbol{v}_h) + b_h(\boldsymbol{v}_h,p_h)  & = l(\boldsymbol{v}_h) \hspace{2mm} \forall  \boldsymbol{v}_h \in \textbf{V}_h,  \\
        b_h(\boldsymbol{u}_h,q_h)  & = 0 \hspace{8mm} \forall q_h \in Q_h,
\end{split}
\end{equation}
where
\begin{align}
    a_h(\boldsymbol{u}_h, \boldsymbol{v}_h) & = \sum_{K \in \mathcal{T}_h} \int_K \left[ E_{\Gamma_h}(\boldsymbol{u}_h) :  E_{\Gamma_h}(\boldsymbol{v}_h)  +  \boldsymbol{u}_h \cdot \boldsymbol{v}_h \right] \, ds_h + \sum_{E \in \mathcal{E}_h} \int_E \frac{\alpha}{h} \lJump \boldsymbol{u}_h \rJump \lJump \boldsymbol{v}_h \rJump \, d\sigma_h, \\
    b_h(\boldsymbol{v}_h,p_h) & = \sum_{K \in \mathcal{T}_h} \int_K p_h (\text{div}_{\Gamma_h} \boldsymbol{v}_h) \, ds_h,\\
    l(\boldsymbol{v}_h) & = \int_{\Gamma_h} \boldsymbol{f}^e \cdot \boldsymbol{v}_h \, ds_h,
\end{align}
where $\boldsymbol{f}^e$ is the extension of $\boldsymbol{f}$ to $\Gamma_h$.
To study the well-posedness of the discrete formulation, we introduce the discrete energy norm $\| \cdot \|_h$ on $\textbf{V}_h$ defined by
\begin{equation}
\label{eq18}
    \|\boldsymbol{v}_h \|_h^2 = \sum_{K \in \mathcal{T}_h} \| \nabla_{\Gamma_h} \boldsymbol{v}_h \|_{\boldsymbol{L}^2(K)}^2 + \| \boldsymbol{v}_h\|_{\boldsymbol{L}^2(\Gamma_h)}^2+ \sum_{E \in \mathcal{E}_h} \frac{\alpha}{h} \|\lJump \boldsymbol{v}_h \rJump \|_{\boldsymbol{L}^2(E)}^2, \hspace{2mm} \forall \hspace{1mm} \boldsymbol{v}_h \in \textbf{V}_h. 
\end{equation}
\textcolor{black}{The norm $\| \cdot \|_h$ defines a complete norm on $\textbf{V}_h$. The continuity of the bilinear form $a_h(\cdot, \cdot)$ can be established applying the triangle inequality and the Cauchy-Schwarz inequality. The bilinear term $b_h(\cdot, \cdot)$ is shown to be continuous using the Cauchy-Schwarz inequality and the following result
$$ \| \text{div}_{\Gamma_h} \boldsymbol{u}_h \|_{L^2(\Gamma_h)}= \| \text{tr}(\nabla_{\Gamma_h}\boldsymbol{u}_h) \|_{L^2(\Gamma_h)} \leq C \|\nabla_{\Gamma_h}\boldsymbol{u}_h \|_{\boldsymbol{L}^2(\Gamma_h)}. $$
The coercive property of $a_h(\cdot, \cdot)$ is implied from the discrete Korn's inequality, which can be established following the ideas of Brenner \cite{brenner04} and Bonito et. al. \cite{bonito2020} and is the subject of an upcoming work. Therefore, to establish the existence and uniqueness of the discrete solution, it remains to verify that the pair of finite element spaces $(\textbf{V}_h, Q_h)$ satisfies the discrete inf-sup stability condition. This property is established in the following section.}

\subsubsection{Discrete inf-sup stability result}
Prior to establishing the discrete inf-sup stability condition, we introduce an interpolation operator $\tilde{\Pi}_h^{\text{tan}}\!:\!\boldsymbol{H}^1_{\text{tan}} (\Gamma) \rightarrow \textbf{V}_h$ that will be instrumental in the subsequent analysis. It is a weighted variant of the interpolation operator introduced by Mehlmann in \cite{mehlmann2023}. The construction of $\tilde{\Pi}_h^{\tn}$ is summarized in the following. \vspace{1mm}\\
Let $\{ \phi \}_{i=1}^3$ denote the local Crouzeix–Raviart basis functions associated with an element $K \in \Gamma_h$. For $i=1,...,3$, $\boldsymbol{\tau}_{E_i}$ and $\boldsymbol{n}_{E_i}$ denote the tangential and conormal vectors to the edges $E_i$ of $K$. $\boldsymbol{\tau}_{E_i^l}$ and $\boldsymbol{n}_{E_i^l}$ are the same quantities defined on the lifted counterpart $E_i^l$ on $\Gamma$. Furthermore, for a tangential vector field $\boldsymbol{v} \in \boldsymbol{H}^1_{\text{tan}} (\Gamma)$, the quantities $$(\boldsymbol{v} \cdot \boldsymbol{n}_{E_i^l})^e \hspace{2mm} \text{and} \hspace{2mm} (\boldsymbol{v} \cdot \boldsymbol{\tau}_{E^l_i})^e$$ represent the extensions from $K^l$ to $K$ of the conormal and tangential components of $\boldsymbol{v}$, respectively. Now the operator is defined elementwise as
\begin{equation}
\label{eq32}
\tilde{\Pi}_h^{\text{tan}} \boldsymbol{v} \mid_K = \sum_{i=1}^3 \tilde{\alpha}_i^{\text{tan}}(\boldsymbol{v}) \phi_i,    
\end{equation}
\text{where} the coefficient $$\tilde{\alpha}_i^{\text{tan}}(\boldsymbol{v}) = \frac{1}{| E_i |} \int_{E_i} \left[ (\boldsymbol{v} \cdot \boldsymbol{n}_{E^l_i})^e \boldsymbol{n}_{E_i} + (\boldsymbol{v} \cdot \boldsymbol{\tau}_{E^l_i})^e \boldsymbol{\tau}_{E_i} \right] \mu_h^{E_i} \hspace{1mm} d \sigma_h.$$
The definition yields the following properties on an edge $E$:
\begin{equation}
\label{eq33}
    \frac{1}{| E |} \int_{E} \tilde{\Pi}_h^{\text{tan}} \boldsymbol{v} \cdot \boldsymbol{n}_{E} \hspace{1mm}  d\sigma_h  = \frac{1}{| E |} \int_{E} (\boldsymbol{v} \cdot \boldsymbol{n}_{E^l})^e \hspace{1mm} \mu_h^{E} \hspace{1mm} d\sigma_h,
\end{equation}

\begin{equation}
\label{eq34}
    \frac{1}{| E |} \int_{E} \tilde{\Pi}_h^{\text{tan}} \boldsymbol{v} \cdot \boldsymbol{\tau}_{E} \hspace{1mm} d\sigma_h  = \frac{1}{| E |} \int_{E} (\boldsymbol{v} \cdot \boldsymbol{\tau}_{E^l})^e \hspace{1mm} \mu_h^{E} \hspace{1mm} d\sigma_h.
\end{equation}
The property (\ref{eq33}) is essential for establishing the following intermediate results (Lemma \ref{lem6} and Lemma \ref{lem5a}), which play a central role in the proof of the inf-sup stability estimate in Theorem \ref{thm1}.

\begin{lemma}[Conormal jump preservation property]
\label{lem6}
    For any $\boldsymbol{v} \in \boldsymbol{H}^1_{\tn}(\Gamma)$, the following holds
    
    \begin{equation}
    \label{eq29}
        \int_E \lJump (\tilde{\Pi}_h^{\tn} \boldsymbol{v} \cdot \boldsymbol{n}_E) q_h \rJump \hspace{1mm} d \sigma_h= \int_E \lJump (\boldsymbol{v}^e \cdot \boldsymbol{n}_{E^l}^e) q_h \rJump \hspace{1mm} \mu_h^E \, d \sigma_h,
    \end{equation}
    for any $q_h \in Q_h$.
\end{lemma}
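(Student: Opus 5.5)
The identity is local: it reduces to a statement on each side of a single interior edge, and the main tool is the edgewise mean-value property (\ref{eq33}) of $\tilde{\Pi}_h^{\tn}$. Fix $E=\partial K^+\cap\partial K^-$ and write the jump of the product $(\boldsymbol{w}\cdot\boldsymbol{n}_E)q_h$ according to (\ref{J2}). This gives
\[
\int_E \lJump (\boldsymbol{w}\cdot \boldsymbol{n}_E) q_h\rJump\, d\sigma_h = q_h|_{K^+}\int_E \boldsymbol{w}|_{K^+}\cdot \boldsymbol{n}_E^+\, d\sigma_h - q_h|_{K^-}\int_E \boldsymbol{w}|_{K^-}\cdot \boldsymbol{n}_E^-\, d\sigma_h .
\]
The sign convention will be fixed so that both sides of (\ref{eq29}) use the same one; the argument does not depend on which convention is chosen. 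Because $q_h\in Q_h$ is piecewise constant, the traces $q_h|_{K^\pm}$ are constants on $E$ and can be pulled out of the edge integrals. The same decomposition holds for the right-hand side, with $\boldsymbol{w}|_{K^\pm}\cdot\boldsymbol{n}_E^\pm$ replaced by $\boldsymbol{v}^e\cdot\boldsymbol{n}_{E^l}^{e\pm}$ and the weight $\mu_h^E$ kept inside the integrals.

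The core step is to apply (\ref{eq33}) separately on each side. The operator $\tilde{\Pi}_h^{\tn}$ is defined elementwise by (\ref{eq32}), and its coefficient on $E$ from $K^\pm$ uses the conormal $\boldsymbol{n}_E^\pm$ and the lifted conormal $\boldsymbol{n}_{E^l}^\pm$ of that element. Hence (\ref{eq33}) should hold for each one-sided trace:
\[
\int_E \tilde{\Pi}_h^{\tn}\boldsymbol{v}|_{K^\pm}\cdot \boldsymbol{n}_E^\pm \, d\sigma_h=\int_E (\boldsymbol{v}\cdot \boldsymbol{n}_{E^l}^\pm)^e \mu_h^E\, d\sigma_h .
\]
To justify this, I will check it directly from the definition of the coefficient and the Crouzeix--Raviart basis. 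On $E_i$ one has $\int_{E_i}\phi_j\,d\sigma_h=|E_i|\delta_{ij}$, since $\phi_j$ is affine and equals $\delta_{ij}$ at the midpoint $m_{E_i}$. Consequently $\int_{E_i}\tilde{\Pi}_h^{\tn}\boldsymbol{v}\,d\sigma_h=|E_i|\,\tilde{\alpha}_i^{\tn}(\boldsymbol{v})$. Taking the dot product with $\boldsymbol{n}_{E_i}$ and using that $\boldsymbol{n}_{E_i}$ and $\boldsymbol{\tau}_{E_i}$ are orthonormal on the planar element (so $\boldsymbol{\tau}_{E_i}\cdot\boldsymbol{n}_{E_i}=0$) then yields (\ref{eq33}) from $K^\pm$.

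Next I rewrite $(\boldsymbol{v}\cdot\boldsymbol{n}_{E^l}^\pm)^e=\boldsymbol{v}^e\cdot\boldsymbol{n}_{E^l}^{e\pm}$. This holds because extension is composition with the closest-point projection $p$, and composition commutes with pointwise products. Multiplying each one-sided identity by the constant $q_h|_{K^\pm}$ and subtracting the two gives (\ref{eq29}) on $E$.

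The only delicate point is the one-sidedness. Since $\boldsymbol{n}_E^+\neq-\boldsymbol{n}_E^-$, the operator $\tilde{\Pi}_h^{\tn}$ has to reproduce the conormal average separately from each element, and it does not suffice to reproduce it for a single shared edge quantity. I will therefore verify that the definition (\ref{eq32}) is applied per element with that element's own $\boldsymbol{n}_{E_i}$. I will also check that this is consistent with membership in $\textbf{V}_h$: the two one-sided averages $\int_E(\boldsymbol{v}\cdot\boldsymbol{n}^\pm_{E^l})^e\mu_h^E\,d\sigma_h$ are negatives of each other because $\boldsymbol{n}_{E^l}^+=-\boldsymbol{n}_{E^l}^-$, which matches the midpoint continuity conditions in (\ref{J0}).
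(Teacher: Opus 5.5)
Your proposal is correct and follows the paper's proof. You split the edge jump into its two one-sided contributions, pull out the constant traces $q_h|_{K^\pm}$, apply the mean-value property (\ref{eq33}) separately on each side with that element's own $\boldsymbol{n}_E^\pm$, and recombine. Your direct check of (\ref{eq33}) from $\int_{E_i}\phi_j\,d\sigma_h=|E_i|\delta_{ij}$ and $\boldsymbol{\tau}_{E_i}\cdot\boldsymbol{n}_{E_i}=0$ is sound, and so is your remark that the identity holds for either sign convention of the jump (the paper uses the sum $\boldsymbol{v}_h^+\cdot\boldsymbol{n}_E^+q_h^+ + \boldsymbol{v}_h^-\cdot\boldsymbol{n}_E^-q_h^-$).
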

\begin{proof}
    Let, $\boldsymbol{v}_h=\tilde{\Pi}_h^{\tn} \boldsymbol{v} \in \textbf{V}_h$. \vspace{1mm}\\
First, we expand the jump term and since $q_h$ is piecewise constant on $\Gamma_h$, the traces $q_h^+$ and $q_h^-$ appear. Then, applying property (\ref{eq33}), we obtain the expression in the third line, which leads to the final result.
    \begin{align*}
        \int_E \lJump (\boldsymbol{v}_h \cdot \boldsymbol{n}_E ) q_h \rJump \hspace{1mm} d \sigma_h & = \int_E \left( \boldsymbol{v}_h^+ \cdot \boldsymbol{n}_E^+ q_h^+ + \boldsymbol{v}_h^- \cdot \boldsymbol{n}_E^- q_h^-\right) \hspace{1mm} d \sigma_h \\
        & = q_h^+ \mid_E  \int_E \boldsymbol{v}_h^+ \cdot \boldsymbol{n}_E^+ \hspace{1mm} d \sigma_h + q_h^- \mid_E  \int_E \boldsymbol{v}_h^- \cdot \boldsymbol{n}_E^- \hspace{1mm} d \sigma_h \\
       & = q_h^+ \mid_E  \int_E \boldsymbol{v}^{e+} \cdot \boldsymbol{n}_{E^l}^{e+} \hspace{1mm} \mu_h^E \, d \sigma_h +  q_h^- \mid_E  \int_E \boldsymbol{v}^{e-} \cdot \boldsymbol{n}_{E^l}^{e-}\hspace{1mm} \mu_h^E \, d \sigma_h \\
        & = \int_E \lJump \boldsymbol{v}^e \cdot \boldsymbol{n}_{E^l}^{e} q_h \rJump \hspace{1mm} \mu_h^E \, d \sigma_h.
    \end{align*}
\end{proof}

\begin{lemma}[Divergence preservation under lifting] \label{lem5a}
     For any $\boldsymbol{v} \in \boldsymbol{H}^1_{\tn}(\Gamma)$ and any $q_h \in Q_h$, the following \textcolor{black}{equality} holds:
     \begin{equation}
         \label{eq29aa}
         b_h(\tilde{\Pi}_h^{\tn} \boldsymbol{v}, q_h)= b(\boldsymbol{v}, q_h^l).
     \end{equation}
\end{lemma}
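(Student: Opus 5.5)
The identity will follow by integrating by parts elementwise on both surfaces, so that each side of (\ref{eq29aa}) becomes a sum of edge integrals of conormal fluxes weighted by the piecewise constant pressure. Lemma \ref{lem6} then identifies the two sums. Throughout, $q_h\in Q_h$ is constant on each $K$, and its lift $q_h^l$ is constant on each curved element $K^l$ with the same value.

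\emph{Discrete side.} On a planar triangle $K$, $\boldsymbol{v}_h=\tilde{\Pi}_h^{\tn}\boldsymbol{v}$ is affine and tangential to $K$. Hence $\text{div}_{\Gamma_h}\boldsymbol{v}_h$ is the ordinary planar divergence, and the divergence theorem on $K$ gives
\begin{equation*}
b_h(\boldsymbol{v}_h,q_h)=\sum_{K\in\mathcal{T}_h} q_h|_K\int_{\partial K}\boldsymbol{v}_h\cdot\boldsymbol{n}_{E}^K\,d\sigma_h=\sum_{E\in\mathcal{E}_h}\int_E\lJump(\boldsymbol{v}_h\cdot\boldsymbol{n}_E)q_h\rJump\,d\sigma_h .
\end{equation*}
The regrouping uses that every edge is interior, since $\partial\Gamma=\emptyset$, and it uses the convention of Lemma \ref{lem6}: the jump is the sum of the two one-sided contributions with their own conormals $\boldsymbol{n}_E^\pm$. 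Lemma \ref{lem6} converts each edge term to $\int_E\lJump(\boldsymbol{v}^e\cdot\boldsymbol{n}_{E^l}^e)q_h\rJump\,\mu_h^E\,d\sigma_h$.

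\emph{Continuous side.} On each curved element $K^l$, $q_h^l$ is constant and $\boldsymbol{v}\in\boldsymbol{H}^1_{\tn}(\Gamma)$ is tangential. The surface divergence theorem on a surface patch with boundary therefore has no curvature term, and gives $\int_{K^l}\text{div}_\Gamma\boldsymbol{v}\,q_h^l\,ds=q_h|_K\int_{\partial K^l}\boldsymbol{v}\cdot\boldsymbol{n}_{E^l}\,d\sigma$. Summing over $K^l\in\mathcal{T}_h^l$ yields $b(\boldsymbol{v},q_h^l)=\sum_{E}\int_{E^l}\lJump(\boldsymbol{v}\cdot\boldsymbol{n}_{E^l})q_h^l\rJump\,d\sigma$. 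The traces of $\boldsymbol{v}$ from the two sides of $E^l$ agree, since $\boldsymbol{v}\in\boldsymbol{H}^1$; only $q_h^l$ genuinely jumps, and $\boldsymbol{n}_{E^l}^+=-\boldsymbol{n}_{E^l}^-$.

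\emph{Matching the two sides.} We pull each edge integral back to $E$ via the closest-point projection, using $d\sigma\circ p=\mu_h^E\,d\sigma_h$. Since $(\boldsymbol{v}\cdot\boldsymbol{n}_{E^l})\circ p=\boldsymbol{v}^e\cdot\boldsymbol{n}^e_{E^l}$ on $E$ and $q_h^l\circ p=q_h$, we get
\begin{equation*}
\int_{E^l}\lJump(\boldsymbol{v}\cdot\boldsymbol{n}_{E^l})q_h^l\rJump\,d\sigma=\int_E\lJump(\boldsymbol{v}^e\cdot\boldsymbol{n}^e_{E^l})q_h\rJump\,\mu_h^E\,d\sigma_h .
\end{equation*}
This is exactly the expression obtained on the discrete side, which proves (\ref{eq29aa}).

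The main care points are as follows. First, the divergence theorem must be justified on the curved element $K^l$ for merely $\boldsymbol{H}^1$ tangential fields; I would first argue for smooth tangential fields and then use density together with trace continuity. Second, the extension of $\boldsymbol{n}_{E^l}$ and the weight $\mu_h^E$ must be consistent with the definition of $\tilde{\alpha}_i^{\tn}$. This is precisely what (\ref{eq33}) encodes, and it is why the weighted interpolant is needed for an exact identity rather than one holding only up to $O(h^2)$.
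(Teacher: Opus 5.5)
Your proposal is correct and follows essentially the same route as the paper. Both arguments integrate by parts elementwise on $\Gamma_h$ to reduce $b_h$ to conormal edge fluxes, apply Lemma \ref{lem6}, pull the edge integrals over to $E^l$ via $d\sigma\circ p=\mu_h^E\,d\sigma_h$, and integrate by parts on the curved elements $K^l$, where there is no curvature term because $\boldsymbol{v}$ is tangential. The differences are only presentational: you reduce both sides to edge sums and match them in the middle, and you add the density argument on $K^l$ for $\boldsymbol{H}^1$ fields, which the paper leaves implicit.
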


\begin{proof}
We begin the derivation with employing integration by parts elementwise. Since $q_h \in Q_h$, we have $\nabla_{\Gamma_h} q_h |_K =0$, and hence the elementwise integration gives rise to jump contributions across the edges. We obtain the expression in the third step by employing the conormal jump preservation property (\ref{eq29}) of the interpolation operator. Next, a change of variables is used to lift the integral onto the exact surface. Using again the property $\nabla_{\Gamma_h} q_h^l |_K =0$, we obtain the desired expression. 
\allowdisplaybreaks
    \begin{align*}
        b_h(\tilde{\Pi}_h^{\tn} \boldsymbol{v},q_h) 
            & = \sum_{K \in \mathcal{T}_h} \int_K \left(\text{div}_{\Gamma_h} \tilde{\Pi}_h^{\tn} \boldsymbol{v} \right) q_h  \hspace{1mm} ds_h  \\
            & = \sum_{E \in \mathcal{E}_h} \int_E \lJump ( \tilde{\Pi}_h^{\tn} \boldsymbol{v} \cdot \boldsymbol{n}_E ) q_h \rJump \hspace{1mm}  d \sigma_h   \\
            & = \sum_{E \in \mathcal{E}_h} \int_E \lJump (\boldsymbol{v}^e \cdot \boldsymbol{n}_{E^l}^e )q_h \rJump \hspace{1mm} \mu_h^E d \sigma_h \numberthis \label{eq36} \\
            & = \sum_{E^l \in \mathcal{E}_h^l} \int_{E^l} \lJump (\boldsymbol{v} \cdot \boldsymbol{n}_{E^l} ) q_h^l \rJump \hspace{1mm} d \sigma \\
            & =  \int_{\Gamma} (\text{div}_{\Gamma} \boldsymbol{v}) (q_h)^l \hspace{1mm} ds.
    \end{align*}
\end{proof}


\begin{theorem}[Discrete inf-sup condition]
\label{thm1}
    There exists a constant $C>0$, independent of $h$, such that for all $q_h \in Q_h$,
    \begin{equation} \label{1s}
        \underset{\boldsymbol{v}_h \in \textbf{V}_h}{\mathrm{sup}} \frac{b_h(\boldsymbol{v}_h, q_h)}{ \|\boldsymbol{v}_{h}\|_{h}} \geq C \| q_h\|_{L^2(\Gamma_h)}.
    \end{equation}
\end{theorem}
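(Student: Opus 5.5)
We use a Fortin-type argument built on the interpolation operator $\tilde{\Pi}_h^{\tn}$ and Lemma \ref{lem5a}.

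\textbf{Step 1: continuous test function.} Fix $q_h \in Q_h$ and lift it to $q_h^l$ on $\Gamma$. Then $q_h^l$ is piecewise constant on $\mathcal{T}_h^l$. Its mean value is not exactly zero, since $\int_{\Gamma} q_h^l \, ds = \int_{\Gamma_h} q_h \mu_h \, ds_h$. We therefore work with $\bar q := q_h^l - |\Gamma|^{-1}\int_\Gamma q_h^l\,ds \in L^2_0(\Gamma)$. By (\ref{m1}) and $\int_{\Gamma_h} q_h\,ds_h=0$, the mean is $O(h^2)\|q_h\|_{L^2(\Gamma_h)}$. Constants integrate to zero against $\text{div}_\Gamma \boldsymbol{v}$ on a closed surface, so $b(\boldsymbol{v},\bar q)=b(\boldsymbol{v},q_h^l)$. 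The continuous inf-sup condition (\ref{eq7}) then gives $\boldsymbol{v}\in \boldsymbol{H}^1_{\tn}(\Gamma)$ with
\begin{equation*}
b(\boldsymbol{v}, q_h^l) \geq \beta \|\bar q\|_{L^2(\Gamma)} \|\boldsymbol{v}\|_{\boldsymbol{H}^1_{\tn}(\Gamma)}.
\end{equation*}
By (\ref{eq22}) and the smallness of the mean, $\|\bar q\|_{L^2(\Gamma)} \geq C\|q_h\|_{L^2(\Gamma_h)}$ for $h$ small.

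\textbf{Step 2: transfer to the discrete space.} Set $\boldsymbol{v}_h=\tilde{\Pi}_h^{\tn}\boldsymbol{v}$. Lemma \ref{lem5a} gives $b_h(\boldsymbol{v}_h,q_h)=b(\boldsymbol{v},q_h^l)$. It then suffices to prove the stability bound
\begin{equation*}
\|\tilde{\Pi}_h^{\tn}\boldsymbol{v}\|_h \leq C \|\boldsymbol{v}\|_{\boldsymbol{H}^1_{\tn}(\Gamma)},
\end{equation*}
since this yields $b_h(\boldsymbol{v}_h,q_h)/\|\boldsymbol{v}_h\|_h \geq C\beta\|q_h\|_{L^2(\Gamma_h)}$.

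\textbf{Step 3: the stability bound (main obstacle).} We bound the three parts of $\|\cdot\|_h$ in turn.

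The $L^2$ and broken gradient parts are local. On each $K$, $\tilde{\Pi}_h^{\tn}\boldsymbol{v}$ is determined by edge averages of $(\boldsymbol{v}\cdot\boldsymbol{n}_{E^l})^e$ and $(\boldsymbol{v}\cdot\boldsymbol{\tau}_{E^l})^e$, with $\mu_h^E$ bounded by (\ref{m2}). A trace inequality on $K$ then gives $\|\tilde{\Pi}_h^{\tn}\boldsymbol{v}\|_{\boldsymbol{L}^2(K)} \le C(\|\boldsymbol{v}^e\|_{\boldsymbol{L}^2(K)}+h|\boldsymbol{v}^e|_{\boldsymbol{H}^1(K)})$. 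For the gradient, we compare with a locally reconstructed field $\boldsymbol{w}_K=\sum_i[(\bar{\boldsymbol{v}}^e\cdot\boldsymbol{n}_{E_i^l})\boldsymbol{n}_{E_i}+\dots]\phi_i$, where $\bar{\boldsymbol{v}}^e$ is the mean of $\boldsymbol{v}^e$ over $K$. An inverse inequality gives $|\tilde{\Pi}_h^{\tn}\boldsymbol{v}|_{H^1(K)}\le Ch^{-1}\|\tilde{\Pi}_h^{\tn}\boldsymbol{v}-\boldsymbol{P}_h\bar{\boldsymbol{v}}^e\|_{L^2(K)}+\dots$. The difference is controlled by Poincaré on $K$, which contributes $|\boldsymbol{v}^e|_{H^1(K)}$. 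The geometric mismatch between $\boldsymbol{n}_{E^l}$ and $\boldsymbol{n}_E$ is $O(h)$ by (\ref{eq10}), and multiplied by $h^{-1}$ it yields an $\|\boldsymbol{v}^e\|_{L^2(K)}$ term. Summing over $K$ and applying (\ref{eq22})--(\ref{eq23}) bounds both parts by $\|\boldsymbol{v}\|_{\boldsymbol{H}^1(\Gamma)}$.

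The jump term is the delicate one. On an edge, $\lJump \boldsymbol{v}_h\rJump$ has mean zero in the $\boldsymbol{\tau}_E$ and $\boldsymbol{n}_E$ components by (\ref{J1}). However, $\boldsymbol{n}_E^+\neq-\boldsymbol{n}_E^-$, so the full vector jump also carries a contribution of size $|\boldsymbol{n}_E^++\boldsymbol{n}_E^-|\,|\boldsymbol{v}_h\cdot\boldsymbol{n}_E|$. By (\ref{eq10}) and $\boldsymbol{n}_{E^l}^+=-\boldsymbol{n}_{E^l}^-$, this factor is $O(h)$. We split $\lJump\boldsymbol{v}_h\rJump=\lJump\boldsymbol{v}_h-\boldsymbol{v}^e\rJump+\lJump\boldsymbol{v}^e\rJump$. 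Here $\lJump \boldsymbol{v}^e\rJump=0$, because $\boldsymbol{v}^e$ is continuous across edges as the extension of an $H^1$ field. For the first piece, the trace inequality gives
\begin{equation*}
h^{-1}\|\boldsymbol{v}_h-\boldsymbol{v}^e\|_{L^2(E)}^2\le C\bigl(h^{-2}\|\boldsymbol{v}_h-\boldsymbol{v}^e\|^2_{L^2(K)}+|\boldsymbol{v}_h-\boldsymbol{v}^e|^2_{H^1(K)}\bigr).
\end{equation*}
We then need the local approximation estimate $\|\boldsymbol{v}_h-\boldsymbol{v}^e\|_{L^2(K)}\le C h\|\boldsymbol{v}^e\|_{H^1(K)}$. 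This follows from the same reconstruction argument, using (\ref{eq11})--(\ref{eq12}) together with the fact that $\boldsymbol{v}^e$ differs from its tangential part $\boldsymbol{P}_h\boldsymbol{v}^e$ by $(\boldsymbol{P}^e-\boldsymbol{P}_h)\boldsymbol{v}^e=O(h)\boldsymbol{v}^e$, by (\ref{eq9}). Summing gives $\sum_E \frac{\alpha}{h}\|\lJump\boldsymbol{v}_h\rJump\|^2_{L^2(E)}\le C\|\boldsymbol{v}\|^2_{\boldsymbol{H}^1(\Gamma)}$. Keeping track of the $O(h)$ geometric factors so that they exactly compensate the $h^{-1}$ weights is where we expect the main difficulty.
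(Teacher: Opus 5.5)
Your proof is correct and follows the same Fortin skeleton as the paper: a continuous field, $\boldsymbol{v}_h=\tilde{\Pi}_h^{\tn}\boldsymbol{v}$, Lemma \ref{lem5a}, then a bound on $\|\boldsymbol{v}_h\|_h$. It departs from the paper at two points, both to your advantage.

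For the continuous field, the paper takes $\boldsymbol{w}$ with $\text{div}_\Gamma \boldsymbol{w}=q_h^l$. On a closed surface this requires $\int_\Gamma q_h^l\,ds=0$, but that integral equals $\int_{\Gamma_h}q_h(\mu_h-1)\,ds_h$ and is only $O(h^2)\|q_h\|_{L^2(\Gamma_h)}$. Your mean correction combined with (\ref{eq7}) is therefore the precise form of that step.

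For the jump term, the paper subtracts the edge average $\bar{\boldsymbol{v}}_h$, asserts $\lJump\bar{\boldsymbol{v}}_h\rJump=0$, and bounds everything by the broken gradient via trace and Poincar\'e inequalities. This is a purely discrete argument that needs no approximation property of $\tilde{\Pi}_h^{\tn}$. As you noticed, the assertion fails for the vector jump. The midpoint conditions give $\bar{\boldsymbol{v}}_h^{\pm}=\pm a\,\boldsymbol{n}_E^{\pm}+b\,\boldsymbol{\tau}_E^{+}$, hence $\lJump\bar{\boldsymbol{v}}_h\rJump=a(\boldsymbol{n}_E^{+}+\boldsymbol{n}_E^{-})=O(h)\,a$. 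After the weight $h^{-1}$ this adds a term of order $\|\boldsymbol{v}_h\|^2_{\boldsymbol{L}^2(K^{+})}$, which is harmless but absent from the paper's chain.

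Your comparison with $\boldsymbol{v}^e$, whose jump vanishes identically, avoids this altogether. The cost is the local estimate $\|\tilde{\Pi}_h^{\tn}\boldsymbol{v}-\boldsymbol{v}^e\|_{\boldsymbol{L}^2(K)}\le Ch\|\boldsymbol{v}^e\|_{\boldsymbol{H}^1(K)}$ in the full $H^1$ norm. Its $L^2$ part absorbs $(\mathbf{P}^e-\mathbf{P}_h)\boldsymbol{v}^e$ and the $O(h)$ coefficient mismatch from (\ref{eq10}). The estimate follows from your mean-value reconstruction together with (\ref{eq9}) and Poincar\'e on $K$. Once it holds, the $O(h)$ factors cancel the $h^{-2}$ from the trace inequality exactly, so the bookkeeping you flag as the main difficulty is already settled by your own argument. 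You also sketch the $H^1$-stability of $\tilde{\Pi}_h^{\tn}$, which the paper only invokes.
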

\begin{proof} 
For a given discrete pressure, we first construct a suitable discrete velocity test function that satisfies the stability and compatibility properties required for the inf-sup analysis. \vspace{1mm}\\
\textcolor{black}{For a fixed $q_h \in Q_h$,} there exists $\boldsymbol{w} \in \boldsymbol{H}^1_{\tn}(\Gamma)$ (Pg. 258 in \cite{demlow2024}) such that 
\begin{equation}
\begin{split}
\label{eq30}
    \text{div}_{\Gamma} \boldsymbol{w} & =  q_h^l \in L^2(\Gamma), \hspace{1mm} \text{and} \\
    \|\boldsymbol{w} \|_{\boldsymbol{H}^1(\Gamma)} & \leq C \|  q_h^l \|_{L^2(\Gamma)} \leq C \| q_h \|_{L^2(\Gamma_h)}.
\end{split}
\end{equation}
 We choose the discrete test function $\boldsymbol{v}_h$ as $$\boldsymbol{v}_h = \tilde{\Pi}_h^{\tn} \boldsymbol{w}.$$ 
We now derive a lower bound for $b_h(\boldsymbol{v}_h,q_h)$ in terms of $\| q_h \|_{L^2(\Gamma_h)}^2$. This estimate provides the numerator, required in the inf-sup condition. \vspace{1mm} \\
The divergence preservation property (\ref{eq29aa}) of the interpolation operator $\tilde{\Pi}_h^{\tn}$ gives the first step in the following derivation. Later, the estimate (\ref{eq30}) is invoked in the last line and we achieve the appropriate estimate by applying the norm equivalence result (\ref{eq22}).
\allowdisplaybreaks
    \begin{equation}
    \begin{split} \label{30a}
        b_h(\boldsymbol{v}_h,q_h) 
            & =  \int_{\Gamma} (\text{div}_{\Gamma} \boldsymbol{w}) (q_h)^l  \hspace{1mm} ds \\
            & =  \|q_h^l \|_{L^2(\Gamma)}^2 \geq C \| q_h \|_{L^2(\Gamma_h)}^2.
            \end{split}
    \end{equation}
  Afterwards, we establish an upper bound for the denominator term $\| \boldsymbol{v}_h \|_h$ with respect to $\| q_h\|_{L^2(\Gamma_h)}$ to obtain the compatibility result (\ref{1s}). \vspace{1mm}\\
 To estimate the broken $H^1$-norm part of the energy norm $\| \cdot \|_h$, we apply the $H^1$-stability of the interpolation operator $\tilde{\Pi}_h^{tan}$ \textcolor{black}{in the second inequality and the result (\ref{eq30}) in the last bound}
    \begin{equation}
    \begin{split}
    \label{36a}
        \sum_{K \in \mathcal{T}_h} \| \nabla_{\Gamma_h} \boldsymbol{v}_h \|_{\boldsymbol{L}^2(K)}^2 + \| \boldsymbol{v}_h\|_{\boldsymbol{L}^2(\Gamma_h)}^2 & \leq C \| \boldsymbol{w} \|_{\boldsymbol{H}^1(\Gamma)}^2 \leq C \| q_h \|_{L^2(\Gamma_h)}.
    \end{split}
    \end{equation}
   Next, we derive a suitable estimate for the jump term, which appears in $\| \cdot \|_h$. To this end, we introduce the edge average $$\bar{\boldsymbol{v}}_h= \frac{1}{| E |} \int_E \boldsymbol{v}_h \hspace{1mm} d \sigma_h.$$
By definition of the vector-valued Crouzeix-Raviart element, $\lJump \bar{\boldsymbol{v}}_h\rJump=0$, see (\ref{J0}). We apply the trace inequality and the discrete Poincar$\acute{e}$ inequality 
successively in the \textcolor{black}{third and fourth steps}, respectively. \textcolor{black}{Besides, $\nabla_{\Gamma_h} \bar{\boldsymbol{v}}_h=0 $ is used to obtain the bound in fourth step.} The last two \textcolor{black}{bounds} are obtained employing the $H^1$-stability and the result (\ref{eq30}), respectively.
\allowdisplaybreaks
    \begin{align*}
         \frac{1}{h} \|\lJump \boldsymbol{v}_h \rJump \|_{\boldsymbol{L}^2(E)}^2 & = \frac{1}{h} \|\lJump \boldsymbol{v}_h - \bar{\boldsymbol{v}}_h\rJump \|_{\boldsymbol{L}^2(E)}^2 \\
   & \leq  \frac{1}{h} \left( \| \boldsymbol{v}_h - \bar{\boldsymbol{v}}_h  \|_{\boldsymbol{L}^2(\partial K^+)}^2 + \| \boldsymbol{v}_h - \bar{\boldsymbol{v}}_h  \|_{\boldsymbol{L}^2(\partial K^-)}^2\right) \\
    & \leq \frac{C}{h} \left[ h^{-1} \| \boldsymbol{v}_h - \bar{\boldsymbol{v}}_h \|_{\boldsymbol{L}^2(K^+)}^2 + h  \| \nabla_{\Gamma_h} (\boldsymbol{v}_h - \bar{\boldsymbol{v}}_h) \|_{\boldsymbol{L}^2(K^+)}^2 \right] + \\
    & \quad \frac{C}{h} \left[ h^{-1} \| \boldsymbol{v}_h - \bar{\boldsymbol{v}}_h \|_{\boldsymbol{L}^2(K^-)}^2 + h  \| \nabla_{\Gamma_h} (\boldsymbol{v}_h - \bar{\boldsymbol{v}}_h) \|_{\boldsymbol{L}^2(K^-)}^2 \right] \numberthis \label{36b}\\
    & \leq \sum_{K \in \mathcal{T}_h} \frac{C }{h} \left[ h^{-1} h^2 \| \nabla_{\Gamma_h} \boldsymbol{v}_h \|^2_{\boldsymbol{L}^2(K)} + h  \| \nabla_{\Gamma_h} \boldsymbol{v}_h \|_{\boldsymbol{L}^2(K)}^2\right] \hspace{1mm} \\
    & \leq C \sum_{K \in \mathcal{T}_h} \| \nabla_{\Gamma_h} \boldsymbol{v}_h \|^2_{\boldsymbol{L}^2(K)}  \leq C \| \boldsymbol{w} \|_{\boldsymbol{H}^1(\Gamma)}^2 \leq  C \| q_h\|_{L^2(\Gamma_h)}^2.
\end{align*}
 Hence, \textcolor{black}{combining (\ref{36b})} with (\ref{36a}), we obtain
\begin{equation}
\label{eq24}
    \| \boldsymbol{v}_h \|_h \leq C \| q_h \|_{L^2(\Gamma_h)}.
\end{equation}
\textcolor{black}{Using the lower bound for $b(\boldsymbol{v}_h, q_h)$ obtained in (\ref{30a}) together with the estimate} (\ref{eq24}) for $\| \boldsymbol{v}_h \|_{h}$, we obtain the desired discrete inf-sup condition with a constant independent of the mesh size $h$ as follows:
\[ \frac{b_h(\boldsymbol{v}_h, q_h)}{\| \boldsymbol{v}_h \|_{h}} \geq C  \frac{ \| q_h \|_{L^2(\Gamma_h)}^2}{ \| q_h \|_{L^2(\Gamma_h)}}  = C \| q_h \|_{L^2(\Gamma_h)}.  \]
\end{proof}

\section{Error estimation} \label{sec5}
\subsection{Interpolation estimates}
The following estimates play a key role in the subsequent error analysis.
\begin{lemma}[Interpolation estimates]
\label{lem7}
For $\boldsymbol{v} \in \boldsymbol{H}^1_{\tn} (\Gamma_h) \cap \boldsymbol{H}^2(\Gamma_h)$ and for the interpolation operator $\tilde{\Pi}_h^{\tn}$ introduced in (\ref{eq32}), the following interpolation error estimates hold:
    \begin{align}
    \label{I1}
        \| \nabla_{\Gamma_h} (\boldsymbol{v}^e- \tilde{\Pi}_h^{\tn} \boldsymbol{v}) \|_{\boldsymbol{L}^2(\Gamma_h)} & \leq C h \| \boldsymbol{v}^e \|_{\boldsymbol{H}^2(\Gamma_h)}, \\
        \label{I2}
         \| \mathbf{P}^e (\boldsymbol{v}^e- \tilde{\Pi}_h^{\tn} \boldsymbol{v}) \|_{\boldsymbol{L}^2(\Gamma_h)} & \leq C h^2 \| \boldsymbol{v}^e \|_{\boldsymbol{H}^2(\Gamma_h)}, \\
         \label{I3}
         \| \mathbf{P}^e (\boldsymbol{v}^e- \tilde{\Pi}_h^{\tn} \boldsymbol{v}) \|_{\boldsymbol{L}^2(E)} & \leq C h^{\frac{3}{2}} \| \boldsymbol{v}^e \|_{\boldsymbol{H}^2(\Gamma_h)}, \\
         \label{I4}
         \| \mathbf{P}_h (\boldsymbol{v}^e- \tilde{\Pi}_h^{\tn} \boldsymbol{v}) \|_{\boldsymbol{L}^2(\Gamma_h)} & \leq C h^2 \| \boldsymbol{v}^e \|_{\boldsymbol{H}^2(\Gamma_h)}, \\
         \label{I5}
         \| \mathbf{P}_h (\boldsymbol{v}^e- \tilde{\Pi}_h^{\tn} \boldsymbol{v}) \|_{\boldsymbol{L}^2(E)} & \leq C h^{\frac{3}{2}} \| \boldsymbol{v}^e \|_{\boldsymbol{H}^2(\Gamma_h)}, 
    \end{align}
    and in the energy norm, we have
    \begin{equation}
    \label{I6}
         \| \boldsymbol{v}^e- \tilde{\Pi}_h^{\tn} \boldsymbol{v} \|_{h}  \leq C h \| \boldsymbol{v}^e \|_{\boldsymbol{H}^2(\Gamma_h)}.
    \end{equation}
\end{lemma}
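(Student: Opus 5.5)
The estimates will be proved elementwise by comparing $\tilde{\Pi}_h^{\tn}\boldsymbol{v}$ with the standard componentwise Crouzeix--Raviart interpolant $\Pi_h^{CR}\boldsymbol{v}^e$ on each flat triangle $K$. This interpolant is defined by $\frac{1}{|E_i|}\int_{E_i}\Pi_h^{CR}\boldsymbol{v}^e\,d\sigma_h=\frac{1}{|E_i|}\int_{E_i}\boldsymbol{v}^e\,d\sigma_h$. Since $K$ is planar and $\boldsymbol{v}^e|_K$ is $H^2$, the classical Bramble--Hilbert/scaling argument on the affine reference map $F_K$ gives
\[
h^{-1}\|\boldsymbol{v}^e-\Pi_h^{CR}\boldsymbol{v}^e\|_{\boldsymbol{L}^2(K)}+\|\nabla(\boldsymbol{v}^e-\Pi_h^{CR}\boldsymbol{v}^e)\|_{\boldsymbol{L}^2(K)}+h^{-1/2}\|\boldsymbol{v}^e-\Pi_h^{CR}\boldsymbol{v}^e\|_{\boldsymbol{L}^2(\partial K)}\le Ch|\boldsymbol{v}^e|_{\boldsymbol{H}^2(K)} .
\]
Multiplying by the constant projection $\mathbf{P}_h$ on $K$ preserves these bounds. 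The remaining task is to bound the difference $\boldsymbol{d}_K:=\tilde{\Pi}_h^{\tn}\boldsymbol{v}-\Pi_h^{CR}\boldsymbol{v}^e$ on $K$. This is a linear combination $\sum_i \boldsymbol{\delta}_i\phi_i$ with
\[
\boldsymbol{\delta}_i=\frac{1}{|E_i|}\int_{E_i}\Big[(\boldsymbol{v}\cdot\boldsymbol{n}_{E_i^l})^e\mu_h^{E_i}\boldsymbol{n}_{E_i}+(\boldsymbol{v}\cdot\boldsymbol{\tau}_{E_i^l})^e\mu_h^{E_i}\boldsymbol{\tau}_{E_i}-\boldsymbol{v}^e\Big]d\sigma_h .
\]

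The central step is the geometric bound on $\boldsymbol{\delta}_i$. Since $\boldsymbol{v}$ is tangential, $\boldsymbol{v}^e=(\boldsymbol{v}^e\cdot\boldsymbol{n}^e_{E^l})\boldsymbol{n}^e_{E^l}+(\boldsymbol{v}^e\cdot\boldsymbol{\tau}^e_{E^l})\boldsymbol{\tau}^e_{E^l}$. The integrand of $\boldsymbol{\delta}_i$ therefore equals
\[
(\boldsymbol{v}^e\cdot\boldsymbol{n}^e_{E^l})(\boldsymbol{n}_E-\boldsymbol{n}^e_{E^l})+(\boldsymbol{v}^e\cdot\boldsymbol{\tau}^e_{E^l})(\boldsymbol{\tau}_E-\boldsymbol{\tau}^e_{E^l})
\]
plus a term of size $O(h^2)|\boldsymbol{v}^e|$ coming from $(\mu_h^E-1)$ by (\ref{m2}). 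By (\ref{eq10}) the full vector is only $O(h)$. However:
\begin{itemize}
\item applying $\mathbf{P}^e$ and using (\ref{eq11}) together with its $\boldsymbol{\tau}$-analogue gives $|\mathbf{P}^e\boldsymbol{\delta}_i|\le Ch^2|E_i|^{-1}\int_{E_i}|\boldsymbol{v}^e|$;
\item applying $\mathbf{P}_h$ and using (\ref{eq12}) (recall $\mathbf{P}_h\boldsymbol{n}_E=\boldsymbol{n}_E$) gives the same bound for $|\mathbf{P}_h\boldsymbol{\delta}_i|$.
\end{itemize}
The mismatch between $\mathbf{P}^e$ (which varies over $K$) and the basis functions can be handled using $\|\mathbf{P}^e-\mathbf{P}^e(m_E)\|_{L^\infty(K)}\le Ch$, with the normal parts controlled by (\ref{eq9}). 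Summing over $K$, together with $\|\phi_i\|_{L^2(K)}\le Ch$ and the trace inequality $|E|^{-1}\int_E|\boldsymbol{v}^e|\le Ch^{-1}\|\boldsymbol{v}^e\|_{\boldsymbol{H}^1(K)}$, yields $\|\mathbf{P}^e\boldsymbol{d}\|_{\boldsymbol{L}^2}+\|\mathbf{P}_h\boldsymbol{d}\|_{\boldsymbol{L}^2}\le Ch^2\|\boldsymbol{v}^e\|_{\boldsymbol{H}^1}$. With the triangle inequality this gives (\ref{I2}) and (\ref{I4}). The edge versions (\ref{I3}) and (\ref{I5}) follow in the same way from $\|\phi_i\|_{L^2(E)}\le Ch^{1/2}$ and the edge bound for $\Pi_h^{CR}$.

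For (\ref{I1}), note that $\nabla_{\Gamma_h}\boldsymbol{d}=\mathbf{P}_h\nabla\boldsymbol{d}\,\mathbf{P}_h$ involves only $\mathbf{P}_h\boldsymbol{\delta}_i$, because $\mathbf{P}_h$ is constant on $K$. An inverse estimate then gives $\|\nabla_{\Gamma_h}\boldsymbol{d}\|_{\boldsymbol{L}^2(K)}\le Ch^{-1}\|\mathbf{P}_h\boldsymbol{d}\|_{\boldsymbol{L}^2(K)}\le Ch\|\boldsymbol{v}^e\|_{\boldsymbol{H}^2}$. The term $\nabla_{\Gamma_h}(\boldsymbol{v}^e-\Pi_h^{CR}\boldsymbol{v}^e)$ is bounded by the classical estimate.

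For (\ref{I6}), the gradient part is (\ref{I1}). For the $L^2$ part, split $\boldsymbol{v}^e-\tilde{\Pi}_h^{\tn}\boldsymbol{v}$ into its $\mathbf{P}_h$-part, which is $O(h^2)$ by (\ref{I4}), and its normal part $(\mathbf{I}-\mathbf{P}_h)\boldsymbol{v}^e=(\mathbf{P}^e-\mathbf{P}_h)\boldsymbol{v}^e$, which is $O(h)$ by (\ref{eq9}); here we use that $\tilde{\Pi}_h^{\tn}\boldsymbol{v}$ is tangential to $\Gamma_h$. For the jump term, $\lJump\boldsymbol{v}^e\rJump=0$, so
\[
\frac{\alpha}{h}\|\lJump\tilde{\Pi}_h^{\tn}\boldsymbol{v}\rJump\|^2_{\boldsymbol{L}^2(E)}\le\frac{C}{h}\|\boldsymbol{v}^e-\tilde{\Pi}_h^{\tn}\boldsymbol{v}\|^2_{\boldsymbol{L}^2(\partial K^\pm)} .
\]
The tangential part of this edge quantity is $O(h^{3/2})$ by (\ref{I5}), contributing $O(h^2)$ to the squared norm.

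The main obstacle is the normal part $(\mathbf{I}-\mathbf{P}_h)\boldsymbol{v}^e$ on edges. Its $L^2(E)$ norm is only $O(h^{1/2})$, which after the $h^{-1}$ weight would destroy the rate. The remedy I would try first is to avoid the triangle inequality at this point. Instead, write $\lJump\tilde{\Pi}_h^{\tn}\boldsymbol{v}\rJump=\lJump\tilde{\Pi}_h^{\tn}\boldsymbol{v}-\boldsymbol{v}^e\rJump$ and decompose each one-sided trace in the basis $\{\boldsymbol{n}^\pm_E,\boldsymbol{\tau}_E,\boldsymbol{n}_h^\pm\}$. The normal parts $(\boldsymbol{v}^e\cdot\boldsymbol{n}^\pm_h)\boldsymbol{n}^\pm_h$ are smooth traces of the same function $\boldsymbol{v}^e$. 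Their jump has size $|\boldsymbol{v}^e|\,|\boldsymbol{n}_h^+-\boldsymbol{n}_h^-|\cdot O(h)$, which is $O(h^2)$ because $\boldsymbol{v}^e\cdot\boldsymbol{n}_h^\pm=O(h)$ and $|\boldsymbol{n}_h^+-\boldsymbol{n}_h^-|=O(h)$. This gives $\frac{1}{h}\|\cdot\|^2_{L^2(E)}\le Ch^2$ after summing over the $O(h^{-2})$ edges of total length $O(h^{-1})$. If that bookkeeping fails, I would fall back on measuring the jump only through its $\mathbf{P}^e$-projection, using (\ref{I3}).
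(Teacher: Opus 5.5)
Your proof of (\ref{I1})--(\ref{I5}) works. Comparing $\tilde{\Pi}_h^{\tn}\boldsymbol{v}$ with the componentwise Crouzeix--Raviart interpolant and bounding $\mathbf{P}^e\boldsymbol{\delta}_i$ and $\mathbf{P}_h\boldsymbol{\delta}_i$ through (\ref{eq11}), (\ref{eq12}) and (\ref{m2}) is a self-contained version of what the paper imports from Lemma 5.1 of \cite{mehlmann2023}. Your handling of the $L^2$ part of (\ref{I6}), splitting off the $O(h)$ normal part $(\mathbf{P}^e-\mathbf{P}_h)\boldsymbol{v}^e$, is more careful than the paper's direct appeal to (\ref{I4}).

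The gap is the jump term. Your bound for the jump of the normal parts drops the leading term of the product rule:
\[
(\boldsymbol{v}^e\cdot\boldsymbol{n}_h^+)\boldsymbol{n}_h^+-(\boldsymbol{v}^e\cdot\boldsymbol{n}_h^-)\boldsymbol{n}_h^-=(\boldsymbol{v}^e\cdot\boldsymbol{n}_h^+)(\boldsymbol{n}_h^+-\boldsymbol{n}_h^-)+\big(\boldsymbol{v}^e\cdot(\boldsymbol{n}_h^+-\boldsymbol{n}_h^-)\big)\boldsymbol{n}_h^- .
\]
Only the first term is $O(h^2)$. The vector $\boldsymbol{n}_h^+-\boldsymbol{n}_h^-$ has length $\approx\theta_E$, the angle between the two face normals, which is of exact order $h$ on, e.g., the sphere. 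To leading order it points along the conormal in the tangent plane of $\Gamma$. Hence $\boldsymbol{v}^e\cdot(\boldsymbol{n}_h^+-\boldsymbol{n}_h^-)\approx\pm\theta_E\,\boldsymbol{v}\cdot\boldsymbol{n}_{E^l}$ is $O(h)$, not $O(h^2)$: the scalar $\boldsymbol{v}^e\cdot\boldsymbol{n}_h$ itself jumps by $O(h)$. Each edge therefore contributes $\sim h^2$ to $\sum_E\frac{\alpha}{h}\|\lJump\cdot\rJump\|^2_{\boldsymbol{L}^2(E)}$, which is $O(1)$ after summing over the $O(h^{-2})$ edges.

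This cannot be repaired by a better splitting as long as $\lJump\cdot\rJump$ is the vector difference (\ref{J2}). For every $\boldsymbol{v}_h\in\mathbf{V}_h$, (\ref{J0}) gives the midpoint jump $\boldsymbol{v}_h^+(m_E)-\boldsymbol{v}_h^-(m_E)=a_E(\boldsymbol{n}_E^++\boldsymbol{n}_E^-)$, with $a_E=\boldsymbol{v}_h^+(m_E)\cdot\boldsymbol{n}_E^+$ and $|\boldsymbol{n}_E^++\boldsymbol{n}_E^-|\approx\theta_E$. Since the jump is linear on $E$, this is also its edge mean. For $\boldsymbol{v}_h=\tilde{\Pi}_h^{\tn}\boldsymbol{v}$, $a_E$ is a weighted edge average of $\boldsymbol{v}\cdot\boldsymbol{n}_{E^l}$. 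So the penalty part of $\|\boldsymbol{v}^e-\tilde{\Pi}_h^{\tn}\boldsymbol{v}\|_h^2$ does not tend to zero for generic $\boldsymbol{v}$. Your fallback, which controls only $\mathbf{P}^e\lJump\cdot\rJump$, bounds a different norm.

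The paper's proof gets around this by asserting $\lJump\bar{\boldsymbol{w}}\rJump=0$ ``by (\ref{J0})'' and then using trace, Poincar\'e and (\ref{I1}). But (\ref{J0}) only matches conormal and tangential coefficients. The assertion therefore holds only if the jump is read componentwise in the edge frames, as in (\ref{J1}): as $\boldsymbol{w}^+\cdot\boldsymbol{n}_E^++\boldsymbol{w}^-\cdot\boldsymbol{n}_E^-$ and $\boldsymbol{w}^+\cdot\boldsymbol{\tau}_E^++\boldsymbol{w}^-\cdot\boldsymbol{\tau}_E^-$. Even then, the edge mean for $\boldsymbol{w}=\boldsymbol{v}^e-\tilde{\Pi}_h^{\tn}\boldsymbol{v}$ is $O(h^2)$ by (\ref{eq11}) rather than zero.

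That componentwise reading is the missing idea, and with it your own estimates finish the job more directly than the paper's route. Since $\boldsymbol{n}_E^\pm$ and $\boldsymbol{\tau}_E^\pm$ are tangent to $K^\pm$, one has $\boldsymbol{w}^\pm\cdot\boldsymbol{n}_E^\pm=(\mathbf{P}_h^\pm\boldsymbol{w}^\pm)\cdot\boldsymbol{n}_E^\pm$, and likewise for $\boldsymbol{\tau}_E^\pm$. So the normal part you identified as the obstacle never enters. The elementwise form of (\ref{I5}) then bounds the componentwise penalty on $E$ by $Ch^2\|\boldsymbol{v}^e\|^2_{\boldsymbol{H}^2(K^+\cup K^-)}$, which sums to the required $O(h^2)$.
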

\begin{proof}
We observe that the interpolation operator $\tilde{\Pi}_h^{\tn}$ is a weighted variant of the interpolation estimate $\Pi_h^{tan}$, introduced in \cite{mehlmann2023},
\begin{align*}
   \Pi_h^{tan}\boldsymbol{v}|_K & = \sum^3_{i=1}  \phi_i\alpha_i^{tan}(\boldsymbol{v}^e),\\ 
   & \alpha_i^{tan}(\boldsymbol{v}^e)=\frac{1}{|E_i|}\int_{E_i} ( \boldsymbol{v} \cdot \boldsymbol{n}_{E_i^l})^e \boldsymbol{n}_{E_i} \, d\sigma_h + \frac{1}{|E_i|}\int_{E_i} ( \boldsymbol{v} \cdot \boldsymbol{\tau}_{E_i^l})^e  \boldsymbol{\tau}_{E_i} \, d\sigma_h. 
\end{align*}
Consequently, the derivations of the results (\ref{I1})-(\ref{I5}) follow the same lines as those in Lemma 5.1 of \cite{mehlmann2023}, where the weight is bounded by applying the result (\ref{m2}).\vspace{2mm}\\
\textbf{Derivation of (\ref{I6}):} \textcolor{black}{ Let $\boldsymbol{w}= (\boldsymbol{v}^e- \tilde{\Pi}_h^{\tn} \boldsymbol{v})$. Applying (\ref{I1}) on $\| \nabla_{\Gamma_h} \boldsymbol{w} \|_{\boldsymbol{L}^2(K)}$ and (\ref{I4}) on $\| \boldsymbol{w} \|_{\boldsymbol{L}^2(\Gamma_h)}$, we obtain
{\allowdisplaybreaks
\begin{align}
\label{31a}
   \|\boldsymbol{w} \|_h^2  
    & \leq C h^2 \| \boldsymbol{v}^e \|_{\boldsymbol{H}^2(\Gamma_h)}^2 + \sum_{E \in \mathcal{E}_h} \frac{\alpha}{h} \|\lJump \boldsymbol{w} \rJump \|_{\boldsymbol{L}^2(E)}^2.
\end{align}
}
To estimate the jump term, let $\bar{\boldsymbol{w}}= \frac{1}{\mid E \mid} \int_E \boldsymbol{w} \hspace{1mm} d \sigma_h$ denote the edge average of $\boldsymbol{w}$. By (\ref{J0}), we have $\lJump \bar{\boldsymbol{w}}\rJump=0$. Applying steps 2-5 used in derivation (\ref{36b}) yields the first bound. Further application of (\ref{I1}) leads to the final estimate.
\begin{equation}
\begin{split}
\label{31b}
    \frac{1}{h} \|\lJump \boldsymbol{w} \rJump \|_{\boldsymbol{L}^2(E)}^2 & = \frac{1}{h} \|\lJump \boldsymbol{w} - \bar{\boldsymbol{w}}\rJump \|_{\boldsymbol{L}^2(E)}^2 \\
    & \leq C \| \nabla_{\Gamma_h} (\boldsymbol{v}^e- \tilde{\Pi}_h^{\tn} \boldsymbol{v}) \|_{\boldsymbol{L}^2(\Gamma_h)}^2 \\
    & \leq C h^2 \| \boldsymbol{v}^e \|_{\boldsymbol{H}^1(\Gamma_h)}^2.
    \end{split}
\end{equation}
Hence, combining (\ref{31a}) and (\ref{31b}), we obtain the desired result. }
\end{proof}
For the pressure variable, we introduce the following interpolation operator. Let $I_h: L^2(\Gamma) \rightarrow Q_h$ denote the $L^2$-projection operator, defined by $$(I_h p)\mid_K = \frac{1}{| K |} \int_K p^e \mu_h \hspace{1mm} ds_h. $$
Moreover, we recall the corresponding interpolation estimate from \cite{li2014} (Theorem 3.6),
\begin{lemma}[Interpolation estimate]
   For any $p \in H^1(\Gamma)$, the following interpolation estimate holds 
   \begin{equation}
\label{I7}
    \| p^e - I_h p \|_{L^2(\Gamma_h)} \leq C h \| p^e \|_{H^1(\Gamma_h)}.
\end{equation}
\end{lemma}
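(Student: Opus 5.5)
The estimate \eqref{I7} is a standard Bramble--Hilbert type bound for the elementwise mean, combined with the measure-weight estimate \eqref{m1}. I would argue one element $K\in\mathcal{T}_h$ at a time and then sum.

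\textbf{Step 1: split into a mean-value error and a weight error.} On $K$ I introduce the unweighted mean $\bar p_K=\frac{1}{|K|}\int_K p^e\,ds_h$ and write
\[
p^e-I_hp=(p^e-\bar p_K)+\frac{1}{|K|}\int_K p^e(1-\mu_h)\,ds_h .
\]

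\textbf{Step 2: the mean-value error.} For the first term I use the Poincar\'e inequality on the planar, shape-regular triangle $K$. This follows by mapping to $\hat K$ with $F_K$ and using that constants are reproduced, which is the Bramble--Hilbert lemma. It gives
\[
\|p^e-\bar p_K\|_{L^2(K)}\le Ch\,\|\nabla_{\Gamma_h}p^e\|_{L^2(K)} .
\]
Since $p^e$ is a smooth extension and $K$ is flat, $\nabla_{\Gamma_h}p^e=\mathbf{P}_h\nabla p^e$ is bounded by $|p^e|_{H^1(K)}$.

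\textbf{Step 3: the weight error.} For the second term, Cauchy--Schwarz together with \eqref{m1} gives
\[
\Big\|\frac{1}{|K|}\int_K p^e(1-\mu_h)\Big\|_{L^2(K)}\le |K|^{1/2}\,|K|^{-1}\,Ch^2\,|K|^{1/2}\,\|p^e\|_{L^2(K)}=Ch^2\|p^e\|_{L^2(K)} .
\]
For $h$ small this is at most $Ch\|p^e\|_{L^2(K)}$.

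\textbf{Step 4: summation.} Squaring the element bounds and summing over $K\in\mathcal{T}_h$ yields $\|p^e-I_hp\|_{L^2(\Gamma_h)}\le Ch\|p^e\|_{H^1(\Gamma_h)}$.

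\textbf{Zero-mean subtlety.} $Q_h\subset L^2_0(\Gamma_h)$, so $I_hp$ must have zero mean on $\Gamma_h$. We have $\int_{\Gamma_h}I_hp\,ds_h=\int_{\Gamma_h}p^e\mu_h\,ds_h=\int_\Gamma p\,ds=0$ for $p\in L^2_0(\Gamma)$, so the weight $\mu_h$ makes $I_hp\in Q_h$ automatically. If a mean correction were needed in other settings, its size would be $O(h^2)\|p\|$ by \eqref{m1} and would not affect the rate.

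The only mildly delicate point is ensuring the Poincar\'e constant is uniform in $K$. This follows from shape regularity via the affine map, as in the cited Theorem 3.6 of \cite{li2014}.
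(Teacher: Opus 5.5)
Your proof is correct. Note that the paper gives no argument of its own here; it imports the bound from Theorem 3.6 of the cited reference, a result for the standard elementwise projection onto piecewise constants.

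Your route is self-contained. You bound $p^e-\bar p_K$ by the Poincar\'e--Wirtinger inequality on each flat triangle. For convex $K$ its constant is $\mathrm{diam}(K)/\pi$, so uniformity does not even need shape regularity. You then bound the weight perturbation $\bar p_K-I_hp$ by $Ch^2\|p^e\|_{L^2(K)}$ using (\ref{m1}).

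This buys something over the bare citation. It accounts for the factor $\mu_h$ in the definition of $I_h$, which a flat-domain projection result does not cover directly. Your identity $\int_{\Gamma_h} I_hp\,ds_h=\int_\Gamma p\,ds$ also confirms $I_hp\in Q_h$ whenever $p\in L^2_0(\Gamma)$. That membership is exactly what the energy estimate needs when it uses (\ref{I7}) to bound $\inf_{q_h\in Q_h}\|p^e-q_h\|_{L^2(\Gamma_h)}$.

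Two small corrections of wording:
\begin{itemize}
\item For $p\in H^1(\Gamma)$ the extension $p^e$ is only $H^1$, not smooth. It is constant along normals, and that is what gives $p^e|_K\in H^1(K)$ with $\|\nabla_{\Gamma_h}p^e\|_{L^2(K)}$ controlled.
\item The inequality itself holds for every $p\in H^1(\Gamma)$, whatever its mean. Your zero-mean discussion concerns only membership in $Q_h$, not the estimate.
\end{itemize}
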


\subsection{Energy-error estimation}
The coercivity of the bilinear form $a_h(\boldsymbol{u}_h, \boldsymbol{v}_h)$ together with the inf-sup compatibility condition for the discrete pair, implies that the discrete formulation (\ref{eq13}) admits a unique solution $\boldsymbol{u}_h \in \textbf{V}_h$ and $p_h \in Q_h$. These coercive and inf-sup stability properties of the bilinear terms further ensure the validity of the \textcolor{black}{Second Strang Lemma} [\cite{li2014}, eq. (2.11)], which is subsequently used to derive the \textcolor{black}{a-priori} error estimate in the energy norm. 
\begin{theorem}[Strang's Second Lemma]
\label{thm2}
 Let $(\boldsymbol{u},p) \in \boldsymbol{H}^2_{\tn}(\Gamma) \times H^1(\Gamma) \cap L_0^2(\Gamma)$ be the solution of (2) and $(\boldsymbol{u}^e,p^e)$ be their extensions. For the discrete solution $(\boldsymbol{u}_h,p_h) \in \mathbf{V}_h \times Q_h$ satisfying (\ref{eq13}), the following estimate holds:
   \begin{equation}
        \begin{split}
        \label{eq45}
            \|\boldsymbol{u}^e- \boldsymbol{u}_h \|_h + \|p^e-p_h \|_{L^2(\Gamma_h)} & \leq C \Bigg( \underbrace{  \underset{\boldsymbol{v}_h \in \mathbf{V}_h}{\mathrm{inf}}  \|\boldsymbol{u}^e- \boldsymbol{v}_h \|_h +  \underset{q_h \in Q_h}{\mathrm{inf}} \| p^e-q_h \|_{L^2(\Gamma_h)} }_{\text{approximation error}}\\
    & \quad + \underbrace{\underset{\boldsymbol{v}_h \in \mathbf{V}_h}{\mathrm{sup}} \frac{\mid a_h(\boldsymbol{u}^e, \boldsymbol{v}_h) + b_h(\boldsymbol{v}_h, p^e)- (\boldsymbol{f}^e, \boldsymbol{v}_h)  \mid}{\| \boldsymbol{v}_h \|_h}}_{\text{nonconformity error}}  \Bigg).
        \end{split}
    \end{equation}
\end{theorem}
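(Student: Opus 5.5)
We follow the standard proof of the second Strang lemma for saddle-point problems, applied on the broken space $\boldsymbol{V}(h):=\mathbf{V}_h+\{\boldsymbol{u}^e\}$ equipped with $\|\cdot\|_h$. This norm makes sense for $\boldsymbol{u}^e$ because $\boldsymbol{u}\in\boldsymbol{H}^2_{\tn}(\Gamma)$ implies $\lJump \boldsymbol{u}^e\rJump=0$ on every edge. The ingredients are coercivity of $a_h$ on $\mathbf{V}_h$ (the discrete Korn inequality quoted before Theorem \ref{thm1}), continuity of $a_h$ and $b_h$ on $\boldsymbol{V}(h)\times\boldsymbol{V}(h)$ and $\boldsymbol{V}(h)\times L^2(\Gamma_h)$, and the discrete inf-sup condition of Theorem \ref{thm1}.

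Fix arbitrary $\boldsymbol{v}_h\in\mathbf{V}_h$ and $q_h\in Q_h$, and set $\boldsymbol{e}_h=\boldsymbol{u}_h-\boldsymbol{v}_h$ and $r_h=p_h-q_h$. Define the consistency functional
\[
\mathcal{R}(\boldsymbol{w}_h)=a_h(\boldsymbol{u}^e,\boldsymbol{w}_h)+b_h(\boldsymbol{w}_h,p^e)-(\boldsymbol{f}^e,\boldsymbol{w}_h).
\]
Subtracting the discrete equations (\ref{eq13}) gives the error equations for all $\boldsymbol{w}_h\in\mathbf{V}_h$ and $s_h\in Q_h$:
\begin{align*}
a_h(\boldsymbol{e}_h,\boldsymbol{w}_h)+b_h(\boldsymbol{w}_h,r_h)&=a_h(\boldsymbol{u}^e-\boldsymbol{v}_h,\boldsymbol{w}_h)+b_h(\boldsymbol{w}_h,p^e-q_h)-\mathcal{R}(\boldsymbol{w}_h),\\
b_h(\boldsymbol{e}_h,s_h)&=b_h(\boldsymbol{u}^e-\boldsymbol{v}_h,s_h)-b_h(\boldsymbol{u}^e,s_h).
\end{align*}

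The term $b_h(\boldsymbol{u}^e,s_h)$ has to be removed. By the divergence-free constraint, $b(\boldsymbol{u},s_h^l)=0$. However, $b_h(\boldsymbol{u}^e,s_h)$ only agrees with it up to the geometric perturbation of Lemma \ref{lemdiv}, which is $O(h^2)$. There are two ways to handle this. The first is to choose $\boldsymbol{v}_h=\tilde{\Pi}_h^{\tn}\boldsymbol{u}$, because Lemma \ref{lem5a} gives $b_h(\tilde{\Pi}_h^{\tn}\boldsymbol{u},s_h)=b(\boldsymbol{u},s_h^l)=0$ exactly, so $\boldsymbol{e}_h$ is discretely divergence free. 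The second is to absorb the residual into the constant, since it is of higher order. We plan to use the first route, and then bound $\inf_{\boldsymbol{v}_h}$ by the $\tilde{\Pi}_h^{\tn}$ choice. This is legitimate up to constants, because the infimum is comparable to the interpolation error of (\ref{I6}). To recover the inf exactly, we instead run the standard argument: split $\boldsymbol{e}_h$ into a discretely divergence-free part and a correction. The correction is controlled by $\|\boldsymbol{u}^e-\boldsymbol{v}_h\|_h$ through the inf-sup condition, since the inf-sup property gives a right inverse of $b_h$ bounded in $\|\cdot\|_h$.

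With $\boldsymbol{e}_h$ (after correction) discretely divergence free, we test the first error equation with $\boldsymbol{w}_h=\boldsymbol{e}_h$. Then $b_h(\boldsymbol{e}_h,r_h)=0$, and coercivity together with continuity gives
\[
\|\boldsymbol{e}_h\|_h\le C\bigl(\|\boldsymbol{u}^e-\boldsymbol{v}_h\|_h+\|p^e-q_h\|_{L^2(\Gamma_h)}+\sup_{\boldsymbol{w}_h}|\mathcal{R}(\boldsymbol{w}_h)|/\|\boldsymbol{w}_h\|_h\bigr).
\]
For the pressure, Theorem \ref{thm1} gives
\[
\|r_h\|_{L^2(\Gamma_h)}\le C\sup_{\boldsymbol{w}_h} b_h(\boldsymbol{w}_h,r_h)/\|\boldsymbol{w}_h\|_h .
\]
We substitute the first error equation into this and bound each term by continuity, using the estimate for $\|\boldsymbol{e}_h\|_h$. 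The pressures require care: $p^e-q_h\notin L^2_0(\Gamma_h)$ in general. This is harmless, because $b_h(\boldsymbol{w}_h,1)=0$ for Crouzeix–Raviart functions on a closed polyhedron, by the integration by parts argument in Lemma \ref{lem5a} together with (\ref{J1}). Hence constants may be shifted freely. A triangle inequality then adds $\|\boldsymbol{u}^e-\boldsymbol{v}_h\|_h+\|p^e-q_h\|$, and taking infima over $\boldsymbol{v}_h$ and $q_h$ gives (\ref{eq45}).

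The main obstacle is the consistency of the constraint equation, i.e.\ the term $b_h(\boldsymbol{u}^e,s_h)\neq 0$ arising from geometry and nonconformity. We will resolve it via Lemma \ref{lem5a} and the interpolant $\tilde{\Pi}_h^{\tn}$ as described above. If needed, we will state that the inf over $\boldsymbol{v}_h$ is realized up to constants by $\tilde{\Pi}_h^{\tn}\boldsymbol{u}$.
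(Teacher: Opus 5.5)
Your plan is the standard argument behind the abstract second Strang lemma. The paper gives no proof of its own: it cites \cite{li2014}, eq.~(2.11), on the basis of coercivity of $a_h$ and Theorem~\ref{thm1}. You correctly identify what that citation passes over: the discrete constraint is inconsistent, since $b_h(\boldsymbol{u}^e,s_h)\neq 0$ in general because of the geometric error. None of your ways of disposing of this term proves (\ref{eq45}) as stated. In the ``standard argument'' the correction $\boldsymbol{r}_h$ must satisfy $b_h(\boldsymbol{r}_h,s_h)=b_h(\boldsymbol{u}^e-\boldsymbol{v}_h,s_h)-b_h(\boldsymbol{u}^e,s_h)$ for all $s_h\in Q_h$. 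The inf-sup condition therefore only gives
\[
\|\boldsymbol{r}_h\|_h\le C\Bigl(\|\boldsymbol{u}^e-\boldsymbol{v}_h\|_h+\sup_{s_h\in Q_h}\frac{|b_h(\boldsymbol{u}^e,s_h)|}{\|s_h\|_{L^2(\Gamma_h)}}\Bigr),
\]
and the last term is not among the terms on the right of (\ref{eq45}). Your claim that the correction is controlled by $\|\boldsymbol{u}^e-\boldsymbol{v}_h\|_h$ alone does not hold. ``Absorbing the residual into the constant'' is not legitimate either, for two reasons. First, an additive term cannot be absorbed into the multiplicative constant of an estimate $\mathrm{LHS}\le C\,\mathrm{RHS}$. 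Second, the term is not of higher order: Lemma~\ref{lemdiv} needs $q\in H^1(\Gamma)$ and does not apply to the piecewise constant $s_h^l$. What you actually get from (\ref{8a}) and $\mathrm{div}_\Gamma\boldsymbol{u}=0$ is only $|b_h(\boldsymbol{u}^e,s_h)|\le Ch\|\boldsymbol{u}^e\|_{\boldsymbol{H}^1(\Gamma_h)}\|s_h\|_{L^2(\Gamma_h)}$.

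Your first route is correct but proves a different inequality. With $\boldsymbol{v}_h=\tilde{\Pi}_h^{\tn}\boldsymbol{u}$, Lemma~\ref{lem5a} gives $b_h(\tilde{\Pi}_h^{\tn}\boldsymbol{u},s_h)=b(\boldsymbol{u},s_h^l)=0$. (Here $s_h^l$ need not have mean zero on $\Gamma$, but $b(\boldsymbol{u},1)=0$ on a closed surface.) So $\boldsymbol{u}_h-\tilde{\Pi}_h^{\tn}\boldsymbol{u}$ is discretely divergence free, and the rest of your argument yields (\ref{eq45}) with $\|\boldsymbol{u}^e-\tilde{\Pi}_h^{\tn}\boldsymbol{u}\|_h$ in place of $\inf_{\boldsymbol{v}_h}\|\boldsymbol{u}^e-\boldsymbol{v}_h\|_h$. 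Since the infimum is the smaller quantity, this is weaker than the statement. Recovering the infimum needs the quasi-optimality bound $\|\boldsymbol{u}^e-\tilde{\Pi}_h^{\tn}\boldsymbol{u}\|_h\le C\inf_{\boldsymbol{v}_h\in\mathbf{V}_h}\|\boldsymbol{u}^e-\boldsymbol{v}_h\|_h$. You assert this (``the infimum is comparable to the interpolation error''), but it does not follow from (\ref{I6}), which is an upper bound only, and nothing in the paper provides it. There are two clean repairs:
\begin{itemize}
\item State and prove the version with $\|\boldsymbol{u}^e-\tilde{\Pi}_h^{\tn}\boldsymbol{u}\|_h$, which is exactly what Theorem~\ref{thm3} uses in (\ref{eq57}).
\item Alternatively, state the saddle-point Strang lemma with the additional constraint-consistency term $\sup_{q_h\in Q_h}|b_h(\boldsymbol{u}^e,q_h)|/\|q_h\|_{L^2(\Gamma_h)}$. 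By Lemma~\ref{lem5a} this equals $\sup_{q_h}|b_h(\boldsymbol{u}^e-\tilde{\Pi}_h^{\tn}\boldsymbol{u},q_h)|/\|q_h\|_{L^2(\Gamma_h)}\le C\|\boldsymbol{u}^e-\tilde{\Pi}_h^{\tn}\boldsymbol{u}\|_h$, and hence is $O(h)$ by (\ref{I6}).
\end{itemize}
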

The approximation error is estimated directly using the interpolation estimates (\ref{I6}) and (\ref{I7}), as will be shown in Theorem \ref{thm3}. The primary difficulty lies in the estimation of the nonconformity error, which requires the control of several geometric error terms as well as jump contributions. In the following lemma, we present the estimation for the nonconformity error, whose proof relies on the auxiliary results established subsequently in this section.
\begin{lemma}[Nonconformity Error Estimate] \label{lem12}
     Let $(\boldsymbol{u},p) \in \boldsymbol{H}^2_{\tn}(\Gamma) \times H^1(\Gamma) \cap L_0^2(\Gamma)$ be the solution of (2) and let $(\boldsymbol{u}^e,p^e)$ be their extensions. For $\boldsymbol{f} \in \boldsymbol{L}^2_{\text{tan}}(\Gamma)$ and any $\boldsymbol{v}_h \in \mathbf{V}_h$, the following error estimate holds:
    \begin{equation}
    \label{eq53}
       | a_h(\boldsymbol{u}^e, \boldsymbol{v}_h) + b_h(\boldsymbol{v}_h, p^e)- (\boldsymbol{f}^e, \boldsymbol{v}_h) | \leq C h  \| \boldsymbol{f} \|_{\boldsymbol{L}^2(\Gamma)} \| \boldsymbol{v}_h \|_h,
    \end{equation}
    where $\boldsymbol{f}^e$ is the extension of $\boldsymbol{f}$ to $\Gamma_h$.
\end{lemma}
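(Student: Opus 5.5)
The plan is to reduce the nonconformity functional to a sum of geometric consistency terms and edge terms. Each will be bounded by $Ch(\|\boldsymbol{u}\|_{\boldsymbol{H}^2_{\tn}(\Gamma)}+\|p\|_{H^1(\Gamma)})\|\boldsymbol{v}_h\|_h$, and the regularity estimate (\ref{eq8}) of Lemma \ref{lem3} then converts this into $Ch\|\boldsymbol{f}\|_{\boldsymbol{L}^2(\Gamma)}\|\boldsymbol{v}_h\|_h$.

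\textbf{Step 1 (jump term of $a_h$).} Since $\boldsymbol{u}\in\boldsymbol{H}^2_{\tn}(\Gamma)$, its extension $\boldsymbol{u}^e$ is continuous across every edge. Hence $\lJump \boldsymbol{u}^e\rJump=0$ and the penalty term in $a_h(\boldsymbol{u}^e,\boldsymbol{v}_h)$ vanishes. We are left with
\[
\sum_K (E_{\Gamma_h}(\boldsymbol{u}^e),E_{\Gamma_h}(\boldsymbol{v}_h))_K+(\boldsymbol{u}^e,\boldsymbol{v}_h)_{\Gamma_h}+b_h(\boldsymbol{v}_h,p^e)-(\boldsymbol{f}^e,\boldsymbol{v}_h)_{\Gamma_h}.
\]

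\textbf{Step 2 (elementwise integration by parts on $\Gamma_h$).} On each flat $K$, I integrate by parts in the strain and pressure terms. This produces volume terms $-(\mathbf{P}_h\mathrm{div}_{\Gamma_h}E_{\Gamma_h}(\boldsymbol{u}^e)+\nabla_{\Gamma_h}p^e,\boldsymbol{v}_h)_K$ and edge terms $\sum_E\int_E \lJump (E_{\Gamma_h}(\boldsymbol{u}^e)\boldsymbol{n}_E+p^e\boldsymbol{n}_E)\cdot\boldsymbol{v}_h\rJump\,d\sigma_h$.

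For the volume part, I compare with the strong equation (\ref{eq1}) lifted to $\Gamma_h$. Using $\mathbf{P}^e\boldsymbol{v}_h$, $\mathbf{P}^e$ versus $\mathbf{P}_h$ (estimate (\ref{eq9})), the operator-difference estimate (\ref{8a}), and $\mu_h$ from (\ref{m1}), the residual is bounded by $Ch(\|\boldsymbol{u}\|_{\boldsymbol{H}^2}+\|p\|_{H^1}+\|\boldsymbol{f}\|)\|\boldsymbol{v}_h\|_{\boldsymbol{L}^2}$. Here $\nabla_{\Gamma_h}$ of the extensions differs from $(\nabla_\Gamma\cdot)^e$ by $O(h)$ times $H^2$ quantities.

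An alternative I may use instead is to lift: write $(\boldsymbol{f}^e,\boldsymbol{v}_h)_{\Gamma_h}$ through the exact equation tested with a lifted, tangentialised $(\mathbf{P}\boldsymbol{v}_h^l)$ elementwise on $K^l$. The difference between discrete and exact bilinear forms is then controlled via (\ref{I8}), (\ref{8a}) and (\ref{m1}) at order $h$. The price is that the exact integration by parts on curved $K^l$ produces the edge terms on $E^l$ with $\boldsymbol{n}_{E^l}$. Either route yields volume errors of order $h$; I will take whichever keeps the edge terms cleaner, most likely the first.

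\textbf{Step 3 (edge terms, the main obstacle).} Let $\boldsymbol{\sigma}=E_{\Gamma_h}(\boldsymbol{u}^e)+p^e\mathbf{I}$, which is single-valued up to $O(h)$ geometric effects. The difficulty is that $\boldsymbol{n}_E^+\neq-\boldsymbol{n}_E^-$, so the standard Crouzeix--Raviart trick (mean-zero jumps (\ref{J1})) does not apply directly. I would write $\boldsymbol{n}_E^\pm=\boldsymbol{n}_{E^l}^{e\pm}+(\boldsymbol{n}_E^\pm-\mathbf{P}_h\boldsymbol{n}_{E^l}^{e\pm})+(\mathbf{P}_h-\mathbf{I})\boldsymbol{n}_{E^l}^{e\pm}$. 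Using $\boldsymbol{n}_{E^l}^{+}=-\boldsymbol{n}_{E^l}^{-}$, the leading part becomes $\int_E \boldsymbol{\sigma}\boldsymbol{n}^{e+}_{E^l}\cdot\lJump\boldsymbol{v}_h\rJump$. Subtracting the edge mean of $\boldsymbol{\sigma}\boldsymbol{n}_{E^l}^{e+}$ (allowed since $\lJump\bar{\boldsymbol{v}}_h\rJump=0$ by (\ref{J0})), I bound it by $\|\boldsymbol{\sigma}-\bar{\boldsymbol{\sigma}}\|_{L^2(E)}\|\lJump\boldsymbol{v}_h\rJump\|_{L^2(E)}$. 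Then the trace and Poincaré inequalities give
\[
h^{1/2}\big(|\boldsymbol{u}^e|_{\boldsymbol{H}^2(K)}+|p^e|_{H^1(K)}\big)\cdot h^{1/2}\big(h^{-1/2}\|\lJump\boldsymbol{v}_h\rJump\|_{L^2(E)}\big),
\]
which sums to $Ch(\ldots)\|\boldsymbol{v}_h\|_h$. Alternatively, the jump can be estimated by $h^{1/2}\|\nabla_{\Gamma_h}\boldsymbol{v}_h\|$ as in (\ref{36b}).

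For the remainders, (\ref{eq12}) gives $O(h^2)$. The term $(\mathbf{P}_h-\mathbf{I})\boldsymbol{n}_{E^l}^{e}$ is $O(h)$ by (\ref{9a}), but it is normal to $K$. Paired with the tangential $\boldsymbol{v}_h$ it contributes only through the non-tangential part of $\boldsymbol{\sigma}$, which is itself $O(h)$. Without this extra factor, the crude bound is $h\|\boldsymbol{\sigma}\|_{L^2(E)}\|\boldsymbol{v}_h\|_{L^2(E)}\le Ch\cdot h^{-1}(\|\boldsymbol{\sigma}\|_{L^2(K)}+h|\boldsymbol{\sigma}|_{H^1(K)})\|\boldsymbol{v}_h\|_{L^2(K)}$ summed, which is only $O(1)$. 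So I must exploit either the $O(h^2)$ estimates (\ref{eq11})--(\ref{eq12}) or the tangentiality $\boldsymbol{v}_h\cdot\boldsymbol{n}_h=0$ to gain a factor $h$; this accounting is the delicate point.

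Collecting Steps 1--3 and applying (\ref{eq8}) yields (\ref{eq53}).
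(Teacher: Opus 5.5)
\textbf{There is a genuine gap in the route you commit to} (integrating by parts on the flat elements), and it sits in Step~2. You integrate by parts with $E_{\Gamma_h}(\boldsymbol u^e)$ itself. You then claim that $\mathbf P_h\mathrm{div}_{\Gamma_h}E_{\Gamma_h}(\boldsymbol u^e)$ agrees with $(\mathbf P\,\mathrm{div}_\Gamma E_\Gamma(\boldsymbol u))^e$ up to $O(h)$. The mismatch is in fact $O(1)$.

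Estimate (\ref{8a}) is a first-order statement: it only says that $E_{\Gamma_h}(\boldsymbol u^e)-(E_\Gamma(\boldsymbol u))^e$ is $O(h)$ in size. That difference contains a term of the form $\mathbf P_h\boldsymbol n^e\,((\mathbf H\boldsymbol u)^e)^T\mathbf P_h$, with $\mathbf H=\nabla^2 d$. Its derivative along $K$ is $O(1)$, because $\nabla(\mathbf P_h\boldsymbol n^e)=\mathbf P_h\mathbf H$. Geometrically, a flat element carries no curvature, so the elementwise operator misses the zeroth-order curvature terms of the covariant one.

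A concrete check: take $\boldsymbol u=\boldsymbol e_3\times\boldsymbol x$ on the unit sphere, which solves (\ref{eq1}) with $p=0$ and $\boldsymbol f=\boldsymbol u$.
\begin{itemize}
\item $E_\Gamma(\boldsymbol u)=0$.
\item However, $E_{\Gamma_h}(\boldsymbol u^e)=-\frac{1}{2|\boldsymbol x|}(\boldsymbol a\boldsymbol b^T+\boldsymbol b\boldsymbol a^T)$ with $\boldsymbol a=\mathbf P_h\boldsymbol n^e=O(h)$ and $\boldsymbol b=\mathbf P_h\boldsymbol u^e$.
\item A direct computation gives $\mathrm{div}_{\Gamma_h}E_{\Gamma_h}(\boldsymbol u^e)=-\frac32\boldsymbol u^e+O(h)$ on every element.
\end{itemize}
Hence your volume residual equals $\frac32(\boldsymbol u^e,\boldsymbol v_h)_{\Gamma_h}$ up to $O(h)$.

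The compensating $O(1)$ sits exactly where you treat things as negligible in Step~3. There, $\boldsymbol\sigma|_{K^+}$ and $\boldsymbol\sigma|_{K^-}$ differ pointwise by $O(h)$ because $\mathbf P_h^+\neq\mathbf P_h^-$. An $O(h)$ edge integrand summed over all edges (total length $\sim h^{-1}$) is $O(1)$; here that size is actually attained and is what cancels the volume defect. No tangentiality argument can remove it. So volume and edge parts cannot each be bounded by $O(h)$, and dropping the two-sidedness of $\boldsymbol\sigma$ when you pass to $\int_E\boldsymbol\sigma\boldsymbol n_{E^l}^{e+}\cdot\lJump\boldsymbol v_h\rJump$ is not justified.

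By contrast, the remainder you flag as delicate, $(\mathbf P_h-\mathbf I)\boldsymbol n_{E^l}^{e\pm}$, vanishes identically. It is parallel to $\boldsymbol n_h$, and both $E_{\Gamma_h}(\boldsymbol u^e)\boldsymbol n_h=\boldsymbol 0$ and $\boldsymbol v_h\cdot\boldsymbol n_h=0$.

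\textbf{The fix is to never differentiate $E_{\Gamma_h}(\boldsymbol u^e)$.} There are two ways to do this.

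\emph{Option 1 (repair your flat route).} First replace $E_{\Gamma_h}(\boldsymbol u^e)$ by $(E_\Gamma(\boldsymbol u))^e$ in the volume term. This costs $O(h)$ by (\ref{8a}), exactly as for $\text{T}_6$ in the paper. Integrate by parts on the flat $K$ only afterwards. The edge stress is then single-valued and $\Gamma$-tangential, the volume residual really is $O(h)$, and your Step~3 goes through with (\ref{eq11}).

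One caveat: (\ref{J0}) only matches the $\boldsymbol n_E$- and $\boldsymbol\tau_E$-components at $m_E$. So $\lJump\bar{\boldsymbol v}_h\rJump=(\bar{\boldsymbol v}_h^+\cdot\boldsymbol n_E^+)(\boldsymbol n_E^++\boldsymbol n_E^-)$ need not vanish. This is harmless: tested against the $\Gamma$-tangential vectors $(E_\Gamma(\boldsymbol u))^e\boldsymbol n_{E^l}^{e+}$ and $p^e\boldsymbol n_{E^l}^{e+}$, it costs only $O(h^2)$, again by (\ref{eq11}).

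\emph{Option 2 (the paper's proof).} This is the alternative you mention yourself. Add and subtract $(\boldsymbol f,\boldsymbol v_h^l)_\Gamma$, insert the strong form, and integrate by parts on the curved $K^l$. The exact operator is then used on $\Gamma$ itself, and only first-order bilinear-form differences remain:
\begin{itemize}
\item $\text{T}_1$, bounded by the geometric $L^2$-estimate;
\item $\text{T}_2$, bounded by (\ref{eq51});
\item $\text{T}_3$, bounded by (\ref{eq52}).
\end{itemize}
The edge terms $\text{T}_4$ and $\text{T}_5$ involve single-valued $E_\Gamma(\boldsymbol u)$ and $p$ and exactly opposite conormals $\boldsymbol n_{E^l}^\pm$, and are handled by Lemma~\ref{lem9}. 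Contrary to your guess, the lifted route is the one with the cleaner edge terms. Your tangentialised test function $\mathbf P\boldsymbol v_h^l$ is, if anything, more careful than the paper's direct use of $\boldsymbol v_h^l$.
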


\begin{proof}
    Upon adding and subtracting $(\boldsymbol{f}, \boldsymbol{v}_h^l)$ in the second line, replacing $\boldsymbol{f}$ by $( -  \textbf{P} \text{div}_{\Gamma} (E_{\Gamma} (\boldsymbol{u})) + \boldsymbol{u} - \nabla_{\Gamma} p)$ in the third line, and applying the Green's formula to the third expression in the third line, we obtain
   \begin{align*}
        & a_h(\boldsymbol{u}^e, \boldsymbol{v}_h) + b_h(\boldsymbol{v}_h, p^e)- (\boldsymbol{f}^e, \boldsymbol{v}_h) \\
    & = a_h(\boldsymbol{u}^e, \boldsymbol{v}_h) + b_h(\boldsymbol{v}_h, p^e)-(\boldsymbol{f}, \boldsymbol{v}_h^l)+ \underbrace{(\boldsymbol{f}, \boldsymbol{v}_h^l)- (\boldsymbol{f}^e, \boldsymbol{v}_h)}_{\text{T}_1} \\
    & = a_h(\boldsymbol{u}^e, \boldsymbol{v}_h) + b_h(\boldsymbol{v}_h, p^e) - ( -  \textbf{P} \text{div}_{\Gamma} (E_{\Gamma} (\boldsymbol{u})) + \boldsymbol{u} - \nabla_{\Gamma} p, \boldsymbol{v}_h^l) + \text{T}_1 \\
    & = \text{T}_1 + \underbrace{a_h(\boldsymbol{u}^e, \boldsymbol{v}_h)  - a(\boldsymbol{u}, \boldsymbol{v}_h^l)}_{\text{T}_2} + \underbrace{b_h(\boldsymbol{v}_h, p^e)- b(\boldsymbol{v}_h^l, p)}_{\text{T}_3} +  \\
    & \quad  \underbrace{ \sum_{E^l} \int_{E^l} \left( E_{\Gamma}(\boldsymbol{u}) \boldsymbol{n}_{E^l}^+ \cdot \boldsymbol{v}_h^{l+}  + E_{\Gamma}(\boldsymbol{u}) \boldsymbol{n}_{E^l}^- \cdot \boldsymbol{v}_h^{l-}  \right) \hspace{1mm} d \sigma }_{\text{T}_4} + \numberthis \label{eq54} \\
    & \quad  \underbrace{ \sum_{E^l} \int_{E^l} p \left(  \boldsymbol{n}_{E^l}^+ \cdot \boldsymbol{v}_h^{l+}  + \boldsymbol{n}_{E^l}^- \cdot \boldsymbol{v}_h^{l-}  \right) \hspace{1mm} d \sigma }_{\text{T}_5}  = \sum_{i=1}^5 \text{T}_i.
   \end{align*}
\textcolor{black}{The estimates for the geometric error terms T$_i$, for $i$=1,2,3, are provided in Lemma \ref{I9}, equation (\ref{eq51}) of Lemma \ref{lem10} and equation (\ref{eq52}) of Lemma \ref{lem11}, respectively.}
Employing $\boldsymbol{n}_{E^l}^+ =- \boldsymbol{n}_{E^l}^- $, we observe that the terms $\text{T}_4$ and $\text{T}_5$ contain jump terms as follows:
\begin{align*}
    \text{T}_4 &= \sum_{E^l \in \mathcal{E}_h^l}\int_{E^l} E_{\Gamma}(\boldsymbol{u}) \boldsymbol{n}_{E^l}^+ \cdot \lJump \boldsymbol{v}_h^l \rJump \hspace{1mm} d \sigma \\
     & = \frac{1}{2} \sum_{E^l \in \mathcal{E}_h^l} \int_{E^l} (\nabla_{\Gamma} \boldsymbol{u}) \boldsymbol{n}_{E^l}^+ \cdot \lJump \boldsymbol{v}_h^l \rJump \hspace{1mm} d \sigma + \frac{1}{2} \sum_{E^l \in \mathcal{E}_h^l}\int_{E^l} (\nabla_{\Gamma} \boldsymbol{u})^T \boldsymbol{n}_{E^l}^+ \cdot \lJump \boldsymbol{v}_h^l \rJump \hspace{1mm} d \sigma,\\
    \text{T}_5 & = \sum_{E^l \in \mathcal{E}_h^l} \int_{E^l} p \boldsymbol{n}_{E^l}^+ \cdot \lJump \boldsymbol{v}_h^l \rJump \hspace{1mm} d \sigma.
\end{align*}
The presence of $(\nabla_{\Gamma} \boldsymbol{u})^T$ does not affect the derivation in a substantial way, since $\|\nabla_{\Gamma} \boldsymbol{u}\|_{\boldsymbol{L}^2(\Gamma)} = \| (\nabla_{\Gamma} \boldsymbol{u})^T \|_{\boldsymbol{L}^2(\Gamma)}$. Hence, it suffices to estimate one of the representative terms in $\text{T}_4$. Lemma \ref{lem9} provides the estimates of the jump terms in T$_4$ and T$_5$. 
\textcolor{black}{Combining the estimates for T$_1$ to T$_5$ in (\ref{eq54}) yields (\ref{eq53}).}
\end{proof}
\begin{lemma}[Geometric error of the momentum equation] \label{lem10} 
    Let $\boldsymbol{u} \in \boldsymbol{H}^2_{\tn}(\Gamma)$ be the velocity solution of (2) and $\boldsymbol{u}^e$ be its extension. Then, for any $\boldsymbol{v}_h \in \mathbf{V}_h$, the following estimation holds:
    \begin{equation}
    \label{eq51}
       | a_h(\boldsymbol{u}^e, \boldsymbol{v}_h) - a(\boldsymbol{u}, \boldsymbol{v}_h^l) | \leq C h \|\boldsymbol{f}\|_{\boldsymbol{L}^2(\Gamma)} \| \boldsymbol{v}_h \|_h.
    \end{equation}
    Furthermore, for $\boldsymbol{w} \in \boldsymbol{H}^2_{\tn}(\Gamma)$, with $\boldsymbol{w}^e$ denoting its extension, the following estimate holds:
    \begin{equation}
    \label{eq51a}
       | a_h(\boldsymbol{u}^e, \boldsymbol{w}^e) - a(\boldsymbol{u}, \boldsymbol{w}) | \leq C h^2 \|\boldsymbol{u}\|_{\boldsymbol{H}^2_{\tn}(\Gamma)} \| \boldsymbol{w} \|_{\boldsymbol{H}^2_{\tn}(\Gamma)}.
    \end{equation}
\end{lemma}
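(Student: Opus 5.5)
I would split the difference into three parts, matching the three pieces of $a_h$:
\[
a_h(\boldsymbol{u}^e,\boldsymbol{v}_h)-a(\boldsymbol{u},\boldsymbol{v}_h^l)
= \underbrace{(E_{\Gamma_h}(\boldsymbol{u}^e),E_{\Gamma_h}(\boldsymbol{v}_h))_{\Gamma_h}-(E_\Gamma(\boldsymbol{u}),E_\Gamma(\boldsymbol{v}_h^l))_\Gamma}_{A_1}
+\underbrace{(\boldsymbol{u}^e,\boldsymbol{v}_h)_{\Gamma_h}-(\boldsymbol{u},\boldsymbol{v}_h^l)_\Gamma}_{A_2}
+\underbrace{\sum_{E}\tfrac{\alpha}{h}\int_E \lJump \boldsymbol{u}^e\rJump\cdot\lJump \boldsymbol{v}_h\rJump\,d\sigma_h}_{A_3}.
\]
The term $A_3$ vanishes: $\boldsymbol{u}\in \boldsymbol{H}^2_{\tn}(\Gamma)$ is continuous, so its extension $\boldsymbol{u}^e$ is single-valued across every edge $E$ of $\Gamma_h$. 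The term $A_2$ is handled exactly as in Lemma~\ref{I9}. After the change of variables $ds\circ p=\mu_h ds_h$ it equals $((1-\mu_h)\boldsymbol{u}^e,\boldsymbol{v}_h)_{\Gamma_h}$. By (\ref{m1}), (\ref{eq22}) and the regularity (\ref{eq8}), this is bounded by $Ch^2\|\boldsymbol{f}\|_{\boldsymbol{L}^2(\Gamma)}\|\boldsymbol{v}_h\|_h$.

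The main term is $A_1$. I would first transfer the continuous integral to $\Gamma_h$ elementwise, writing $(E_\Gamma(\boldsymbol{u}),E_\Gamma(\boldsymbol{v}_h^l))_{K^l}=((E_\Gamma\boldsymbol{u})^e,(E_\Gamma \boldsymbol{v}_h^l)^e\mu_h)_K$. This gives
\[
A_1=\big(E_{\Gamma_h}(\boldsymbol{u}^e)-(E_\Gamma \boldsymbol{u})^e,E_{\Gamma_h}(\boldsymbol{v}_h)\big)_{\Gamma_h}
+\big((E_\Gamma\boldsymbol{u})^e,E_{\Gamma_h}(\boldsymbol{v}_h)-(E_\Gamma\boldsymbol{v}_h^l)^e\big)_{\Gamma_h}
+\big((E_\Gamma\boldsymbol{u})^e,(E_\Gamma\boldsymbol{v}_h^l)^e(1-\mu_h)\big)_{\Gamma_h}.
\]
Symmetrization is a bounded linear operation with $\|E(\cdot)\|\le\|\nabla(\cdot)\|$. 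The first difference is therefore controlled by (\ref{8a}), giving $Ch\|\boldsymbol{u}^e\|_{\boldsymbol{H}^1(\Gamma_h)}$. The second difference is controlled elementwise by (\ref{I8}), giving $Ch\|\boldsymbol{v}_h\|_{\boldsymbol{H}^1(K)}$; here I use that $\boldsymbol{v}_h|_K$ is a tangential $H^1$ field on each $K$, and sum the broken norms. The third term is $O(h^2)$ by (\ref{m1}) and (\ref{eq23}).

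Cauchy--Schwarz, norm equivalence (\ref{eq22})--(\ref{eq23}) and $\|\boldsymbol{u}\|_{\boldsymbol{H}^1(\Gamma)}\le C\|\boldsymbol{f}\|_{\boldsymbol{L}^2(\Gamma)}$ from (\ref{eq8}) then give $|A_1|\le Ch\|\boldsymbol{f}\|_{\boldsymbol{L}^2(\Gamma)}\|\boldsymbol{v}_h\|_h$. This proves (\ref{eq51}). The main technical point is justifying (\ref{I8}) for piecewise functions, since $\boldsymbol{v}_h\notin \boldsymbol{H}^1(\Gamma_h)$. Because Lemma~\ref{lemma:OpDiff} is local (pointwise in $K$), I would apply it on each element and sum.

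For (\ref{eq51a}), the jump term again vanishes for both $\boldsymbol{u}^e$ and $\boldsymbol{w}^e$. The mass term is $O(h^2)$ by Lemma~\ref{I9}. For the strain term I would not use the first-order operator differences. Instead I would mimic Lemma~\ref{G1}: write $E_\Gamma(\boldsymbol{u}):E_\Gamma(\boldsymbol{w})=\tfrac12(\nabla_\Gamma\boldsymbol{u}:\nabla_\Gamma\boldsymbol{w}+\nabla_\Gamma\boldsymbol{u}:(\nabla_\Gamma\boldsymbol{w})^T)$. The first piece is exactly (\ref{I8a}). The transposed piece should follow by the same argument as Lemma~5.5 of \cite{HansboLarson2020}, since that argument only uses the structure of $\mathbf{P}_h\nabla(\cdot)\mathbf{P}_h$ versus $\mathbf{P}\nabla(\cdot)\mathbf{P}$ together with $h^2$ geometry bounds, and is insensitive to transposition. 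The obstacle there is that the $O(h)$ errors in $\mathbf{P}-\mathbf{P}_h$ must cancel to second order. That cancellation relies on the tangentiality of $\boldsymbol{u},\boldsymbol{w}$ and on (\ref{eq11})-type $h^2$ estimates, and I would defer it to the cited lemma, adapted to the transposed contraction.
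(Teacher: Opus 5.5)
Your proposal is correct and follows the paper's proof essentially step for step. The stabilization term vanishes since $\lJump \boldsymbol{u}^e \rJump = 0$, and the remaining difference splits into the same two operator-difference terms (controlled by (\ref{8a}) and, elementwise, by (\ref{I8})) plus the two $(1-\mu_h)$ terms bounded via (\ref{m1}), which are exactly the paper's $\text{T}_6$--$\text{T}_9$. For (\ref{eq51a}) the paper likewise only combines the transpose manipulation of (\ref{eq54c}) with Lemma~\ref{G1}, so your explicit deferral of the transposed contraction $\nabla_\Gamma \boldsymbol{u} : (\nabla_\Gamma \boldsymbol{w})^T$ to the Hansbo--Larson argument matches the paper's level of detail, and your remark that (\ref{I8}) must be applied element by element makes explicit a step the paper leaves implicit.
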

\begin{proof}
We begin the derivation with expanding the bilinear terms. By continuity, $\lJump \boldsymbol{u}^e \rJump = 0$, and hence the jump term in the second line vanishes. In the third line, we first apply change of variables to the terms defined on the exact surface $\Gamma$, then subsequently introducing the terms $\int_K (E_{\Gamma} (\boldsymbol{u}))^e : E_{\Gamma_h} (\boldsymbol{v}_h) \hspace{1mm} ds_h$ and $\int_K (E_{\Gamma} (\boldsymbol{u}))^e : (E_{\Gamma} (\boldsymbol{v}_h^l))^e \hspace{1mm} ds_h$ and further rearranging the resulting expressions, we obtain
\begin{equation}
\begin{split}
\label{50a}
     & a_h(\boldsymbol{u}^e, \boldsymbol{v}_h) - a(\boldsymbol{u}, \boldsymbol{v}_h^l) \\
    & = \sum_{K \in \mathcal{T}_h} \int_K E_{\Gamma_h} (\boldsymbol{u}^e): E_{\Gamma_h} (\boldsymbol{v}_h) \hspace{1mm} ds_h+ \int_{\Gamma_h} \boldsymbol{u}^e \cdot \boldsymbol{v}_h \hspace{1mm} ds_h- \sum_{K^l \in \mathcal{T}_h^l} \int_{K^l} E_{\Gamma} (\boldsymbol{u}): E_{\Gamma} (\boldsymbol{v}_h^l)\hspace{1mm} ds   \\
    & \quad -\int_{\Gamma} \boldsymbol{u} \cdot \boldsymbol{v}_h^l\hspace{1mm} ds + \underbrace{\sum_{E \in \mathcal{E}_h} \int_E \frac{\alpha}{h_E} \lJump \boldsymbol{u}^e \rJump \lJump \boldsymbol{v}_h \rJump \hspace{1mm} d \sigma_h}_{=0} \\
    & =   \underbrace{ \sum_{K \in \mathcal{T}_h} \int_K \left[ E_{\Gamma_h} (\boldsymbol{u}^e)- (E_{\Gamma} (\boldsymbol{u}))^e \right]: E_{\Gamma_h} (\boldsymbol{v}_h) \hspace{1mm} ds_h}_{\text{T}_6} + \\
    & \qquad  \underbrace{ \sum_{K \in \mathcal{T}_h} \int_K (E_{\Gamma} (\boldsymbol{u}))^e : \left[ E_{\Gamma_h} (\boldsymbol{v}_h)- (E_{\Gamma} (\boldsymbol{v}_h^l))^e \right] \hspace{1mm} ds_h}_{\text{T}_7} + \\
    & \qquad   \underbrace{\sum_{K \in \mathcal{T}_h} \int_K (E_{\Gamma} (\boldsymbol{u}))^e: (E_{\Gamma} (\boldsymbol{v}_h^l))^e (1-\mu_h) \hspace{1mm} ds_h}_{\text{T}_8} + \underbrace{ \int_{\Gamma_h} (1-\mu_h) \boldsymbol{u}^e \cdot \boldsymbol{v}_h \hspace{1mm} ds_h. }_{\text{T}_9}
    \end{split}
    \end{equation}
The estimations of the terms $\text{T}_6$ and $\text{T}_7$ are analogous. Hence, it suffices to estimate T$_6$ below. Using the definition of the strain rate tensors, we obtain the second line using the commutative property of the transpose and extension operators.
Subsequently applying the Cauchy-Schwarz inequality, the operator difference result (\ref{8a}), the norm equivalence results (\ref{eq22})-(\ref{eq23}), and the regularity estimate (\ref{eq8}), we obtain
\begin{align*}
    \text{T}_6 & = \frac{1}{2} \sum_{K \in \mathcal{T}_h} \int_K \left[ \{ \nabla_{\Gamma_h} \boldsymbol{u}^e- (\nabla_{\Gamma} \boldsymbol{u})^e \}  + \{ (\nabla_{\Gamma_h} \boldsymbol{u}^e)^T- ((\nabla_{\Gamma} \boldsymbol{u})^T)^e \} \right] : E_{\Gamma_h} (\boldsymbol{v}_h) \hspace{1mm} ds_h \\
    & = \frac{1}{2} \sum_{K \in \mathcal{T}_h} \int_K \left[ \{ \nabla_{\Gamma_h} \boldsymbol{u}^e- (\nabla_{\Gamma} \boldsymbol{u})^e \} + \{ \nabla_{\Gamma_h} \boldsymbol{u}^e- (\nabla_{\Gamma} \boldsymbol{u})^e\}^T\right] : E_{\Gamma_h} (\boldsymbol{v}_h) \hspace{1mm} ds_h  \\
    & \leq C \| \nabla_{\Gamma_h} \boldsymbol{u}^e- (\nabla_{\Gamma} \boldsymbol{u})^e \|_{\boldsymbol{L}^2(\Gamma_h)} \| \boldsymbol{v}_h \|_{\boldsymbol{H}^1(\Gamma_h)} \numberthis \label{eq54c} \\
    & \leq C h \| \boldsymbol{u} \|_{\boldsymbol{H}^1(\Gamma)} \| \boldsymbol{v}_h \|_h\\ 
    & \leq C h \| \boldsymbol{f} \|_{\boldsymbol{L}^2(\Gamma)} \| \boldsymbol{v}_h \|_{h}.  
\end{align*}
Proceeding in the same manner, we obtain an analogous estimate for $\text{T}_7$. The remaining terms, $\text{T}_8$ and $\text{T}_9 $, are then estimated by successively applying the Cauchy-Schwarz inequality, the bound (\ref{m1}) for $(1-\mu_h)$, the norm equivalence stated in Lemma \ref{lem5}, and the regularity estimate (\ref{eq8}).
\begin{align*}
    \text{T}_8 & \leq \| 1-\mu_h \|_{L^{\infty}(\Gamma_h)} \| \boldsymbol{u} \|_{\boldsymbol{H}^1(\Gamma)} \| \boldsymbol{v}_h \|_{\boldsymbol{H}^1(\Gamma_h)} \leq C h^2 \| \boldsymbol{f} \|_{\boldsymbol{L}^2(\Gamma)} \| \boldsymbol{v}_h \|_{h},\\
    \text{T}_9 & \leq  \| 1-\mu_h \|_{L^{\infty}(\Gamma_h)} \| \boldsymbol{u}^e \|_{\boldsymbol{L}^2(\Gamma_h)}  \| \boldsymbol{v}_h \|_{\boldsymbol{L}^2(\Gamma_h)} \leq C h^2 \| \boldsymbol{f} \|_{\boldsymbol{L}^2(\Gamma)} \| \boldsymbol{v}_h \|_{h}.
\end{align*}
Combining the estimates for T$_6$-T$_9$ in (\ref{50a}), we have (\ref{eq51}). \vspace{2mm}\\
\textcolor{black}{The estimate in (\ref{eq51a}) is obtained by combining the arguments used in the proof of (\ref{eq54c}) with the geometric estimate given in Lemma \ref{G1}.}
\end{proof}

\begin{lemma}[Geometric error of mass conservation]\label{lem11}
    Let $p \in H^1(\Gamma) \cap L_0^2(\Gamma)$ be the solution of (2), $p^e$ be its extension and $\boldsymbol{v}_h^l$ denote the lift of $\boldsymbol{v}_h$. Then, the following estimate associated with the bilinear term holds for any $\boldsymbol{v}_h \in \mathbf{V}_h$:
    \begin{equation}
    \label{eq52}
       | b_h(\boldsymbol{v}_h, p^e)- b(\boldsymbol{v}_h^l, p) | \leq C h \|\boldsymbol{f}\|_{\boldsymbol{L}^2(\Gamma)} \| \boldsymbol{v}_h \|_h.
    \end{equation}
     Furthermore, for $\boldsymbol{u} \in \boldsymbol{H}^2_{\tn}(\Gamma)$ and $q \in H^1(\Gamma) \cap L_0^2(\Gamma) $, the following estimate holds: 
    \begin{equation}
    \label{eq65}
       |b_h(\boldsymbol{u}^e, q^e)- b(\boldsymbol{u}, q)| \leq C h^2 \|\boldsymbol{u}\|_{\boldsymbol{H}_{\tn}^2(\Gamma)} \| q \|_{H^1(\Gamma)},
    \end{equation}
    where $\boldsymbol{u}^e$ and $q^e$ are their extensions, respectively.
\end{lemma}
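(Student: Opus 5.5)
For (\ref{eq52}), I would mirror the treatment of T$_6$--T$_9$ in Lemma \ref{lem10}, using the trace representation of the divergence.

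\textbf{Step 1: splitting.} Writing $b(\boldsymbol{v}_h^l,p)=\sum_{K^l}\int_{K^l}(\text{div}_{\Gamma}\boldsymbol{v}_h^l)\,p\,ds$ and changing variables to $\Gamma_h$ through $ds\circ p=\mu_h\,ds_h$, I would decompose
\begin{align*}
b_h(\boldsymbol{v}_h,p^e)-b(\boldsymbol{v}_h^l,p)
&=\sum_{K\in\mathcal{T}_h}\int_K \big[\text{div}_{\Gamma_h}\boldsymbol{v}_h-(\text{div}_{\Gamma}\boldsymbol{v}_h^l)^e\big]\,p^e\,ds_h\\
&\quad+\sum_{K\in\mathcal{T}_h}\int_K (\text{div}_{\Gamma}\boldsymbol{v}_h^l)^e\,p^e\,(1-\mu_h)\,ds_h .
\end{align*}

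\textbf{Step 2: the first term.} Since $\text{div}=\text{tr}(\nabla\,\cdot)$ and $|\text{tr}(A)|\le C|A|$, the bracket is bounded pointwise by $C|\nabla_{\Gamma_h}\boldsymbol{v}_h-(\nabla_\Gamma\boldsymbol{v}_h^l)^e|$. By Cauchy--Schwarz and the operator difference estimate (\ref{I8}), applied elementwise because $\boldsymbol{v}_h$ is tangential to $\Gamma_h$ and smooth on each $K$, this term is at most
\[
Ch\,\|\boldsymbol{v}_h\|_{\boldsymbol{H}^1(\Gamma_h)}\,\|p^e\|_{L^2(\Gamma_h)}.
\]
The broken $H^1$ norm of $\boldsymbol{v}_h$ is controlled by $\|\boldsymbol{v}_h\|_h$, since $\nabla_{\Gamma_h}$ contains the full tangential derivative and $\boldsymbol{v}_h$ is tangential.

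\textbf{Step 3: the second term.} This term is bounded by $\|1-\mu_h\|_{L^\infty}\le Ch^2$ from (\ref{m1}), together with the norm equivalences (\ref{eq22})--(\ref{eq23}).

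\textbf{Step 4: conclusion of (\ref{eq52}).} Use (\ref{eq22}) to get $\|p^e\|_{L^2(\Gamma_h)}\le C\|p\|_{L^2(\Gamma)}$, and the regularity estimate (\ref{eq8}) to get $\|p\|_{H^1(\Gamma)}\le C\|\boldsymbol{f}\|_{\boldsymbol{L}^2(\Gamma)}$. Together these give (\ref{eq52}).

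\textbf{Main obstacle.} The delicate point is applying (\ref{I8}) to the nonconforming $\boldsymbol{v}_h$. That lemma is stated for $\boldsymbol{H}^1_{\tn}(\Gamma_h)$, but its proof (Lemma 5.4 of \cite{HansboLarson2020}) is purely elementwise, so a broken version holds. I would state this explicitly.

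\textbf{Second estimate (\ref{eq65}).} Here linear order is insufficient, so I would reduce directly to Lemma \ref{lemdiv}. Changing variables gives
\[
b_h(\boldsymbol{u}^e,q^e)=\int_\Gamma (\text{div}_{\Gamma_h}\boldsymbol{u}^e)^l\,q\,\mu_h^{-1}\,ds .
\]
Hence
\begin{align*}
b_h(\boldsymbol{u}^e,q^e)-b(\boldsymbol{u},q)
&=\big(q,(\text{div}_{\Gamma_h}\boldsymbol{u}^e)^l-\text{div}_\Gamma\boldsymbol{u}\big)_\Gamma\\
&\quad+\big(q,(\text{div}_{\Gamma_h}\boldsymbol{u}^e)^l(\mu_h^{-1}-1)\big)_\Gamma .
\end{align*}
The first term is $O(h^2)\|\boldsymbol{u}\|_{\boldsymbol{H}^2_{\tn}}\|q\|_{H^1}$ by (\ref{I9}). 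For the second, $|\mu_h^{-1}-1|\le Ch^2$ by (\ref{m1}), and $\|\text{div}_{\Gamma_h}\boldsymbol{u}^e\|\le C\|\boldsymbol{u}\|_{\boldsymbol{H}^1}$ by (\ref{eq23}), so it is also $O(h^2)$. Consequently, in (\ref{eq65}) the only real content is carried by Lemma \ref{lemdiv}.
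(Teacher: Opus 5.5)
Your proposal is correct and follows essentially the same route as the paper. For (\ref{eq52}) you use the same splitting into an operator-difference term controlled by (\ref{I8}) and an $O(h^2)$ term from $1-\mu_h$, followed by the regularity estimate (\ref{eq8}); for (\ref{eq65}) you reduce to Lemma \ref{lemdiv} plus a measure correction. The differences are minor refinements rather than a different argument: you bound $\|(\nabla_\Gamma \boldsymbol{v}_h^l)^e\|$ via (\ref{eq22})--(\ref{eq23}) instead of the paper's triangle inequality with (\ref{I8}), you lift to $\Gamma$ so that Lemma \ref{lemdiv} applies verbatim, and you note explicitly that (\ref{I8}) is needed in its elementwise form for the nonconforming $\boldsymbol{v}_h$.
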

\begin{proof} Upon adding and subtracting $\int_{\Gamma_h} (\text{div}_{\Gamma} \boldsymbol{v}_h^l)^e p^e\hspace{1mm} ds_h$, we obtain
 \begin{equation}
     \begin{split}
         \label{eq52d}
          &  b_h(\boldsymbol{v}_h, p^e)- b(\boldsymbol{v}_h^l, p) \\
    & = \underbrace{\int_{\Gamma_h} \left( \text{div}_{\Gamma_h} \boldsymbol{v}_h - (\text{div}_{\Gamma} \boldsymbol{v}_h^l)^e \right) p^e \hspace{1mm} ds_h }_{\text{T}_{10}}+ \underbrace{\left[ \int_{\Gamma_h} (\text{div}_{\Gamma} \boldsymbol{v}_h^l)^e p^e \hspace{1mm} ds_h-  \int_{\Gamma} (\text{div}_{\Gamma} \boldsymbol{v}_h^l) p \hspace{1mm} ds \right].}_{\text{T}_{11}} 
     \end{split}
 \end{equation}
As the trace operator and the extension operator commute, we obtain the first step in the following estimation of $\text{T}_{10}$ by applying the Cauchy-Schwarz inequality. Later, employing the operator difference result (\ref{I8}) and the norm equivalence (\ref{eq22}), we obtain
\begin{equation}
    \begin{split}
    \label{eq52a}
       \text{T}_{10} & \leq \| \text{tr} (\nabla_{\Gamma_h} \boldsymbol{v}_h) - \text{tr}((\nabla_{\Gamma} \boldsymbol{v}_h^l)^e) \|_{L^2(\Gamma_h)} \| p^e \|_{L^2(\Gamma_h)} \\
    & \leq C \| \nabla_{\Gamma_h} \boldsymbol{v}_h - (\nabla_{\Gamma} \boldsymbol{v}_h^l)^e \|_{\boldsymbol{L}^2(\Gamma_h)} \| p^e \|_{L^2(\Gamma_h)} \\
    & \leq C h \| \boldsymbol{v}_h \|_{h} \| p \|_{L^2(\Gamma)}.  
    \end{split}
\end{equation}
To estimate $\text{T}_{11}$, we apply a change of variable on the term defined on the exact surface $\Gamma$. We then successively employ the estimate (\ref{m1}), the Cauchy-Schwarz inequality, and the operator difference result (\ref{I8}) after introducing $\nabla_{\Gamma_h} \boldsymbol{v}_h$. Finally, applying a further change of variable to the pressure term yields the following estimate:
\begin{equation}
    \begin{split}
    \label{eq52c}
   \text{T}_{11}
   & \leq \| 1-\mu_h \|_{L^{\infty}(\Gamma_h)} \int_{\Gamma_h} (\text{div}_{\Gamma} \boldsymbol{v}_h^l)^e p^e \hspace{1mm} ds_h \\
   & \leq C h^2 \| (\nabla_{\Gamma} \boldsymbol{v}_h^l)^e \|_{\boldsymbol{L}^2(\Gamma_h)} \| p^e \|_{L^2(\Gamma_h)} \\
   & \leq C h^2 \Big[ \| (\nabla_{\Gamma} \boldsymbol{v}_h^l)^e -\nabla_{\Gamma_h} \boldsymbol{v}_h \|_{\boldsymbol{L}^2(\Gamma_h)} + \| \nabla_{\Gamma_h} \boldsymbol{v}_h \|_{\boldsymbol{L}^2(\Gamma_h)} \Big] \| p^e \|_{L^2(\Gamma_h)} \\
   & \leq C h^2 \| \boldsymbol{v}_h \|_{h} \| p \|_{L^2(\Gamma)}. 
\end{split}
\end{equation}
Combining the above estimates (\ref{eq52a})-(\ref{eq52c}) in (\ref{eq52d}) and invoking the regularity result (\ref{eq8}), we obtain the desired result (\ref{eq52}). \vspace{2mm}\\
\textcolor{black}{The bound (\ref{eq65}) is obtained by combining the steps of the proof of (\ref{eq52a}) with the geometric bound provided in Lemma \ref{lemdiv}.}
\end{proof}

\begin{lemma}[Jump terms]
\label{lem9}
    Let $(\boldsymbol{u},p) \in \boldsymbol{H}^2_{\tn}(\Gamma) \times H^1(\Gamma) \cap L_0^2(\Gamma)$ be the solution of (2) and $(\boldsymbol{u}^e,p^e)$ be their extensions. Then the following error estimations hold:
    \begin{align}
    \label{eq48}
          \left| \sum_{E^l \in \mathcal{E}_h^l} \int_{E^l} (\nabla_{\Gamma} \boldsymbol{u}) \boldsymbol{n}_{E^l}^+ \cdot \lJump \boldsymbol{v}_h^l \rJump \right| & \leq C h \| \boldsymbol{v}_h \|_h \| \boldsymbol{f} \|_{\boldsymbol{L}^2(\Gamma)}, \\
          \label{eq49}
            \left| \sum_{E^l \in \mathcal{E}_h^l} \int_{E^l} p \boldsymbol{n}_{E^l}^+ \cdot \lJump \boldsymbol{v}_h^l \rJump \right| & \leq C h \| \boldsymbol{v}_h \|_h \| \boldsymbol{f} \|_{\boldsymbol{L}^2(\Gamma)}.
    \end{align}
\end{lemma}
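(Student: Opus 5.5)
The plan is to follow the classical Crouzeix--Raviart consistency argument and exploit the vanishing of the edge means of the jumps. We decompose the jump into its conormal and tangential parts and reduce each to the properties (\ref{J1}) of $\mathbf{V}_h$, paying an extra geometric error because the discrete conormals $\boldsymbol{n}_E^\pm$ are not opposite.

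For (\ref{eq48}), we first transfer the edge integral to $\Gamma_h$ by a change of variables, $d\sigma=\mu_h^E\,d\sigma_h$, writing $\boldsymbol{g}:=((\nabla_\Gamma \boldsymbol{u})\boldsymbol{n}_{E^l}^+)^e$. Since $\nabla_\Gamma\boldsymbol{u}=\mathbf{P}\nabla\boldsymbol{u}^e\mathbf{P}$, the vector $\boldsymbol{g}$ is tangential to $\Gamma$, so only $\mathbf{P}^e\lJump\boldsymbol{v}_h\rJump$ enters. We expand $\mathbf{P}^e\boldsymbol{v}_h^\pm$ in the orthonormal frame $\{\boldsymbol{n}_{E^l}^{e\pm},\boldsymbol{\tau}_{E^l}^{e\pm}\}$ of the tangent plane of $\Gamma$. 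We then replace $\boldsymbol{n}^{e\pm}_{E^l}\cdot\boldsymbol{v}_h^\pm$ by $\boldsymbol{n}_E^\pm\cdot\boldsymbol{v}_h^\pm$, and similarly for $\boldsymbol{\tau}$. The error of this replacement is controlled by (\ref{eq12}) together with the tangentiality $\mathbf{P}_h\boldsymbol{v}_h=\boldsymbol{v}_h$, which gives $|(\boldsymbol{n}^{e\pm}_{E^l}-\boldsymbol{n}_E^\pm)\cdot\boldsymbol{v}_h^\pm| = |(\mathbf{P}_h\boldsymbol{n}^{e\pm}_{E^l}-\boldsymbol{n}_E^\pm)\cdot\boldsymbol{v}_h^\pm|\le Ch^2|\boldsymbol{v}_h^\pm|$. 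Using $\boldsymbol{n}_{E^l}^+=-\boldsymbol{n}_{E^l}^-$, the leading term becomes
\[
\int_E (\boldsymbol{g}\cdot\boldsymbol{n}^{e+}_{E^l})\,\lJump \boldsymbol{v}_h\cdot\boldsymbol{n}_E\rJump\,\mu_h^E\, d\sigma_h + \int_E (\boldsymbol{g}\cdot\boldsymbol{\tau}^{e+}_{E^l})\,\lJump \boldsymbol{v}_h\cdot\boldsymbol{\tau}_E\rJump\,\mu_h^E\, d\sigma_h ,
\]
where the jumps are understood with the signs as in (\ref{J0}).

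By (\ref{J1}), each scalar jump $\lJump\boldsymbol{v}_h\cdot\boldsymbol{n}_E\rJump$ is linear along $E$ with zero mean. We may therefore subtract the edge mean $\bar{s}_E$ of $s:=(\boldsymbol{g}\cdot\boldsymbol{n}^{e+}_{E^l})\mu_h^E$; since $\mu_h^E=1+O(h^2)$ by (\ref{m2}), the weight contributes only a higher-order remainder. The edge term is then bounded by
\[
\|s-\bar{s}_E\|_{L^2(E)}\,\|\lJump\boldsymbol{v}_h\cdot\boldsymbol{n}_E\rJump\|_{L^2(E)} .
\]
For the first factor we use the trace inequality and a Poincaré estimate on a neighbouring element $K$, which give $\|s-\bar{s}_E\|_{L^2(E)}\le Ch^{1/2}|\boldsymbol{g}|_{\boldsymbol{H}^1(K)}+\dots$. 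The second factor is at most $h^{1/2}\big(h^{-1/2}\|\lJump\boldsymbol{v}_h\rJump\|_{L^2(E)}\big)$ plus geometric terms. Summing over $E$ with the Cauchy--Schwarz inequality yields $Ch\,|\boldsymbol{u}|_{\boldsymbol{H}^2}\,\|\boldsymbol{v}_h\|_h$. We use the jump part of $\|\cdot\|_h$ directly, which avoids the discrete Poincaré step (alternatively one can bound $\|\lJump\boldsymbol{v}_h\cdot\boldsymbol{n}_E\rJump\|_{L^2(E)}\le Ch^{1/2}\|\nabla_{\Gamma_h}\boldsymbol{v}_h\|_{L^2(K^+\cup K^-)}$ as in (\ref{36b})). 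The $|\boldsymbol{g}|_{\boldsymbol{H}^1}$ factor is converted to $\|\boldsymbol{u}\|_{\boldsymbol{H}^2_{\tn}(\Gamma)}$ via Lemma \ref{lem5}, since $\boldsymbol{n}_{E^l}$ is smooth on each lifted element, and then to $\|\boldsymbol{f}\|_{\boldsymbol{L}^2(\Gamma)}$ by (\ref{eq8}).

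The remaining geometric terms have the form $\sum_E\int_E |\boldsymbol{g}|\,h^2|\boldsymbol{v}_h^\pm|\,d\sigma_h$. We bound them by the trace inequality applied to $\boldsymbol{g}$ and to the polynomial $\boldsymbol{v}_h$ (inverse trace estimate), which gives $Ch^2h^{-1}\|\boldsymbol{g}\|_{\boldsymbol{H}^1}\|\boldsymbol{v}_h\|_{\boldsymbol{L}^2}\le Ch\|\boldsymbol{f}\|\,\|\boldsymbol{v}_h\|_h$.

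For (\ref{eq49}), we run the same argument with $\boldsymbol{g}$ replaced by $p\,\boldsymbol{n}_{E^l}^+$. Now only the conormal jump survives, and $p\in H^1$ suffices for the mean-subtraction step, since $\|p^e-\bar{p}_E\|_{L^2(E)}\le Ch^{1/2}|p^e|_{H^1(K)}$. The bound $\|p\|_{H^1}\le C\|\boldsymbol{f}\|$ from (\ref{eq8}) then closes the estimate.

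The main obstacle is the first step: the discrete conormals $\boldsymbol{n}_E^\pm$ are not opposite, so $\lJump \boldsymbol{v}_h\rJump$ itself has no vanishing mean, and (\ref{J1}) applies only to the signed scalar components. The bookkeeping that projects onto the exact frame, and its $h^2$ geometric error from (\ref{eq11})--(\ref{eq12}), must be done carefully so that these defects lose only a factor $h^{-1}$ through inverse estimates and still leave order $h$.
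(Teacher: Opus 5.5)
Your argument is correct and follows essentially the same route as the paper. The paper only cites Lemma 5.4 of \cite{mehlmann2023}, whose argument is the one you give: split $\mathbf{P}^e\lJump \boldsymbol{v}_h\rJump$ into exact conormal and tangential components, replace $\boldsymbol{n}_{E^l}^{e\pm}\cdot\boldsymbol{v}_h^\pm=\mathbf{P}_h\boldsymbol{n}_{E^l}^{e\pm}\cdot\boldsymbol{v}_h^\pm$ by $\boldsymbol{n}_E^\pm\cdot\boldsymbol{v}_h^\pm$ at cost $O(h^2)$ via (\ref{eq12}), and subtract means against the zero-mean scalar jumps (\ref{J1}); the pressure term is then treated ``analogously'', exactly as you do, with only the conormal jump surviving. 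Your phrase that $\boldsymbol{n}_{E^l}$ is ``smooth on each lifted element'' is loose, since it lives on edges, but harmless: you can replace $\boldsymbol{n}_{E^l}^{e+}$ (and $\boldsymbol{\tau}_{E^l}^{e+}$) by the constant $\boldsymbol{n}_E^+$ (resp.\ $\boldsymbol{\tau}_E^+$) before subtracting the mean, and the resulting $O(h)$ error from (\ref{eq10}) still gives order $h$ after the trace inequality and summation over edges.
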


\begin{proof}
The proof of (\ref{eq48}) can be found in Lemma 5.4 of \cite{mehlmann2023}. Proceeding analogously for the pressure variable, we obtain (\ref{eq49}).
\end{proof}

\begin{theorem}[Energy-error estimate] \label{thm3}
    Let $(\boldsymbol{u}^e,p^e) $ be the extension of the solution $(\boldsymbol{u},p) \in \boldsymbol{H}^2_{\tn}(\Gamma) \times H^1(\Gamma) \cap L_0^2(\Gamma)$ of (\ref{eq2}) and $(\boldsymbol{u}_h,p_h)$ the discrete solution of (\ref{eq13}). Then the following error estimate holds 
    \begin{equation}
    \label{eq55}
        \|\boldsymbol{u}^e- \boldsymbol{u}_h \|_h + \|p^e-p_h \|_{L^2(\Gamma_h)} \leq C h \| \boldsymbol{f} \|_{\boldsymbol{L}^2(\Gamma)}.
    \end{equation}
\end{theorem}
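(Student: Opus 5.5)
The proof is a direct assembly of the Second Strang Lemma (Theorem \ref{thm2}) with the interpolation bounds and the nonconformity estimate of Lemma \ref{lem12}. Starting from (\ref{eq45}), we bound each of the three terms on the right-hand side by $Ch\|\boldsymbol{f}\|_{\boldsymbol{L}^2(\Gamma)}$.

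\textbf{Velocity approximation error.} We pick $\boldsymbol{v}_h=\tilde{\Pi}_h^{\tn}\boldsymbol{u}$ in the infimum. The energy-norm interpolation estimate (\ref{I6}) then gives
\[
\inf_{\boldsymbol{v}_h\in\mathbf{V}_h}\|\boldsymbol{u}^e-\boldsymbol{v}_h\|_h\le Ch\|\boldsymbol{u}^e\|_{\boldsymbol{H}^2(\Gamma_h)}.
\]
We pass back to the exact surface elementwise using the norm equivalences (\ref{eq22}), (\ref{eq23}) and (\ref{eq24a}) of Lemma \ref{lem5}, and summing over $K$ gives $\|\boldsymbol{u}^e\|_{\boldsymbol{H}^2(\Gamma_h)}\le C\|\boldsymbol{u}\|_{\boldsymbol{H}^2(\Gamma)}$. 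The equivalence of $\|\cdot\|_{\boldsymbol{H}^2(\Gamma)}$ and $\|\cdot\|_{\boldsymbol{H}^2_{\tn}(\Gamma)}$ for tangential fields (Lemma 2.3 of \cite{HansboLarson2020}), combined with the regularity estimate (\ref{eq8}), then yields $Ch\|\boldsymbol{f}\|_{\boldsymbol{L}^2(\Gamma)}$.

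\textbf{Pressure approximation error.} We take $q_h=I_hp$ and apply (\ref{I7}) to get $\|p^e-I_hp\|_{L^2(\Gamma_h)}\le Ch\|p^e\|_{H^1(\Gamma_h)}$. Lemma \ref{lem5} applied componentwise to scalars gives $\|p^e\|_{H^1(\Gamma_h)}\le C\|p\|_{H^1(\Gamma)}$, and (\ref{eq8}) then gives $Ch\|\boldsymbol{f}\|_{\boldsymbol{L}^2(\Gamma)}$. One caveat: $I_hp$ must have zero mean on $\Gamma_h$ to lie in $Q_h$. The weight $\mu_h$ in the definition of $I_h$ ensures $\int_{\Gamma_h}I_hp\,ds_h=\int_\Gamma p\,ds=0$, so $I_hp\in Q_h$.

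\textbf{Nonconformity error.} Lemma \ref{lem12} bounds the numerator by $Ch\|\boldsymbol{f}\|_{\boldsymbol{L}^2(\Gamma)}\|\boldsymbol{v}_h\|_h$, so dividing by $\|\boldsymbol{v}_h\|_h$ and taking the supremum gives $Ch\|\boldsymbol{f}\|_{\boldsymbol{L}^2(\Gamma)}$.

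Summing the three contributions in (\ref{eq45}) gives (\ref{eq55}).

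The only points requiring care are that $\boldsymbol{u}^e$ is not tangential to $\Gamma_h$, and that Lemma \ref{lem7} is formulated for $\boldsymbol{v}$ on $\Gamma_h$. Both issues are already absorbed into (\ref{I6}), whose right-hand side involves $\|\boldsymbol{u}^e\|_{\boldsymbol{H}^2(\Gamma_h)}$. All genuinely difficult work (the geometric and jump terms) lies in Lemma \ref{lem12}, which we take as given.
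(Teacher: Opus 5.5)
Your proposal is correct and follows the paper's proof essentially verbatim. Both insert the nonconformity bound of Lemma~\ref{lem12}, the interpolation estimate (\ref{I6}) for $\tilde{\Pi}_h^{\tn}\boldsymbol{u}$ and the estimate (\ref{I7}) for $I_hp$ into the Second Strang Lemma (\ref{eq45}), and then close with the regularity estimate (\ref{eq8}); your extra remarks are valid and only make explicit what the paper leaves implicit: the norm-equivalence step from $\|\boldsymbol{u}^e\|_{\boldsymbol{H}^2(\Gamma_h)}$ to $\|\boldsymbol{u}\|_{\boldsymbol{H}^2(\Gamma)}$, and the check that the weight $\mu_h$ makes $I_hp$ mean-free on $\Gamma_h$, so that $I_hp\in Q_h$.
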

\begin{proof}
    The nonconformity estimate provided in Lemma \ref{lem12} yields
    \begin{equation}
    \label{eq56}
        \underset{\boldsymbol{v}_h \in \textbf{V}_h}{\text{sup}} \frac{| a_h(\boldsymbol{u}^e, \boldsymbol{v}_h) + b_h(\boldsymbol{v}_h, p^e)- (\boldsymbol{f}^e, \boldsymbol{v}_h)  |}{\| \boldsymbol{v}_h \|_h} \leq C h \| \boldsymbol{f} \|_{\boldsymbol{L}^2(\Gamma)}.
    \end{equation}
    We next address the approximation errors, for which we employ the estimates (\ref{I6}), (\ref{I7}) and the regularity result (\ref{eq8}). This gives
    \begin{align}
    \label{eq57}
        \underset{\boldsymbol{v}_h \in \textbf{V}_h}{\text{inf}}  \|\boldsymbol{u}^e- \boldsymbol{v}_h \|_h   \leq \|\boldsymbol{u}^e - \tilde{\Pi}_h^{tan} \boldsymbol{u} \|_h & \leq C h \| \boldsymbol{u}^e \|_{\boldsymbol{H}^2(\Gamma_h)} \leq C h \| \boldsymbol{f} \|_{\boldsymbol{L}^2(\Gamma)}, \\
        \label{eq58}
         \underset{q_h \in Q_h}{\text{inf}} \| p^e-q_h \|_{L^2(\Gamma_h)}  \leq \| p^e -I_h p \|_{L^2(\Gamma_h)} & \leq C h \| p^e \|_{H^1(\Gamma_h)} \leq C h \| \boldsymbol{f} \|_{\boldsymbol{L}^2(\Gamma)}.
    \end{align}
    The proof is concluded by combining the Strang's \textcolor{black}{Second} Lemma (\ref{eq45}) with the bounds (\ref{eq56})-(\ref{eq58}).
\end{proof}

\subsection{$L^2$-error estimate}
The $L^2$-error estimate is derived using the Aubin–Nitsche duality argument. The associated dual problem is: find $\boldsymbol{z} \in \boldsymbol{H}^1_{\tn}(\Gamma)$ and $w \in L^2_0(\Gamma)$ such that 
 \begin{equation}
        \begin{split}
        \label{eq59}
             a(\boldsymbol{z}, \boldsymbol{v}) + b(\boldsymbol{v},w) & = (\boldsymbol{g}, \boldsymbol{v})_{\Gamma} \hspace{4mm} \forall \boldsymbol{v} \in \boldsymbol{H}^1_{\tn}(\Gamma), \\
    b(\boldsymbol{z},q) & = 0 \hspace{13mm} \forall q \in L_0^2(\Gamma),
        \end{split}
    \end{equation}
holds,  where $\boldsymbol{g}= \textbf{P}_h^l(\boldsymbol{u}-\boldsymbol{u}_h^l)$. The following regularity estimate \textcolor{black}{\cite{olshanskii2018}} holds true:
\begin{equation}
\label{eq60}
         \| \boldsymbol{z} \|_{\boldsymbol{H}^2_{\tn}(\Gamma)} + \| w \|_{H^1(\Gamma)} \leq C \| \boldsymbol{g} \|_{\boldsymbol{L}^2(\Gamma)}.
     \end{equation}
The corresponding discrete dual problem to (\ref{eq59}) is to find $\boldsymbol{z}_h \in \textbf{V}_h$ and $w_h \in Q_h$ such that
\begin{equation}
\begin{split}
\label{eq61}
       a_h(\boldsymbol{z}_h, \boldsymbol{v}_h) + b_h(\boldsymbol{v}_h,w_h)  & = (\boldsymbol{g}^e, \boldsymbol{v}_h) \hspace{2mm} \forall \boldsymbol{v}_h \in \textbf{V}_h, \\
        b_h(\boldsymbol{z}_h,q_h)  & = 0 \hspace{13mm} \forall q_h \in Q_h.
\end{split}
\end{equation}
The following energy estimate for the dual solution follows from the energy error estimate provided in Theorem \ref{thm3}.
\begin{equation}
\label{eq62}
   \|\boldsymbol{z}^e- \boldsymbol{z}_h \|_h + \|w^e-w_h \|_{L^2(\Gamma_h)} \leq C h \| \boldsymbol{g} \|_{\boldsymbol{L}^2(\Gamma)}. 
\end{equation}
Based on Lemma \ref{lem12}, the following nonconformity error estimate holds:
\begin{equation}
\label{eq63}
       \left|a_h(\boldsymbol{z}^e, \boldsymbol{v}_h) + b_h(\boldsymbol{v}_h, w^e)- (\boldsymbol{g}^e, \boldsymbol{v}_h) \right| \leq C h \| \boldsymbol{v}_h \|_h \| \boldsymbol{g} \|_{\boldsymbol{L}^2(\Gamma)}.
    \end{equation}
The derivation of the $L^2$-error estimate \textcolor{black}{requires an approximation of} the nonconforming velocity and the geometric consistency errors arising from the surface approximation. Furthermore, the duality argument introduces interpolation and consistency errors associated with both the primal and dual problems, whose estimates must be combined to recover the optimal convergence order. To address these challenges, we make essential use of the pressure-coupling estimates, the dual interpolation estimate, and the primal and dual consistency error estimates established in the following \textcolor{black}{Lemma \ref{lem18} - Lemma \ref{lem17}, respectively.} 

\begin{lemma}[\textcolor{black}{Pressure-coupling estimates}] \label{lem18}
    Let $(\boldsymbol{u}^e,p^e)$ and $(\boldsymbol{z}^e,w^e)$ be the extensions of the solutions $(\boldsymbol{u},p) \in \boldsymbol{H}^2_{\tn}(\Gamma) \times H^1(\Gamma) \cap L_0^2(\Gamma)$ of (\ref{eq2}) and $(\boldsymbol{z},w) \in \boldsymbol{H}^2_{\tn}(\Gamma) \times H^1(\Gamma) \cap L_0^2(\Gamma)$ of the dual problem (\ref{eq59}), respectively. \textcolor{black}{For $(\boldsymbol{u}_h, p_h)$ the discrete solution of (\ref{eq13}) and $(\boldsymbol{z}_h, w_h)$ of (\ref{eq61}),} following estimates hold:
    \begin{align}
    \label{eq72}
         | b_h(\boldsymbol{z}^e, p_h-p^e) | & \leq C h^2 \| \boldsymbol{f} \|_{\boldsymbol{L}^2(\Gamma)} \| \boldsymbol{g} \|_{\boldsymbol{L}^2(\Gamma)}, \\
         \label{eq73}
         | b_h(\boldsymbol{u}^e, w_h-w^e) | & \leq C h^2 \| \boldsymbol{f} \|_{\boldsymbol{L}^2(\Gamma)} \| \boldsymbol{g} \|_{\boldsymbol{L}^2(\Gamma)}.
    \end{align}
\end{lemma}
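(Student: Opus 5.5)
The key observation is that $\boldsymbol{z}$ is exactly divergence free on $\Gamma$, while $\boldsymbol{z}^e$ is only approximately divergence free on $\Gamma_h$. I would therefore reduce $b_h(\boldsymbol{z}^e, p_h-p^e)$ to a geometric consistency term of order $h$ times the energy error of order $h$; (\ref{eq73}) follows in the same way, with the roles of $(\boldsymbol{u},p)$ and $(\boldsymbol{z},w)$ swapped.

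\textbf{Step 1 (discrete pressure part).} Write $b_h(\boldsymbol{z}^e, p_h-p^e) = b_h(\boldsymbol{z}^e, p_h-I_hp) + b_h(\boldsymbol{z}^e, I_hp - p^e)$. For the first term, set $r_h:=p_h-I_hp\in Q_h$; we may correct its mean by a constant, which changes $b_h$ only by a term controlled in the same way. Then
\begin{align*}
b_h(\boldsymbol{z}^e,r_h)=\big(b_h(\boldsymbol{z}^e,r_h)-b(\boldsymbol{z},r_h^l)\big)+b(\boldsymbol{z},r_h^l),
\end{align*}
and $b(\boldsymbol{z},r_h^l)=0$ by the dual constraint in (\ref{eq59}), since $\mathrm{div}_\Gamma\boldsymbol{z}=0$. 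The remaining difference equals
\begin{align*}
\int_{\Gamma_h}\big(\mathrm{div}_{\Gamma_h}\boldsymbol{z}^e-(\mathrm{div}_\Gamma\boldsymbol{z})^e\big)r_h\,ds_h+\int_{\Gamma_h}(1-\mu_h)(\mathrm{div}_\Gamma\boldsymbol{z})^e r_h\,ds_h .
\end{align*}
The second integral vanishes. For the first, (\ref{8a}) and norm equivalence give the bound $Ch\|\boldsymbol{z}\|_{\boldsymbol{H}^1(\Gamma)}\|r_h\|_{L^2(\Gamma_h)}$. Next, $\|r_h\|_{L^2}\le\|p_h-p^e\|_{L^2}+\|p^e-I_hp\|_{L^2}\le Ch\|\boldsymbol{f}\|$ by Theorem \ref{thm3} and (\ref{I7}). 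Together with (\ref{eq60}) this yields $Ch^2\|\boldsymbol{f}\|\,\|\boldsymbol{g}\|$.

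\textbf{Step 2 (continuous pressure part).} For $b_h(\boldsymbol{z}^e, I_hp-p^e)$, I would again use $\mathrm{div}_{\Gamma_h}\boldsymbol{z}^e=\big(\mathrm{div}_{\Gamma_h}\boldsymbol{z}^e-(\mathrm{div}_\Gamma\boldsymbol{z})^e\big)+0$. Cauchy–Schwarz with (\ref{8a}) and (\ref{I7}) then gives $Ch\|\boldsymbol{z}\|_{\boldsymbol{H}^1}\cdot Ch\|p\|_{H^1}\le Ch^2\|\boldsymbol{f}\|\,\|\boldsymbol{g}\|$, using (\ref{eq8}) and (\ref{eq60}). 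In fact Steps 1 and 2 can be merged: directly,
\begin{align*}
|b_h(\boldsymbol{z}^e,p_h-p^e)|\le\|\mathrm{div}_{\Gamma_h}\boldsymbol{z}^e-(\mathrm{div}_\Gamma\boldsymbol{z})^e\|_{L^2(\Gamma_h)}\|p_h-p^e\|_{L^2(\Gamma_h)},
\end{align*}
and the first factor is $\le Ch\|\boldsymbol{g}\|$ by the trace bound and (\ref{8a}), while the second is $\le Ch\|\boldsymbol{f}\|$ by Theorem \ref{thm3}. This merged route is the one I would present, so the splitting via $I_h$ serves only as a backup.

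\textbf{Main obstacle.} The delicate point is the pointwise identity $(\mathrm{div}_\Gamma\boldsymbol{z})^e=0$ on $\Gamma_h$, which requires the strong form of the divergence constraint. It does hold: $\mathrm{div}_\Gamma\boldsymbol{z}\in L^2_0(\Gamma)$, and testing the second equation of (\ref{eq59}) with $q=\mathrm{div}_\Gamma\boldsymbol{z}$ gives $\mathrm{div}_\Gamma\boldsymbol{z}=0$ a.e. The second delicate point is that (\ref{8a}) is stated in terms of $\|\boldsymbol{z}^e\|_{\boldsymbol{H}^1(\Gamma_h)}$, which is converted to $\|\boldsymbol{z}\|_{\boldsymbol{H}^2_{\tn}(\Gamma)}$ through (\ref{eq22})–(\ref{eq23}). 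For (\ref{eq73}) the identical argument applies with $\mathrm{div}_\Gamma\boldsymbol{u}=0$, Theorem \ref{thm3} replaced by (\ref{eq62}), and (\ref{eq8}) used for $\boldsymbol{u}$.
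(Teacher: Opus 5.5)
Your merged argument is correct and is essentially the paper's proof. The paper splits $b_h(\boldsymbol{u}^e, w_h-w^e)$ into three terms. The first is the operator-difference term $\text{I}_1$, which it bounds exactly as you do, via (\ref{8a}) together with the energy estimate. The other two are a $(1-\mu_h)$ term $\text{I}_2$, bounded by $O(h^3)$, and a lifted term $\text{I}_3$, shown to vanish via a mean-value decomposition and the divergence theorem. Your observation that $\mathrm{div}_\Gamma \boldsymbol{z}=0$ (resp. $\mathrm{div}_\Gamma \boldsymbol{u}=0$) a.e. makes both of these terms vanish identically, so your version is a streamlined form of the same argument.
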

\begin{proof}
    The two inequalities can be proved analogously; therefore, we only derive the second one. Introducing the term $\int_{\Gamma_h} (\text{div}_{\Gamma} \boldsymbol{u})^e (w_h-w^e ) \hspace{1mm} ds_h$ in the second step, \textcolor{black}{applying a change of variables, and adding and subtracting $\int_{\Gamma} (\text{div}_{\Gamma} \boldsymbol{u})^e (w_h-w^e ) \mu_h \hspace{1mm} ds_h$, we get}
    \begin{equation}
    \begin{split} \label{e70}
       &  b_h(\boldsymbol{u}^e, w_h-w^e)  \\
         & = \underbrace{\int_{\Gamma_h} \left[ \text{div}_{\Gamma_h} \boldsymbol{u}^e - (\text{div}_{\Gamma} \boldsymbol{u})^e \right] (w_h-w^e ) \hspace{1mm} ds_h }_{\text{I}_{1}} + \underbrace{\int_{\Gamma_h} (\text{div}_{\Gamma} \boldsymbol{u})^e (w_h-w^e ) (1-\mu_h) \hspace{1mm} ds_h }_{\text{I}_{2}} \\
         & \quad + \underbrace{\int_{\Gamma} (\text{div}_{\Gamma} \boldsymbol{u}) (w_h^l-w ) \hspace{1mm} ds.}_{\text{I}_{3}} 
         \end{split}
    \end{equation}
   \textcolor{black}{ To estimate $\text{I}_1$, we subsequently apply the Cauchy-Schwarz inequality and the following result in the first step.
    \begin{equation} 
    \label{74b}
       \| \text{div}_{\Gamma_h} (\boldsymbol{u}^e-\boldsymbol{u}_h) \|_{L^2(\Gamma_h)}= \| \text{tr}(\nabla_{\Gamma_h}(\boldsymbol{u}^e-\boldsymbol{u}_h)) \|_{L^2(\Gamma_h)} \leq C \|\nabla_{\Gamma_h}(\boldsymbol{u}^e-\boldsymbol{u}_h) \|_{\boldsymbol{L}^2(\Gamma_h)}. 
    \end{equation} 
    The second bound results by applying the operator difference (\ref{8a}) and the energy error estimate (\ref{eq62}). Employing the regularity estimate (\ref{eq8}) yields the final bound.}
    \begin{equation}
    \begin{split} \label{e71}
        \text{I}_{1} 
        & \leq C \| \nabla_{\Gamma_h} \boldsymbol{u}^e- ( \nabla_{\Gamma} \boldsymbol{u})^e \|_{\boldsymbol{L}^2(\Gamma_h)} \| w_h - w^e \|_{L^2(\Gamma_h)} \\
        & \leq C h^2 \| \boldsymbol{u}^e \|_{\boldsymbol{H}^2(\Gamma_h)} \| \boldsymbol{g} \|_{\boldsymbol{L}^2(\Gamma)} \\
        & \leq C h^2 \| \boldsymbol{f} \|_{\boldsymbol{L}^2(\Gamma)} \| \boldsymbol{g} \|_{\boldsymbol{L}^2(\Gamma)}.
     \end{split}
    \end{equation}
    We estimate $\text{I}_{2}$ by applying the Cauchy-Schwarz inequality, the result (\ref{m1}), the regularity estimate (\ref{eq8}), and the energy error estimate (\ref{eq62}) subsequently.
    \begin{equation}
    \begin{split} \label{e72}
        \text{I}_{2} & \leq \| 1-\mu_h \|_{L^{\infty}(\Gamma_h)} \| (\text{div}_{\Gamma} \boldsymbol{u})^e \|_{\boldsymbol{L}^2(\Gamma_h)} \| w_h - w^e \|_{L^2(\Gamma_h)} \\
        & \leq C h^3 \| \boldsymbol{f} \|_{\boldsymbol{L}^2(\Gamma)} \| \boldsymbol{g} \|_{\boldsymbol{L}^2(\Gamma)}.
     \end{split}
    \end{equation}
    To estimate the last term $\text{I}_{3}$, we introduce the mean value of $w_h^l$, $\bar{w}_h^l = \frac{1}{|\Gamma|} \int_{\Gamma} w_h^l \hspace{1mm} ds$ and obtain
    \begin{equation}
    \begin{split} \label{e73}
        \text{I}_{3} & = \int_{\Gamma} (\text{div}_{\Gamma} \boldsymbol{u}) (w_h^l-\bar{w}_h^l-w) \hspace{1mm} ds +  \int_{\Gamma} (\text{div}_{\Gamma} \boldsymbol{u}) \bar{w}_h^l \, ds 
     \end{split}
    \end{equation}
   \textcolor{black}{ As $(w_h^l-\bar{w}_h^l-w) \in L^2_0(\Gamma)$, the weak formulation (\ref{eq2}) implies $b(\boldsymbol{u}, w_h^l-\bar{w}_h^l-w)=0$. Therefore, the first term in $\text{I}_{3}$ vanishes. By applying the divergence theorem and the fact that $\lJump \boldsymbol{u} \cdot \boldsymbol{n}_{E^l} \rJump$=0, we get }
    \begin{equation}
    \begin{split} \label{e74}
        \text{I}_{3}   = \sum_{E^l \in \mathcal{E}_h^l} \int_{E^l} \bar{w}_h^l \lJump \boldsymbol{u} \cdot \boldsymbol{n}_{E^l} \rJump  =0.
     \end{split}
    \end{equation}
    Combining all the estimates (\ref{e71})-(\ref{e74}) in (\ref{e70}), we finally reach at (\ref{eq73}).
\end{proof}

\begin{lemma} [Dual interpolation estimate] \label{lem16} Let $(\boldsymbol{u},p) \in \boldsymbol{H}^2_{\tn}(\Gamma) \times H^1(\Gamma) \cap L_0^2(\Gamma)$ be the solution of (\ref{eq2}) and $(\boldsymbol{u}^e, p^e)$ be their extensions. For $(\boldsymbol{z}, w) \in \boldsymbol{H}^2_{\text{tan}}(\Gamma) \times H^1(\Gamma) \cap L_0^2(\Gamma)$ the solution of the dual problem (\ref{eq59}) and $(\boldsymbol{z}^e, w^e)$ their extensions, the following estimate holds:
    \begin{equation}
    \label{eq69}
        \mid  b_h( \tilde{\Pi}_h^{tan} \boldsymbol{z}-\boldsymbol{z}^e, p^e ) \mid \leq C h^2 \| \boldsymbol{f} \|_{\boldsymbol{L}^2(\Gamma)} \| \boldsymbol{g} \|_{\boldsymbol{L}^2(\Gamma)}.
    \end{equation}
\end{lemma}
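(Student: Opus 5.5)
We need to bound $b_h(\tilde{\Pi}_h^{\tn}\boldsymbol{z}-\boldsymbol{z}^e, p^e)$ by $h^2$. The key idea is that $p^e$ can be replaced by its piecewise constant projection $I_h p$, for which the divergence preservation property (Lemma \ref{lem5a}) gives an exact identity. The remaining term is a product of two $O(h)$ quantities.

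\textbf{Splitting.} I would write
\begin{equation*}
b_h(\tilde{\Pi}_h^{\tn}\boldsymbol{z}-\boldsymbol{z}^e, p^e) = \underbrace{b_h(\tilde{\Pi}_h^{\tn}\boldsymbol{z}-\boldsymbol{z}^e, p^e - I_h p)}_{\text{J}_1} + \underbrace{b_h(\tilde{\Pi}_h^{\tn}\boldsymbol{z}, I_h p) - b_h(\boldsymbol{z}^e, I_h p)}_{\text{J}_2}.
\end{equation*}
Since $b_h$ is defined elementwise, it makes sense for $\boldsymbol{z}^e$ and $p^e$.

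\textbf{Estimate of $\text{J}_1$.} By Cauchy--Schwarz and the trace bound $\|\text{div}_{\Gamma_h}\boldsymbol{w}\|\le C\|\nabla_{\Gamma_h}\boldsymbol{w}\|$, we get $|\text{J}_1|\le C\|\nabla_{\Gamma_h}(\boldsymbol{z}^e-\tilde{\Pi}_h^{\tn}\boldsymbol{z})\|_{\boldsymbol{L}^2(\Gamma_h)}\|p^e-I_hp\|_{L^2(\Gamma_h)}$. Then (\ref{I1}) and (\ref{I7}) give $Ch^2\|\boldsymbol{z}^e\|_{\boldsymbol{H}^2(\Gamma_h)}\|p^e\|_{H^1(\Gamma_h)}$. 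Norm equivalence (Lemma \ref{lem5}) and the regularity estimates (\ref{eq8}) and (\ref{eq60}) turn this into $Ch^2\|\boldsymbol{f}\|_{\boldsymbol{L}^2(\Gamma)}\|\boldsymbol{g}\|_{\boldsymbol{L}^2(\Gamma)}$. One point to check: $I_hp$ may not have exactly zero mean on $\Gamma_h$. This does not matter, because Lemma \ref{lem5a} only uses elementwise constancy, so its proof applies to any piecewise constant $q_h$.

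\textbf{Estimate of $\text{J}_2$.} Lemma \ref{lem5a} gives $b_h(\tilde{\Pi}_h^{\tn}\boldsymbol{z}, I_hp)=b(\boldsymbol{z},(I_hp)^l)$. Hence
\begin{equation*}
\text{J}_2 = b(\boldsymbol{z},(I_hp)^l) - b_h(\boldsymbol{z}^e, I_hp) = \big((I_hp)^l, \text{div}_\Gamma\boldsymbol{z} - (\text{div}_{\Gamma_h}\boldsymbol{z}^e)^l\big)_\Gamma + \int_{\Gamma_h}(1-\mu_h)\,\text{div}_{\Gamma_h}\boldsymbol{z}^e\, I_hp\,ds_h,
\end{equation*}
after a change of variables. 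The $\mu_h$ term is $O(h^2)\|\boldsymbol{z}\|_{\boldsymbol{H}^1}\|p\|_{L^2}$ by (\ref{m1}).

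\textbf{Main obstacle.} The first term in $\text{J}_2$ is the hard one: Lemma \ref{lemdiv} requires an $H^1$ pressure, but $(I_hp)^l$ is only piecewise constant. I would handle this as follows:
\begin{itemize}
\item Split $(I_hp)^l = p + ((I_hp)^l - p)$.
\item For $p$, Lemma \ref{lemdiv} gives $Ch^2\|\boldsymbol{z}\|_{\boldsymbol{H}^2_{\tn}}\|p\|_{H^1}$.
\item For the remainder, Cauchy--Schwarz together with $\|(I_hp)^l-p\|_{L^2(\Gamma)}\le Ch\|p\|_{H^1}$ (from (\ref{I7}) and (\ref{eq22})) and $\|\text{div}_\Gamma\boldsymbol{z}-(\text{div}_{\Gamma_h}\boldsymbol{z}^e)^l\|_{L^2(\Gamma)}\le Ch\|\boldsymbol{z}\|_{\boldsymbol{H}^1}$ (from (\ref{8a}) after taking traces) gives $O(h^2)$.
\end{itemize}
Since $\text{div}_\Gamma\boldsymbol{z}=0$, these estimates simplify somewhat but are not needed in a special way. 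Collecting all terms and applying the regularity estimates (\ref{eq8}) and (\ref{eq60}) yields (\ref{eq69}).
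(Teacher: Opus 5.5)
Your proof is correct, but it does not follow the paper's route.

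\textbf{Choice of splitting pressure.} The paper splits with the discrete pressure, writing the quantity as $b_h(\tilde{\Pi}_h^{\tn}\boldsymbol{z}-\boldsymbol{z}^e,p^e-p_h)+b_h(\tilde{\Pi}_h^{\tn}\boldsymbol{z}-\boldsymbol{z}^e,p_h)$. It then controls $\|p^e-p_h\|_{L^2(\Gamma_h)}$ by the energy-error estimate (\ref{eq55}). You split with $I_hp$ and only need (\ref{I7}). So your argument is independent of Theorem \ref{thm3}, and hence of the Strang lemma and the discrete Korn inequality the paper defers. It never involves the discrete solution.

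\textbf{The piecewise-constant part.} This is the more substantive difference. The paper reduces to $b(\boldsymbol{z},p_h^l)-b_h(\boldsymbol{z}^e,p_h)$. It then bounds $\int_{\Gamma_h}[(\text{div}_\Gamma\boldsymbol{z})^e-\text{div}_{\Gamma_h}\boldsymbol{z}^e]\,p_h\,ds_h$ by $Ch^2\|\boldsymbol{z}\|_{\boldsymbol{H}^1(\Gamma)}\|p_h\|_{L^2(\Gamma_h)}$, citing Lemma \ref{lemdiv} directly. But that lemma requires $q\in H^1(\Gamma)$, which $p_h$ is not. The pointwise discrepancy $\text{div}_{\Gamma_h}\boldsymbol{z}^e-(\text{div}_\Gamma\boldsymbol{z})^e$ is in general only $O(h)$; the extra power of $h$ in Lemma \ref{lemdiv} comes from the smoothness of $q$.

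The obstacle you identified is therefore real. Your remedy makes this step rigorous: apply Lemma \ref{lemdiv} only to $p$, and bound the remainder $(I_hp)^l-p$ by an $O(h)\cdot O(h)$ Cauchy--Schwarz estimate via (\ref{8a}). The same splitting would repair the paper's version, with $p_h^l-p$ controlled by (\ref{eq55}).

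\textbf{Minor remarks.}
\begin{enumerate}
\item After the change of variables, the $(1-\mu_h)$ term enters $\text{J}_2$ with a minus sign. This does not affect the bound.
\item The mean-value concern is unnecessary. Since $\int_{\Gamma_h} I_hp\,ds_h=\sum_{K}\int_K p^e\mu_h\,ds_h=\int_\Gamma p\,ds=0$, we have $I_hp\in Q_h$ anyway. Your observation that Lemma \ref{lem5a} only uses elementwise constancy is also correct.
\end{enumerate}
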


\begin{proof}
\textcolor{black}{ Introducing $p_h$ in the first step, we bound $\text{I}_4$ by applying the Cauchy-Schwarz inequality and the result (\ref{74b}) in the next step. The last inequality is obtained by employing the interpolation estimate (\ref{I1}) and the energy error estimate (\ref{eq55}).}
    \begin{align*}
         b_h( \tilde{\Pi}_h^{tan} \boldsymbol{z}-\boldsymbol{z}^e, p^e ) & = \underbrace{b_h( \tilde{\Pi}_h^{tan} \boldsymbol{z}-\boldsymbol{z}^e, p^e- p_h )}_{\text{I}_4}+ \underbrace{ b_h( \tilde{\Pi}_h^{tan} \boldsymbol{z}-\boldsymbol{z}^e, p_h )}_{\text{I}_{5}} \\
         & \leq C \| \nabla_{\Gamma_h} (\tilde{\Pi}_h^{tan} \boldsymbol{z}-\boldsymbol{z}^e) \|_{\boldsymbol{L}^2(\Gamma_h)} \| p^e-p_h \|_{L^2(\Gamma_h)} + \text{I}_{5}\\
         & \leq C h^2 \| \boldsymbol{z}^e \|_{\boldsymbol{H}^2(\Gamma_h)} \|p^e \|_{H^1(\Gamma_h)} + \text{I}_{5}.
    \end{align*}
\textcolor{black}{To estimate the remaining term $\text{I}_{5}$, we expand it and by using the result $b_h( \tilde{\Pi}_h^{tan} \boldsymbol{z},p_h)$ = $ b(\boldsymbol{z}, p_h^l)$, applying a change of variable on the first term, and adding and subtracting $\int_{\Gamma_h} (\text{div}_{\Gamma} \boldsymbol{z})^e p_h \, ds_h$ we obtain the second equality, where we use the result (\ref{m1}) to bound the first term and the result in Lemma \ref{lemdiv} to estimate the second term. The last bound is obtained by employing the energy-error estimate (\ref{eq55}) and the regularity estimates $(\ref{eq8})$ and (\ref{eq60}).}
    \begin{align*}
         \text{I}_{5} 
          & = b(\boldsymbol{z}, p_h^l)  - b_h (\boldsymbol{z}^e, p_h ) \\
          & = \int_{\Gamma_h} (\text{div}_{\Gamma} \boldsymbol{z})^e p_h \hspace{1mm} (\mu_h -1) ds_h  + \int_{\Gamma_h} [(\text{div}_{\Gamma} \boldsymbol{z})^e - \text{div}_{\Gamma_h} \boldsymbol{z}^e] p_h \hspace{1mm} ds_h  \\
          & \leq C h^2 \| \boldsymbol{z} \|_{\boldsymbol{H}^1(\Gamma)} \| p_h \|_{L^2(\Gamma_h)} \\
          & \leq C h^2 \| \boldsymbol{z} \|_{\boldsymbol{H}^1(\Gamma)} \| p_h - p^e\|_{L^2(\Gamma_h)} + C h^2 \| \boldsymbol{z} \|_{\boldsymbol{H}^1(\Gamma)} \| p^e \|_{L^2(\Gamma_h)} \\
          & \leq C h^2 \| \boldsymbol{g} \|_{\boldsymbol{L}^2(\Gamma)} \| \boldsymbol{f} \|_{\boldsymbol{L}^2(\Gamma)}.
    \end{align*}
\end{proof}

\begin{lemma}[Primal and dual consistency error] \label{lem17}
     Let $(\boldsymbol{u}^e,p^e)$ and $(\boldsymbol{z}^e,w^e)$ be the extensions of the true solutions $(\boldsymbol{u},p) \in \boldsymbol{H}^2_{\tn}(\Gamma) \times H^1(\Gamma) \cap L_0^2(\Gamma)$ of (\ref{eq2}) and $(\boldsymbol{z},w) \in \boldsymbol{H}^2_{\tn}(\Gamma) \times H^1(\Gamma) \cap L_0^2(\Gamma)$ of the dual problem (\ref{eq59}), respectively. \textcolor{black}{For $(\boldsymbol{u}_h, p_h)$ the discrete solution of (\ref{eq13}) and $(\boldsymbol{z}_h, w_h)$ of (\ref{eq61}), the following estimates hold:}
    \begin{equation}
    \begin{split}
    \label{eq70}
        | a_h(\boldsymbol{u}_h-\boldsymbol{u}^e, \boldsymbol{z}^e)+ & b_h (\boldsymbol{u}_h-\boldsymbol{u}^e, w^e) + (\boldsymbol{u}, \boldsymbol{g})-(\boldsymbol{u}_h, \boldsymbol{g}^e)_{\Gamma_h} | \\
         & \leq C h^2 \| \boldsymbol{f} \|_{\boldsymbol{L}^2(\Gamma)} \| \boldsymbol{g} \|_{\boldsymbol{L}^2(\Gamma)},
    \end{split}
    \end{equation}
    \begin{equation}
    \begin{split}
    \label{eq71}
        | a_h(\boldsymbol{u}^e, \boldsymbol{z}_h- \boldsymbol{z}^e)+ & b_h (\boldsymbol{z}_h-\boldsymbol{z}^e, p^e) + (\boldsymbol{f}, \boldsymbol{z})-(\boldsymbol{f}^e, \boldsymbol{z}_h)_{\Gamma_h} | \\
        & \leq C h^2 \| \boldsymbol{f} \|_{\boldsymbol{L}^2(\Gamma)} \| \boldsymbol{g} \|_{\boldsymbol{L}^2(\Gamma)}.
    \end{split}
    \end{equation}
\end{lemma}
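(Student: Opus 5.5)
I will prove (\ref{eq70}) in detail; (\ref{eq71}) follows by the symmetric argument, swapping the roles of primal and dual data. The plan is to rewrite the quantity in (\ref{eq70}) as a product of two $\mathcal{O}(h)$ factors plus pure geometric $\mathcal{O}(h^2)$ remainders.

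Since $\boldsymbol{z}$ solves (\ref{eq59}) and $\boldsymbol{u}\in\boldsymbol{H}^1_{\tn}(\Gamma)$, testing the dual problem with $\boldsymbol{v}=\boldsymbol{u}$ gives $(\boldsymbol{u},\boldsymbol{g})_\Gamma = a(\boldsymbol{z},\boldsymbol{u}) + b(\boldsymbol{u},w) = a(\boldsymbol{u},\boldsymbol{z})$, because $b(\boldsymbol{u},w)=0$ and $a$ is symmetric. In the same way, testing the discrete dual problem (\ref{eq61}) with $\boldsymbol{v}_h=\boldsymbol{u}_h$ gives
\[ (\boldsymbol{u}_h,\boldsymbol{g}^e)_{\Gamma_h} = a_h(\boldsymbol{z}_h,\boldsymbol{u}_h)+b_h(\boldsymbol{u}_h,w_h). \]
Here $b_h(\boldsymbol{u}_h,w_h)=0$ by the discrete primal mass equation (\ref{eq13}). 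Substituting both identities, the expression in (\ref{eq70}) becomes
\[ a_h(\boldsymbol{u}_h,\boldsymbol{z}^e-\boldsymbol{z}_h) - a_h(\boldsymbol{u}^e,\boldsymbol{z}^e) + a(\boldsymbol{u},\boldsymbol{z}) + b_h(\boldsymbol{u}_h-\boldsymbol{u}^e,w^e). \]
The middle pair $a(\boldsymbol{u},\boldsymbol{z})-a_h(\boldsymbol{u}^e,\boldsymbol{z}^e)$ is bounded by $Ch^2\|\boldsymbol{u}\|_{\boldsymbol{H}^2_{\tn}}\|\boldsymbol{z}\|_{\boldsymbol{H}^2_{\tn}}$ through (\ref{eq51a}). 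With the regularity estimates (\ref{eq8}) and (\ref{eq60}) this is $\le Ch^2\|\boldsymbol{f}\|\|\boldsymbol{g}\|$.

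For the pressure term I use $b_h(\boldsymbol{u}_h,w_h)=0$ again to write $b_h(\boldsymbol{u}_h,w^e)=b_h(\boldsymbol{u}_h,w^e-w_h)$. Hence
\[ b_h(\boldsymbol{u}_h-\boldsymbol{u}^e,w^e) = b_h(\boldsymbol{u}_h-\boldsymbol{u}^e,w^e-w_h) + b_h(\boldsymbol{u}^e,w^e-w_h). \]
The first piece is bounded by Cauchy--Schwarz together with (\ref{74b}), (\ref{eq55}) and (\ref{eq62}), giving $Ch^2\|\boldsymbol{f}\|\|\boldsymbol{g}\|$. The second piece is exactly (\ref{eq73}) of Lemma \ref{lem18}.

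It remains to handle $a_h(\boldsymbol{u}_h,\boldsymbol{z}^e-\boldsymbol{z}_h)$. I split it as
\[ a_h(\boldsymbol{u}_h-\boldsymbol{u}^e,\boldsymbol{z}^e-\boldsymbol{z}_h)+a_h(\boldsymbol{u}^e,\boldsymbol{z}^e-\boldsymbol{z}_h). \]
The first piece is at most $C\|\boldsymbol{u}^e-\boldsymbol{u}_h\|_h\|\boldsymbol{z}^e-\boldsymbol{z}_h\|_h\le Ch^2\|\boldsymbol{f}\|\|\boldsymbol{g}\|$, by continuity of $a_h$ in $\|\cdot\|_h$, (\ref{eq55}) and (\ref{eq62}).

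The second piece is the main obstacle. Lemma \ref{lem12} can only be applied to discrete arguments, so I insert the interpolant and write $\boldsymbol{z}^e-\boldsymbol{z}_h=(\boldsymbol{z}^e-\tilde{\Pi}_h^{\tn}\boldsymbol{z})+\boldsymbol{\xi}_h$ with $\boldsymbol{\xi}_h=\tilde{\Pi}_h^{\tn}\boldsymbol{z}-\boldsymbol{z}_h\in\mathbf{V}_h$.

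For the discrete part I apply the primal nonconformity identity of Lemma \ref{lem12} to $\boldsymbol{\xi}_h$:
\[ a_h(\boldsymbol{u}^e,\boldsymbol{\xi}_h)=(\boldsymbol{f}^e,\boldsymbol{\xi}_h)-b_h(\boldsymbol{\xi}_h,p^e)+R(\boldsymbol{\xi}_h), \qquad |R(\boldsymbol{\xi}_h)|\le Ch\|\boldsymbol{f}\|\|\boldsymbol{\xi}_h\|_h. \]
By (\ref{I6}) and (\ref{eq62}) we have $\|\boldsymbol{\xi}_h\|_h\le Ch\|\boldsymbol{g}\|$, so $R(\boldsymbol{\xi}_h)$ is $\mathcal{O}(h^2)$. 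For the remaining terms:
\begin{itemize}
\item $b_h(\boldsymbol{\xi}_h,p^e)$ splits into $b_h(\tilde{\Pi}_h^{\tn}\boldsymbol{z}-\boldsymbol{z}^e,p^e)$, which is handled by Lemma \ref{lem16}, and $b_h(\boldsymbol{z}^e-\boldsymbol{z}_h,p^e)$, which is treated exactly like the pressure term above using (\ref{eq72}).
\item $(\boldsymbol{f}^e,\boldsymbol{\xi}_h)$ is bounded by the $L^2$ smallness of $\tilde{\Pi}_h^{\tn}\boldsymbol{z}-\boldsymbol{z}^e$ from (\ref{I4}), plus a term $(\boldsymbol{f}^e,\boldsymbol{z}^e-\boldsymbol{z}_h)$. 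This last term does not cancel. The saving observation is that $(\boldsymbol{f}^e,\boldsymbol{z}^e-\boldsymbol{z}_h)$ is precisely the quantity controlled by the symmetric estimate (\ref{eq71}), up to the $\mathcal{O}(h^2)$ geometric term of Lemma \ref{I9}.
\end{itemize}
I therefore expect to prove (\ref{eq70}) and (\ref{eq71}) jointly, or to first derive an $\mathcal{O}(h^2)$ bound for $(\boldsymbol{f}^e,\boldsymbol{z}^e-\boldsymbol{z}_h)$ by the same duality trick with the roles of the primal and dual problems reversed.

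The interpolation remainder $a_h(\boldsymbol{u}^e,\boldsymbol{z}^e-\tilde{\Pi}_h^{\tn}\boldsymbol{z})$ is handled by integrating by parts elementwise. This produces a volume term, controlled by (\ref{I2}) and (\ref{I4}), and edge terms involving $\nabla_\Gamma\boldsymbol{u}\,\boldsymbol{n}_{E^l}$. In the edge terms I subtract edge averages, using the Crouzeix--Raviart mean-value properties (\ref{eq33})--(\ref{eq34}), the edge estimates (\ref{I3}) and (\ref{I5}), and the conormal approximation (\ref{eq12}), to obtain $Ch^2\|\boldsymbol{u}\|_{\boldsymbol{H}^2}\|\boldsymbol{z}\|_{\boldsymbol{H}^2}$. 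The care required with the discontinuous conormals in these edge terms is where I expect most of the work to lie.
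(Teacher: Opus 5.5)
The identities you start from are correct, but the argument does not close, and the step meant to close it is circular.

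Write $\mathcal{R}_1$ for the expression inside the absolute value in (\ref{eq70}). Once everything you correctly show to be $\mathcal{O}(h^2)$ is removed, your computation gives $\mathcal{R}_1=(\boldsymbol{f}^e,\boldsymbol{z}^e-\boldsymbol{z}_h)_{\Gamma_h}+\mathcal{O}(h^2)\|\boldsymbol{f}\|_{\boldsymbol{L}^2(\Gamma)}\|\boldsymbol{g}\|_{\boldsymbol{L}^2(\Gamma)}$.

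Your ``saving observation'' is false. (\ref{eq71}) controls $(\boldsymbol{f}^e,\boldsymbol{z}^e-\boldsymbol{z}_h)_{\Gamma_h}$ only in combination with $a_h(\boldsymbol{u}^e,\boldsymbol{z}_h-\boldsymbol{z}^e)+b_h(\boldsymbol{z}_h-\boldsymbol{z}^e,p^e)$. Your treatment of $a_h(\boldsymbol{u}^e,\boldsymbol{z}^e-\boldsymbol{z}_h)$ through $\boldsymbol{\xi}_h$ and Lemma~\ref{lem12} is already exactly a proof of that combined statement, so nothing further can be extracted from it.

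In fact the leftover term is not small independently of the $L^2$-estimate. Testing the continuous and discrete primal problems with $\boldsymbol{z}$ and $\boldsymbol{z}_h$ gives $(\boldsymbol{f},\boldsymbol{z})_\Gamma=a(\boldsymbol{u},\boldsymbol{z})$ and $(\boldsymbol{f}^e,\boldsymbol{z}_h)_{\Gamma_h}=a_h(\boldsymbol{u}_h,\boldsymbol{z}_h)$. Combined with your two dual identities,
\[
(\boldsymbol{f},\boldsymbol{z})_\Gamma-(\boldsymbol{f}^e,\boldsymbol{z}_h)_{\Gamma_h}=a(\boldsymbol{u},\boldsymbol{z})-a_h(\boldsymbol{u}_h,\boldsymbol{z}_h)=(\boldsymbol{u},\boldsymbol{g})_\Gamma-(\boldsymbol{u}_h,\boldsymbol{g}^e)_{\Gamma_h}.
\]
By Lemma~\ref{I9} and $\boldsymbol{g}^e=\mathbf{P}_h(\boldsymbol{u}^e-\boldsymbol{u}_h)$, this gives $(\boldsymbol{f}^e,\boldsymbol{z}^e-\boldsymbol{z}_h)_{\Gamma_h}=\|\mathbf{P}_h(\boldsymbol{u}^e-\boldsymbol{u}_h)\|^2_{\boldsymbol{L}^2(\Gamma_h)}+\mathcal{O}(h^2)\|\boldsymbol{f}\|\,\|\boldsymbol{g}\|$. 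Bounding it is therefore equivalent to Theorem~\ref{thm4}, which this lemma is supposed to feed.

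The mirrored argument for (\ref{eq71}) fails the same way: it leaves $(\boldsymbol{g}^e,\boldsymbol{u}^e-\boldsymbol{u}_h)_{\Gamma_h}=\|\boldsymbol{g}^e\|^2_{\boldsymbol{L}^2(\Gamma_h)}$. So ``proving them jointly'' only produces two expressions, each equal to $\|\boldsymbol{g}^e\|^2+\mathcal{O}(h^2)$, and concluding needs an independent bound on $\|\boldsymbol{g}^e\|$, i.e.\ the $L^2$-estimate itself. At best, inserting both reductions into the Aubin--Nitsche identity of Theorem~\ref{thm4} would prove that estimate directly. The lemma would then be a corollary rather than an ingredient, which is not what you propose.

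The culprit is your first move. Replacing $(\boldsymbol{u}_h,\boldsymbol{g}^e)_{\Gamma_h}$ by $a_h(\boldsymbol{z}_h,\boldsymbol{u}_h)$ destroys the pairing of the load term with $a_h(\cdot,\boldsymbol{z}^e)$ and $b_h(\cdot,w^e)$. The paper keeps that pairing. For (\ref{eq71}) it applies your splitting through $\tilde{\Pi}_h^{\tn}\boldsymbol{z}$ directly to the stated expression. Then $a_h(\boldsymbol{u}^e,\boldsymbol{z}_h-\tilde{\Pi}_h^{\tn}\boldsymbol{z})+b_h(\boldsymbol{z}_h-\tilde{\Pi}_h^{\tn}\boldsymbol{z},p^e)-(\boldsymbol{f}^e,\boldsymbol{z}_h-\tilde{\Pi}_h^{\tn}\boldsymbol{z})_{\Gamma_h}$ is a single instance of Lemma~\ref{lem12}, bounded by $Ch\|\boldsymbol{f}\|\cdot Ch\|\boldsymbol{g}\|$ via (\ref{eq62}), (\ref{I6}) and (\ref{eq60}). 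What remains are pure interpolation and geometric terms:
\begin{itemize}
\item $a_h(\boldsymbol{u}^e,\tilde{\Pi}_h^{\tn}\boldsymbol{z}-\boldsymbol{z}^e)$, handled by your edge-average argument or Lemma~5.4 of \cite{mehlmann2023};
\item $b_h(\tilde{\Pi}_h^{\tn}\boldsymbol{z}-\boldsymbol{z}^e,p^e)$, handled by Lemma~\ref{lem16};
\item $(\boldsymbol{f},\boldsymbol{z})_\Gamma-(\boldsymbol{f}^e,\boldsymbol{z}^e)_{\Gamma_h}$, handled by Lemma~\ref{I9};
\item $(\boldsymbol{f}^e,\boldsymbol{z}^e-\tilde{\Pi}_h^{\tn}\boldsymbol{z})_{\Gamma_h}$.
\end{itemize}
For (\ref{eq70}) one splits $\boldsymbol{u}_h-\boldsymbol{u}^e$ through $\tilde{\Pi}_h^{\tn}\boldsymbol{u}$ and uses the dual estimate (\ref{eq63}). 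No discrete equation is used to eliminate a load term, so no $L^2$-type quantity appears, and most of your pieces carry over.

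Two minor slips:
\begin{itemize}
\item Your splitting of $b_h(\boldsymbol{u}_h-\boldsymbol{u}^e,w^e)$ drops $-b_h(\boldsymbol{u}^e,w^e)$. This is harmless, since it is $\mathcal{O}(h^2)$ by (\ref{eq65}) and $b(\boldsymbol{u},w)=0$.
\item $(\boldsymbol{f}^e,\boldsymbol{z}^e-\tilde{\Pi}_h^{\tn}\boldsymbol{z})_{\Gamma_h}$ should be bounded with $\mathbf{P}^e\boldsymbol{f}^e=\boldsymbol{f}^e$ and (\ref{I2}). (\ref{I4}) alone does not suffice, because $\boldsymbol{f}^e$ is tangential to $\Gamma$, not to $\Gamma_h$.
\end{itemize}
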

\begin{proof} It suffices to establish (\ref{eq71}), since the proof of (\ref{eq70}) proceeds in an entirely analogous manner. We decompose the error $(\boldsymbol{z}_h- \boldsymbol{z}^e)$ appearing in the bilinear forms into the auxiliary error $(\boldsymbol{z}_h- \tilde{\Pi}_h^{\text{tan}} \boldsymbol{z})$ and the interpolation error $(\tilde{\Pi}_h^{\text{tan}} \boldsymbol{z} - \boldsymbol{z}^e)$. The terms are then further rearranged by introducing $\tilde{\Pi}_h^{\text{tan}} \boldsymbol{z}$ into the expression $(\boldsymbol{f}^e, \boldsymbol{z}_h)_{\Gamma_h}$.
    \begin{align*}
        &  a_h(\boldsymbol{u}^e, \boldsymbol{z}_h- \boldsymbol{z}^e)+ b_h (\boldsymbol{z}_h-\boldsymbol{z}^e, p^e) + (\boldsymbol{f}, \boldsymbol{z})-(\boldsymbol{f}^e, \boldsymbol{z}_h)_{\Gamma_h} \\
        & = \underbrace{ \left[  a_h(\boldsymbol{u}^e, \boldsymbol{z}_h- \tilde{\Pi}_h^{\text{tan}} \boldsymbol{z}) +  b_h (\boldsymbol{z}_h- \tilde{\Pi}_h^{\text{tan}} \boldsymbol{z}, p^e) -(\boldsymbol{f}^e, \boldsymbol{z}_h-\tilde{\Pi}_h^{\text{tan}} \boldsymbol{z})_{\Gamma_h} \right]}_{\text{I}_{6}} + \\
        & \qquad \underbrace{ a_h(\boldsymbol{u}^e, \tilde{\Pi}_h^{\text{tan}} \boldsymbol{z} - \boldsymbol{z}^e) }_{\text{I}_{7}}+ \underbrace{ b_h ( \tilde{\Pi}_h^{\text{tan}} \boldsymbol{z}- \boldsymbol{z}^e, p^e)}_{\text{I}_{8}} + \underbrace{(\boldsymbol{f}, \boldsymbol{z})_{\Gamma} - (\boldsymbol{f}^e, \boldsymbol{z}^e)_{\Gamma_h}}_{\text{I}_{9}} + \\
        & \qquad  \underbrace{(\boldsymbol{f}^e,  \boldsymbol{z}^e- \tilde{\Pi}_h^{\text{tan}} \boldsymbol{z} ). }_{\text{I}_{10}}
    \end{align*}
    We estimate each of the terms separately. \textcolor{black}{To estimate $\text{I}_{6}$, in the first step we apply the consistency error estimate (\ref{eq53}) for $\boldsymbol{v}_h = (\boldsymbol{z}_h - \tilde{\Pi}_h^{\text{tan}} \boldsymbol{z})$. The last bound is obtained by subsequently applying the triangle inequality, the energy error estimate (\ref{eq62}), the interpolation estimate (\ref{I6}) and the regularity estimate (\ref{eq60}).}
\begin{align*}
    \text{I}_{6} & \leq C h \|\boldsymbol{u} \|_{\boldsymbol{H}^2(\Gamma)}  \| \boldsymbol{z}_h - \tilde{\Pi}_h^{\text{tan}} \boldsymbol{z} \|_h \\
    & \leq C h \|\boldsymbol{u} \|_{\boldsymbol{H}^2(\Gamma)} \left( \| \boldsymbol{z}_h - \boldsymbol{z}^e \|_h +  \| \boldsymbol{z}^e - \tilde{\Pi}_h^{\text{tan}} \boldsymbol{z} \|_h\right) \\
    & \leq C h^2 \|\boldsymbol{f} \|_{\boldsymbol{L}^2(\Gamma)} \|\boldsymbol{g} \|_{\boldsymbol{L}^2(\Gamma)}.
\end{align*}
\textcolor{black}{The bound for $\text{I}_7$ is derived by decomposing the symmetric tensor into its gradient and estimating the resulting terms by combining the primal-dual interpolation estimation given in Lemma 5.4 in \cite{mehlmann2023} with the fact that $\lJump \boldsymbol{u}^e \rJump$=0. Moreover, I$_{8}$ and I$_{9}$ are bounded by combining the regularity estimate (\ref{eq60}) with the Lemma \ref{lem16} and Lemma \ref{I9}, respectively.} \vspace{1mm}\\
In the end, using $\mathbf{P}^e \boldsymbol{f}^e= \boldsymbol{f}^e$, applying the Cauchy-Schwarz inequality, the interpolation estimate (\ref{I2}), the norm equivalence result (\ref{eq22}) and the regularity result (\ref{eq60}), we get
\begin{align*}
    \text{I}_{10}  = (\mathbf{P}^e \boldsymbol{f}^e, \boldsymbol{z}^e- \tilde{
    \Pi}_h^{tan} \boldsymbol{z}) 
    &  \leq \| \boldsymbol{f}^e \|_{\boldsymbol{L}^2(\Gamma_h)} \|\mathbf{P}^e(\boldsymbol{z}^e- \tilde{\Pi}_h^{tan} \boldsymbol{z}) \|_{\boldsymbol{L}^2(\Gamma_h)} \\
    & \leq C h^2 \| \boldsymbol{f} \|_{\boldsymbol{L}^2(\Gamma)} \| \boldsymbol{g} \|_{\boldsymbol{L}^2(\Gamma)}.
\end{align*}
\end{proof}

\begin{theorem}[$L^2$ Error Estimate]\label{thm4} Let $(\boldsymbol{u}, p) \in \boldsymbol{H}^2_{\tn}(\Gamma) \times H^1(\Gamma) \cap L_0^2(\Gamma)$ be the primal solution to (\ref{eq2}) and $(\boldsymbol{u}^e,p^e)$ be their extension. For $(\boldsymbol{u}_h,p_h)$ the discrete solution of (\ref{eq13}), the following error estimate holds:
    \begin{equation}
    \label{eq74}
        \| \mathbf{P}_h (\boldsymbol{u}^e - \boldsymbol{u}_h)\|_{\boldsymbol{L}^2(\Gamma_h)} \leq C h^2 \| \boldsymbol{f} \|_{\boldsymbol{L}^2(\Gamma)}.
    \end{equation}
\end{theorem}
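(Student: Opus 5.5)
We follow the Aubin--Nitsche duality argument with the dual problem (\ref{eq59}) and datum $\boldsymbol{g}=\mathbf{P}_h^l(\boldsymbol{u}-\boldsymbol{u}_h^l)$. By Lemma \ref{I9} and (\ref{m1}), $\|\mathbf{P}_h(\boldsymbol{u}^e-\boldsymbol{u}_h)\|^2_{\boldsymbol{L}^2(\Gamma_h)}$ differs from $\|\boldsymbol{g}\|_{\boldsymbol{L}^2(\Gamma)}^2$ only by a factor $1+O(h^2)$. Moreover, since $\boldsymbol{u}_h$ is tangential to $\Gamma_h$, we have $(\boldsymbol{u}-\boldsymbol{u}_h^l,\boldsymbol{g})_\Gamma=\|\boldsymbol{g}\|^2_{\boldsymbol{L}^2(\Gamma)}+((\mathbf{I}-\mathbf{P}_h^l)\boldsymbol{u},\boldsymbol{g})_\Gamma$. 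The last term is bounded by $\|(\mathbf{P}-\mathbf{P}_h)\boldsymbol{u}\|\,\|\boldsymbol{g}\|$. Here one uses $\mathbf{P}\boldsymbol{u}=\boldsymbol{u}$, so that $(\mathbf{I}-\mathbf{P}_h)\boldsymbol{u}=(\mathbf{P}-\mathbf{P}_h)\boldsymbol{u}$ is normal to $\Gamma_h$ and is orthogonal to $\mathbf{P}_h(\cdot)$ pointwise. In fact $((\mathbf{I}-\mathbf{P}_h^l)\boldsymbol{u},\mathbf{P}_h^l(\cdot))=0$ exactly. It therefore suffices to bound $(\boldsymbol{u},\boldsymbol{g})_\Gamma-(\boldsymbol{u}_h,\boldsymbol{g}^e)_{\Gamma_h}$ by $Ch^2\|\boldsymbol{f}\|\|\boldsymbol{g}\|$, up to an $O(h^2)\|\boldsymbol{f}\|\|\boldsymbol{g}\|$ geometric remainder handled by Lemma \ref{I9}.

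Next we express this quantity through the error equations. Subtracting the primal consistency identity (\ref{eq70}) leaves $a_h(\boldsymbol{u}^e-\boldsymbol{u}_h,\boldsymbol{z}^e)+b_h(\boldsymbol{u}^e-\boldsymbol{u}_h,w^e)$ to control, up to $Ch^2\|\boldsymbol{f}\|\|\boldsymbol{g}\|$. We insert the discrete dual solution and write $\boldsymbol{z}^e=(\boldsymbol{z}^e-\boldsymbol{z}_h)+\boldsymbol{z}_h$ and $w^e=(w^e-w_h)+w_h$.

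\emph{Discrete dual part.} Test (\ref{eq61}) with $\boldsymbol{v}_h=\boldsymbol{u}_h$ and (\ref{eq13}) with $\boldsymbol{v}_h=\boldsymbol{z}_h$. Together with $b_h(\boldsymbol{u}_h,w_h)=b_h(\boldsymbol{z}_h,p_h)=0$ and the symmetry of $a_h$, the terms $a_h(\boldsymbol{u}_h,\boldsymbol{z}_h)$ become $(\boldsymbol{f}^e,\boldsymbol{z}_h)$. The remaining pieces $a_h(\boldsymbol{u}^e,\boldsymbol{z}_h)+b_h(\boldsymbol{z}_h,p^e)$ and $b_h(\boldsymbol{u}^e,w_h)$ are then combined with the dual consistency estimate (\ref{eq71}), which replaces $(\boldsymbol{f}^e,\boldsymbol{z}_h)$ by $(\boldsymbol{f},\boldsymbol{z})$ at cost $h^2$. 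The pressure mismatches $b_h(\boldsymbol{u}^e,w_h-w^e)$ and $b_h(\boldsymbol{z}^e,p_h-p^e)$ are controlled by Lemma \ref{lem18}. The continuous remainders $a(\boldsymbol{u},\boldsymbol{z})$, $b(\boldsymbol{u},w)$ and $(\boldsymbol{f},\boldsymbol{z})$ cancel via the continuous primal and dual equations. Their discrete counterparts $a_h(\boldsymbol{u}^e,\boldsymbol{z}^e)$ and $b_h(\boldsymbol{u}^e,w^e)$ differ from these by $Ch^2$, using (\ref{eq51a}) and (\ref{eq65}) together with (\ref{eq8}) and (\ref{eq60}).

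\emph{Cross term.} The remaining product $a_h(\boldsymbol{u}^e-\boldsymbol{u}_h,\boldsymbol{z}^e-\boldsymbol{z}_h)+b_h(\boldsymbol{u}^e-\boldsymbol{u}_h,w^e-w_h)$ is bounded by continuity of $a_h,b_h$ in $\|\cdot\|_h$ and the energy estimates (\ref{eq55}) and (\ref{eq62}), giving $Ch\|\boldsymbol{f}\|\cdot Ch\|\boldsymbol{g}\|$. If a $b_h(\boldsymbol{z}^e-\boldsymbol{z}_h,p^e)$-type term survives, it is handled with Lemma \ref{lem16} after inserting $\tilde\Pi_h^{\tn}\boldsymbol{z}$.

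Collecting terms gives $\|\boldsymbol{g}\|^2\le Ch^2\|\boldsymbol{f}\|\|\boldsymbol{g}\|$. Dividing by $\|\boldsymbol{g}\|$ and transferring back to $\Gamma_h$ with (\ref{eq22}) yields (\ref{eq74}). The main obstacle is the bookkeeping in the middle step: making sure every term is either an exact cancellation or is literally one of the quantities bounded in Lemmas \ref{lem18}--\ref{lem17}. In particular, the nonconformity terms must not be bounded only at first order. If a leftover appears, the first remedy is to insert $\tilde\Pi_h^{\tn}\boldsymbol{z}$ and use Galerkin orthogonality of the discrete primal problem.
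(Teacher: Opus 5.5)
Your proposal is correct and follows essentially the same Aubin--Nitsche argument as the paper. It uses the same dual datum, the same reduction to $(\boldsymbol{u},\boldsymbol{g})_\Gamma-(\boldsymbol{u}_h,\boldsymbol{g}^e)_{\Gamma_h}$, and the same ingredients: (\ref{eq70})--(\ref{eq71}), Lemma~\ref{lem18}, (\ref{eq51a}), (\ref{eq65}) and products of energy errors. You organize it by substitution rather than by the paper's single add-and-subtract identity. The term you hedge on does survive as $b_h(\boldsymbol{z}^e-\boldsymbol{z}_h,p^e)$; since $b_h(\boldsymbol{z}_h,p_h)=0$, it splits into $b_h(\boldsymbol{z}^e-\boldsymbol{z}_h,p^e-p_h)+b_h(\boldsymbol{z}^e,p_h-p^e)+[b_h(\boldsymbol{z}^e,p^e)-b(\boldsymbol{z},p)]$, which are exactly the paper's $\text{I}_{16}$, $\text{I}_{18}$, $\text{I}_{13}$. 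No detour through Lemma~\ref{lem16} is needed, and you should add $\text{I}_{13}$, which your list of geometric remainders omits.
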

\begin{proof}
 \begin{equation}
     \begin{split} \label{eq74a}
         \| \textbf{P}_h (\boldsymbol{u}^e - \boldsymbol{u}_h)\|_{\boldsymbol{L}^2(\Gamma_h)}^2 & = (\textbf{P}_h (\boldsymbol{u}^e - \boldsymbol{u}_h), \textbf{P}_h (\boldsymbol{u}^e - \boldsymbol{u}_h))_{\Gamma_h} \\
        & = (\boldsymbol{u}^e - \boldsymbol{u}_h, (\textbf{P}_h)^T \textbf{P}_h (\boldsymbol{u}^e - \boldsymbol{u}_h))_{\Gamma_h}\\
        & = (\boldsymbol{u}^e - \boldsymbol{u}_h,  \textbf{P}_h (\boldsymbol{u}^e - \boldsymbol{u}_h))_{\Gamma_h}\\
        & = (\boldsymbol{u}^e - \boldsymbol{u}_h, \boldsymbol{g}^e)_{\Gamma_h} \\
        & = \underbrace{(\boldsymbol{u}^e, \boldsymbol{g}^e)_{\Gamma_h}-(\boldsymbol{u}, \boldsymbol{g})_{\Gamma}}_{\text{I}_{11}} + (\boldsymbol{u}, \boldsymbol{g})_{\Gamma}-(\boldsymbol{u}_h, \boldsymbol{g}^e)_{\Gamma_h}.
     \end{split}
 \end{equation}
To treat the terms in (\ref{eq74a}) other than the geometric error in $\text{I}_{11}$, we now step-wise introduce a sequence of auxiliary terms. In this context, we first add and subtract appropriate geometric error terms and collect a set of them in $\text{I}_{12}$-$\text{I}_{14}$.
\begin{align*}
   & \| \textbf{P}_h (\boldsymbol{u}^e - \boldsymbol{u}_h)\|_{\boldsymbol{L}^2(\Gamma_h)}^2 \\
    & = \text{I}_{11} +  (\boldsymbol{u}, \boldsymbol{g})_{\Gamma}-(\boldsymbol{u}_h, \boldsymbol{g}^e)_{\Gamma_h} +\underbrace{a_h(\boldsymbol{u}^e, \boldsymbol{z}^e)- a(\boldsymbol{u}, \boldsymbol{z})}_{\text{I}_{12}}  \\
    & \quad + \underbrace{b_h(\boldsymbol{z}^e, p^e)- b(\boldsymbol{z}, p)}_{\text{I}_{13}} + \underbrace{b_h(\boldsymbol{u}^e, w^e)- b(\boldsymbol{u}, w)}_{\text{I}_{14}} \\
    & \quad + [a(\boldsymbol{u}, \boldsymbol{z})+ b(\boldsymbol{z}, p)+ b(\boldsymbol{u}, w)-a_h(\boldsymbol{u}^e, \boldsymbol{z}^e)- b_h(\boldsymbol{z}^e, p^e)- b_h(\boldsymbol{u}^e, w^e) ] .
\end{align*}
We employ the relation $ a(\boldsymbol{u}, \boldsymbol{z})+ b(\boldsymbol{z}, p)+ b(\boldsymbol{u}, w)= (\boldsymbol{f}, \boldsymbol{z})_{\Gamma} $ and further introduce the primal-dual error terms $\text{I}_{15}$-$\text{I}_{17}$ in the following.
\begin{align*}
    \| \textbf{P}_h (\boldsymbol{u}^e - \boldsymbol{u}_h)\|_{\boldsymbol{L}^2(\Gamma_h)}^2 & = \sum_{i=1}^4 \text{I}_i +  (\boldsymbol{u}, \boldsymbol{g})-(\boldsymbol{u}_h, \boldsymbol{g}^e)_{\Gamma_h}\\
    & \quad + [(\boldsymbol{f}, \boldsymbol{z})_{\Gamma}-a_h(\boldsymbol{u}^e, \boldsymbol{z}^e)- b_h(\boldsymbol{z}^e, p^e)- b_h(\boldsymbol{u}^e, w^e) ] \\
    & \quad + \underbrace{a_h(\boldsymbol{u}^e-\boldsymbol{u}_h, \boldsymbol{z}^e-\boldsymbol{z}_h)}_{\text{I}_{15}} - a_h(\boldsymbol{u}^e-\boldsymbol{u}_h, \boldsymbol{z}^e-\boldsymbol{z}_h) \\
        & \quad + \underbrace{b_h(\boldsymbol{z}^e-\boldsymbol{z}_h, p^e-p_h)}_{\text{I}_{16}} - b_h(\boldsymbol{z}^e-\boldsymbol{z}_h, p^e-p_h) \\
        & \quad + \underbrace{b_h(\boldsymbol{u}^e-\boldsymbol{u}_h, w^e-w_h)}_{\text{I}_{17}} - b_h(\boldsymbol{u}^e-\boldsymbol{u}_h, w^e-w_h)   .
\end{align*}
    We next rearrange the terms and group them so that the consistency error terms $\text{I}_{20}$ and $\text{I}_{21}$ can be identified.
    \begin{align*}
            \| \textbf{P}_h (\boldsymbol{u}^e - \boldsymbol{u}_h)\|_{\boldsymbol{L}^2(\Gamma_h)}^2 & = \sum_{i=1}^7 \text{I}_i + \underbrace{b_h(\boldsymbol{z}^e, p_h-p^e)}_{\text{I}_{18}} + \underbrace{b_h(\boldsymbol{u}^e, w_h-w^e)}_{\text{I}_{19}} \\
        & \quad + \underbrace{[a_h(\boldsymbol{u}_h - \boldsymbol{u}^e, \boldsymbol{z}^e) + b_h(\boldsymbol{u}_h - \boldsymbol{u}^e, w^e) + (\boldsymbol{u}, \boldsymbol{g})-(\boldsymbol{u}_h, \boldsymbol{g}^e)_{\Gamma_h}]}_{\text{I}_{20}} \\
        & \quad + \underbrace{[a_h(\boldsymbol{u}^e, \boldsymbol{z}_h - \boldsymbol{z}^e) + b_h(\boldsymbol{z}_h - \boldsymbol{z}^e, p^e) + (\boldsymbol{f}, \boldsymbol{z})_{\Gamma}-(\boldsymbol{f}^e, \boldsymbol{z}_h)_{\Gamma_h}],}_{\text{I}_{21}} \numberthis \label{eq61a}
    \end{align*}
    where \textcolor{black}{we use that} $(\boldsymbol{f}^e,\boldsymbol{z}_h)_{\Gamma_h}= a_h(\boldsymbol{u}_h, \boldsymbol{z}_h)+ b(\boldsymbol{z}_h, p_h)+ b(\boldsymbol{u}_h, w_h)$. \vspace{1mm}\\
The estimations of the geometric error terms $\text{I}_{11}$ and $\text{I}_{12}$ \textcolor{black}{result by combining} Lemma \ref{I9} and  Lemma \ref{lem10} (see (\ref{eq51a})), respectively, with the regularity estimates (\ref{eq8}) and (\ref{eq60}).
   Besides, the derivations of the estimates for the geometric errors of the mass conservation term $ \text{I}_{13}$ and $\text{I}_{14}$ are provided in Lemma \ref{lem11}. To estimate the primal-dual error terms $\text{I}_{15}$-$\text{I}_{17}$,\textcolor{black}{ we subsequently apply the Cauchy-Schwarz inequality, the result (\ref{74b}) together with the energy error estimates (\ref{eq55}) and (\ref{eq62}).}
    Furthermore, the estimations of the terms $\text{I}_{18}$ and $\text{I}_{19}$ are given in Lemma \ref{lem18} and the estimates for $\text{I}_{20}$ and $\text{I}_{21}$ are derived in Lemma \ref{lem17}. On combining all these results and using $\textbf{P}_h(\boldsymbol{u}^e-\boldsymbol{u}_h)$= $\boldsymbol{g}^e$, we obtain (\ref{eq74}).
\end{proof}

\section{Numerical experiment} \label{sec6}
In this section, we numerically verify the theoretical error estimates established in the previous sections and demonstrate the robustness of the surface Crouzeix-Raviart method on surfaces with different geometric characteristics. We consider two benchmark closed surfaces: the unit sphere, denoted by $\Gamma^{\text{S}}$, where
\begin{equation}
    \label{84}
    \Gamma^{\text{S}}=\{ \boldsymbol{x} = (x,y,z) \in \mathbb{R}^3: \hspace{1mm} x^2+y^2+z^2 = 1 \},
\end{equation}
and a torus with major radius 1 and minor radius 0.5, denoted by $\Gamma^{\text{T}}$, where
\begin{equation}
    \label{85}
    \Gamma^{\text{T}}= \{ \boldsymbol{x} = (x,y,z) \in \mathbb{R}^3: \hspace{1mm} \left( \sqrt{x^2+y^2} -1 \right)^2 + z^2 = \frac{1}{4}\}.
\end{equation}

\begin{figure}[htbp]
    \centering
    \begin{minipage}{0.45\textwidth}
        \centering
        \includegraphics[width=\textwidth]{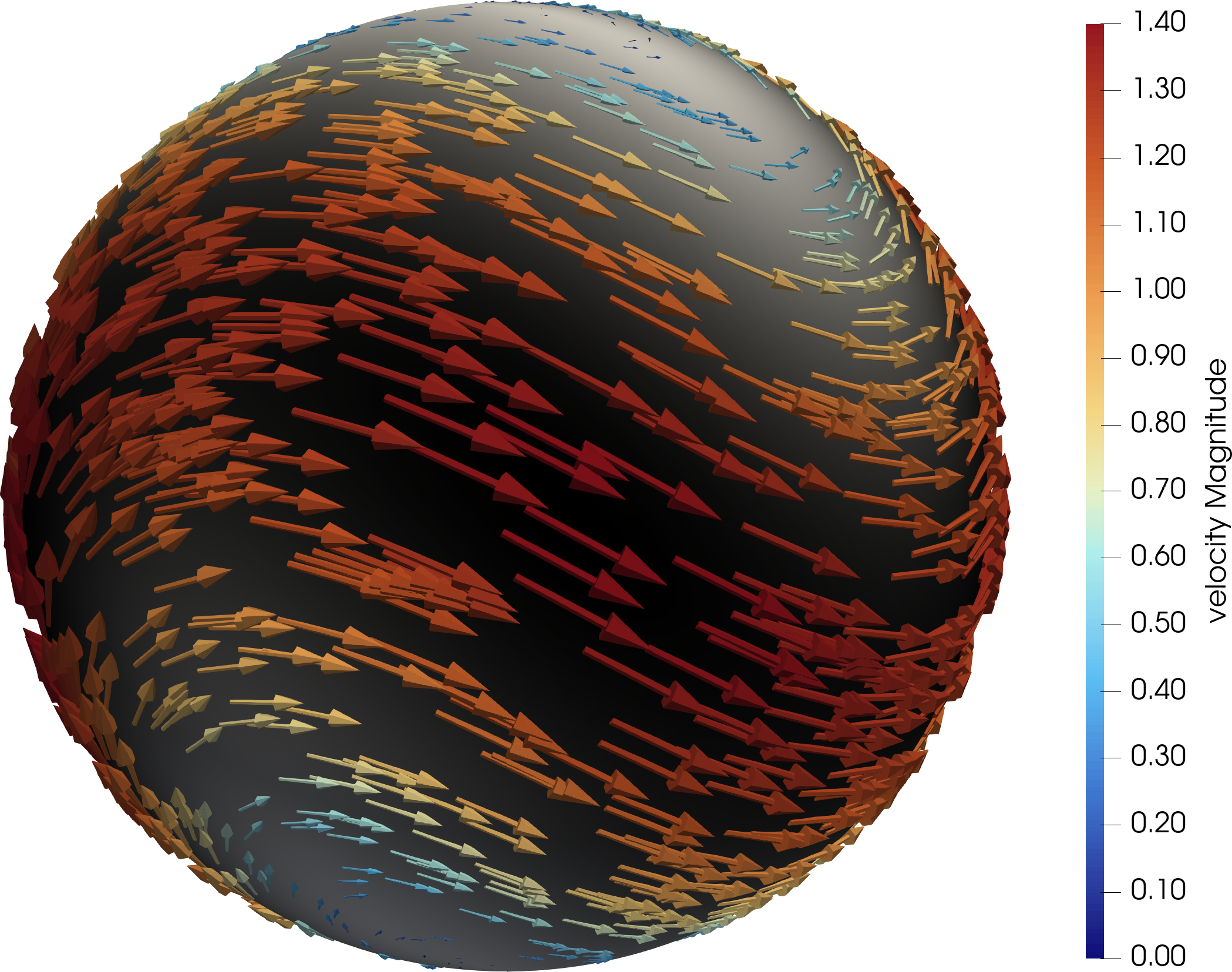}
    \end{minipage}
    \hfill 
    \begin{minipage}{0.45\textwidth}
        \centering
        \includegraphics[width=\textwidth]{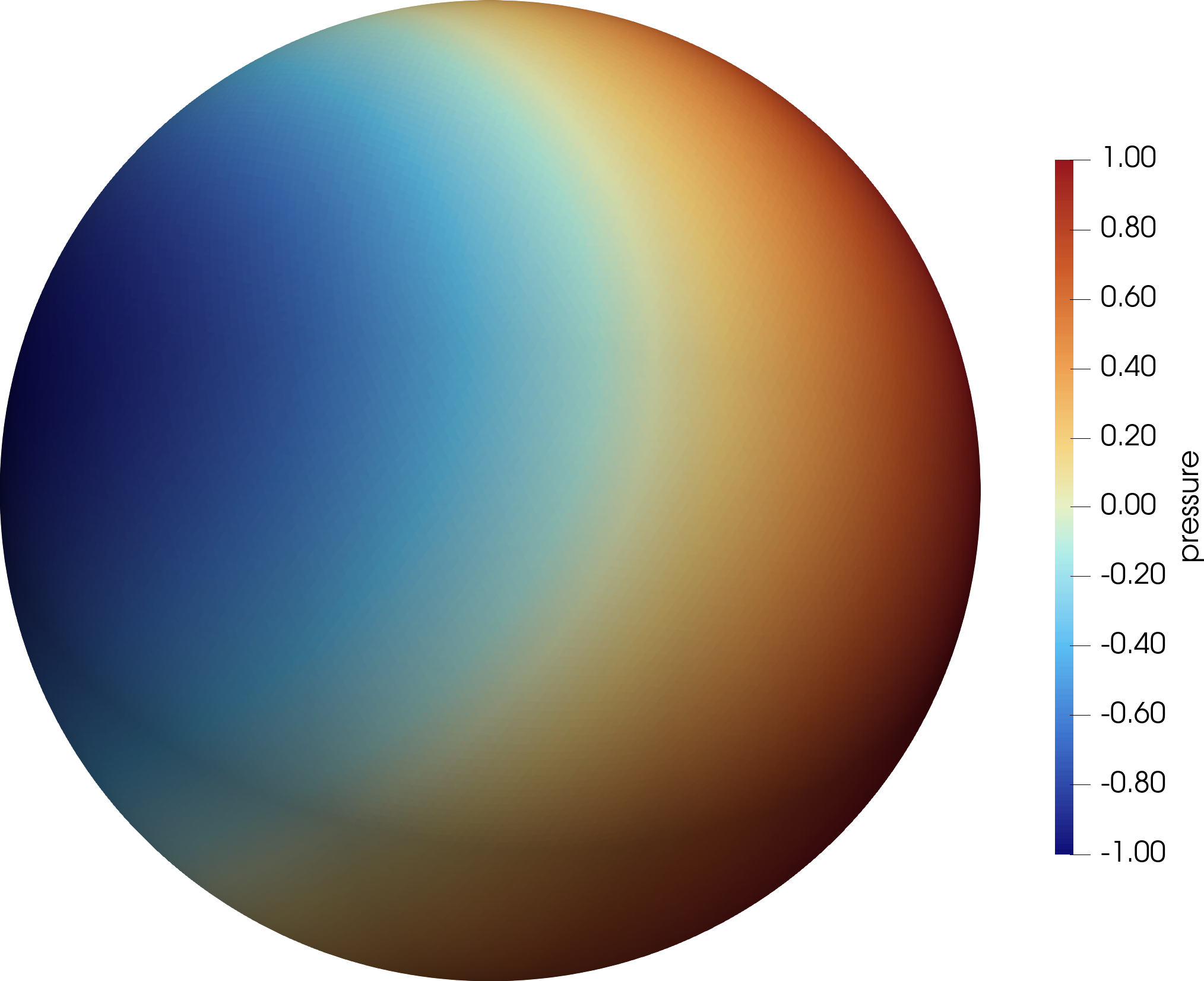}
    \end{minipage}
    \caption{Computed velocity (left) and pressure (right) solutions of the surface Stokes problem (\ref{eq1}) on the unit sphere $\Gamma^{\text{S}}$ to the analytic solution (\ref{86}) and (\ref{87}), respectively.}
    \label{fig2}
\end{figure}

\begin{table}[h]\centering
\caption{Experimental $\boldsymbol{H}^1$-errors and order of convergences on the sphere for different stabilization parameters $\alpha$ (\ref{eq13}). The error in the velocity field is denoted by $e_{\boldsymbol{u}}$, where the analytical solution is given in (\ref{86}).}
  \label{tab1}
\begin{tabular}{lcccccc}
\toprule
\multirow{2}[3]{*}{dof} & \multicolumn{2}{c}{$\alpha=0.5$} & \multicolumn{2}{c}{$\alpha=1.0$} & \multicolumn{2}{c}{$\alpha=2.0$} \\
\cmidrule(lr){2-3} \cmidrule(lr){4-5} \cmidrule(lr){6-7}
 & $ | e_{\boldsymbol{u}}|_{\boldsymbol{H}^1(\Gamma_h)}$ & Order & $|e_{\boldsymbol{u}}|_{\boldsymbol{H}^1(\Gamma_h)}$ & Order & $| e_{\boldsymbol{u}}|_{\boldsymbol{H}^1(\Gamma_h)}$ & Order \\
\midrule
960 & 2.52$e^{-01}$ & 0.00 & 2.60$e^{-01}$ & 0.00 & 2.83$e^{-01}$ & 0.00 \\
3840 & 8.52$e^{-02}$ & 1.57 & 9.09$e^{-02}$ & 1.52 & 1.08$e^{-01}$ & 1.38 \\
15360 & 3.51$e^{-02}$ & 1.28 & 3.82$e^{-02}$ & 1.25 & 4.85$e^{-02}$ & 1.16 \\
61440 & 1.65$e^{-02}$ & 1.09 & 1.80$e^{-02}$ & 1.08 & 2.35$e^{-02}$ & 1.05 \\
245760 & 8.08$e^{-03}$ & 1.03 & 8.86$e^{-03}$ & 1.02 & 1.16 $e^{-02}$ & 1.01 \\
\bottomrule
\end{tabular}
\end{table}

   \begin{table}[h]\centering
\caption{Experimental $\boldsymbol{L}^2$-errors and order of convergences on the sphere for different stabilization parameters $\alpha$ (\ref{eq13}). The projected $\boldsymbol{L}^2$-error of the velocity field is denoted by $\mathbf{P}_h e_{\boldsymbol{u}}$, where the analytic solution is given in (\ref{86}).}
  \label{tab2}
\begin{tabular}{lcccccc}
\toprule
\multirow{2}[3]{*}{Dof} & \multicolumn{2}{c}{$\alpha=0.5$} & \multicolumn{2}{c}{$\alpha=1.0$} & \multicolumn{2}{c}{$\alpha=2.0$} \\
\cmidrule(lr){2-3} \cmidrule(lr){4-5} \cmidrule(lr){6-7}
 & $\| \textbf{P}_h e_{\boldsymbol{u}}\|_{\boldsymbol{L}^2(\Gamma_h)}$ & Order & $\|  \textbf{P}_h e_{\boldsymbol{u}}\|_{\boldsymbol{L}^2(\Gamma_h)}$ & Order & $\| \textbf{P}_h e_{\boldsymbol{u}}\|_{\boldsymbol{L}^2(\Gamma_h)}$ & Order \\
\midrule
960 & 1.28$e^{-01}$ & 0.00 & 1.32$e^{-01}$ & 0.00 & 1.39$e^{-01}$ & 0.00 \\
3840 & 3.33$e^{-02}$ & 1.94 & 3.44$e^{-02}$ & 1.94 & 3.63$e^{-02}$ & 1.94 \\
15360 & 8.41$e^{-03}$ & 1.98 & 8.68$e^{-03}$ & 1.98 & 9.16$e^{-03}$ & 1.99 \\
61440 & 2.11$e^{-03}$ & 2.00 & 2.18$e^{-02}$ & 2.00 & 2.30$e^{-03}$ & 2.00 \\
245760 & 5.28$e^{-04}$ & 2.00 & 5.45$e^{-04}$ & 2.00 & 5.75 $e^{-04}$ & 2.00 \\
\bottomrule
\end{tabular}
\end{table}

 \begin{table}[h]\centering
\caption{Experimental $L^2$-errors and order of convergences on the sphere for different stabilization parameters $\alpha$ (\ref{eq13}). The error in the pressure is denoted by $ e_{p}$, where the analytic solution is given in (\ref{87}).}
  \label{tab3}
\begin{tabular}{lcccccc}
\toprule
\multirow{2}[3]{*}{Dof} & \multicolumn{2}{c}{$\alpha=0.5$} & \multicolumn{2}{c}{$\alpha=1.0$} & \multicolumn{2}{c}{$\alpha=2.0$} \\
\cmidrule(lr){2-3} \cmidrule(lr){4-5} \cmidrule(lr){6-7}
 & $\|  e_p\|_{L^2(\Gamma_h)}$ & Order & $\| e_p \|_{L^2(\Gamma_h)}$ & Order & $\| e_p\|_{L^2(\Gamma_h)}$ & Order \\
\midrule
320 & 4.83$e^{-02}$ & 0.00 & 4.85$e^{-01}$ & 0.00 & 4.90$e^{-02}$ & 0.00 \\
1280 & 1.53$e^{-02}$ & 1.65 & 1.40$e^{-02}$ & 1.79 & 1.36$e^{-02}$ & 1.85 \\
5120 & 4.88$e^{-03}$ & 1.65 & 3.98$e^{-03}$ & 1.82 & 3.65$e^{-03}$ & 1.90 \\
20480 & 1.63$e^{-03}$ & 1.58 & 1.22$e^{-03}$ & 1.71 & 1.04$e^{-03}$ & 1.80 \\
81920 & 6.32$e^{-04}$ & 1.37 & 4.48$e^{-04}$ & 1.45 & 3.62 $e^{-04}$ & 1.53 \\
\bottomrule
\end{tabular}
\end{table}

To avoid cumbersome notation, we henceforth use $\Gamma$ to denote the underlying surface, where $\Gamma$ represents either $\Gamma^{\text{S}}$ or $\Gamma^{\text{T}}$, depending on the example under consideration. The method is implemented in Python, following the discretization strategy employed in the climate model ICON \cite{MehlmannGutjahr2022}. We generate the corresponding approximate surfaces $\Gamma_h$ using the Python package $PyVista$ \cite{sullivan19}. For evaluating the errors, we prescribe an exact solution $(\boldsymbol{u},p)$ to (\ref{eq1}) and calculate the corresponding given data $\boldsymbol{f}$ from (\ref{eq1}) using $SymPy$ \cite{Meurer17}. The exact velocity is defined in terms of a stream function, $\boldsymbol{\psi}(x,y,z)$= $(xy+z)$ as follows
\begin{equation}
\label{86}
    \boldsymbol{u}= \boldsymbol{n} \times \nabla_{\Gamma} \boldsymbol{\psi},
\end{equation}
where the unit outward normal vector $\boldsymbol{n}$ can be determined analytically from the level-set formulation of the corresponding surface and the exact pressure solution is 
\begin{equation}
    \label{87}
    p=(x+yz).
\end{equation}
In this section, we denote the velocity error, $(\boldsymbol{u}^e-\boldsymbol{u}_h)$, and pressure error, $(p^e-p_h)$, by $e_{\boldsymbol{u}}$ and $e_p$, respectively. Here, we assess the accuracy of the method by measuring the velocity error in the broken $\boldsymbol{H}^1$ semi-norm and the broken $\boldsymbol{L}^2$-norm, defined in Section \ref{sec4.1} and the pressure error with respect to the $L^2$-norm, evaluated on the respective approximate surfaces. To investigate the influence of the stabilization parameter (\ref{eq13}), we perform numerical experiments on both surfaces for three choices of $\alpha$, namely $\alpha$ = 0.5, 1.0 and 2.0.

\begin{figure}[htbp]
    \centering
    \begin{minipage}{0.49\textwidth}
        \centering
        \includegraphics[width=\textwidth]{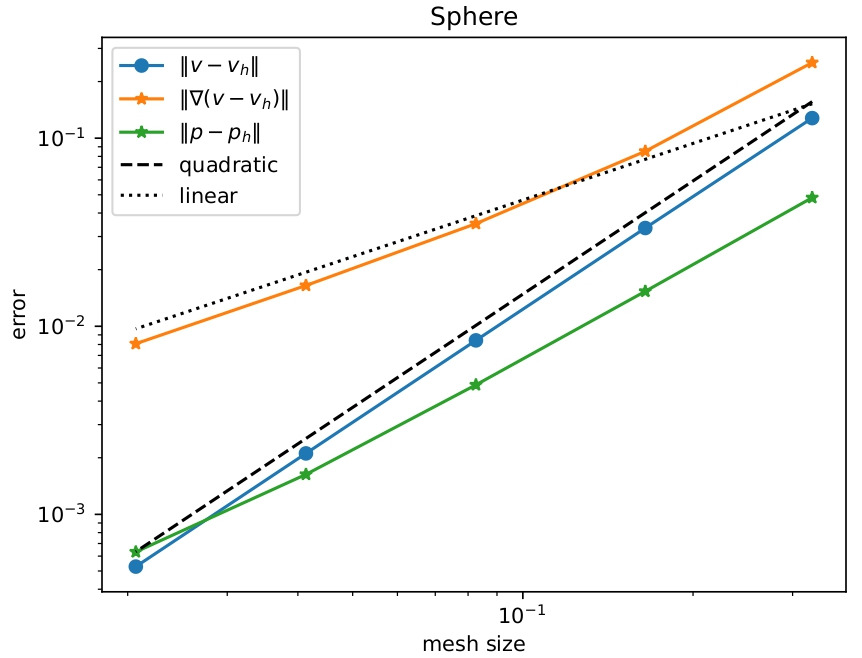}
    \end{minipage}
    \hfill 
    \begin{minipage}{0.49\textwidth}
        \centering
        \includegraphics[width=\textwidth]{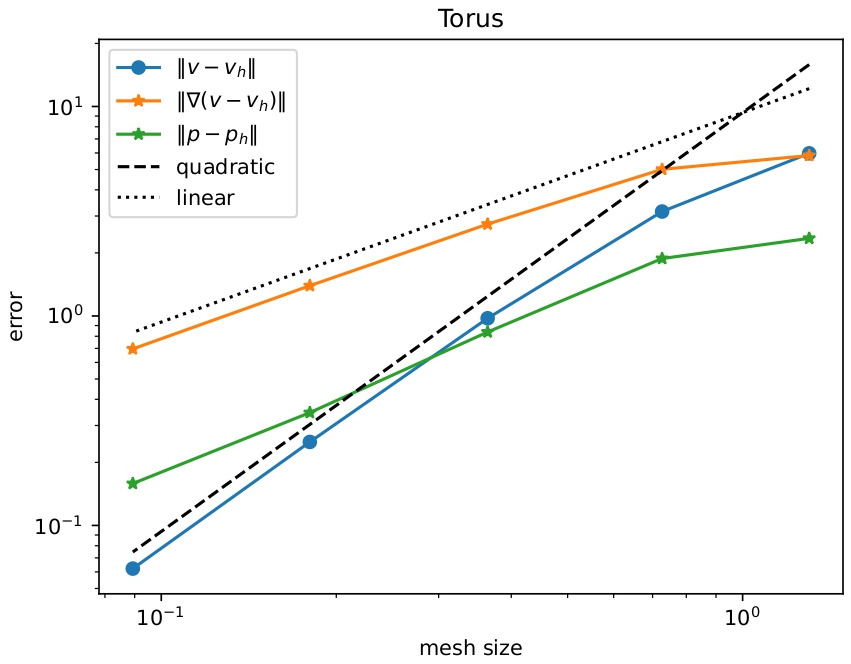}
    \end{minipage}

    \begin{minipage}{0.49\textwidth}
        \centering
        \includegraphics[width=\textwidth]{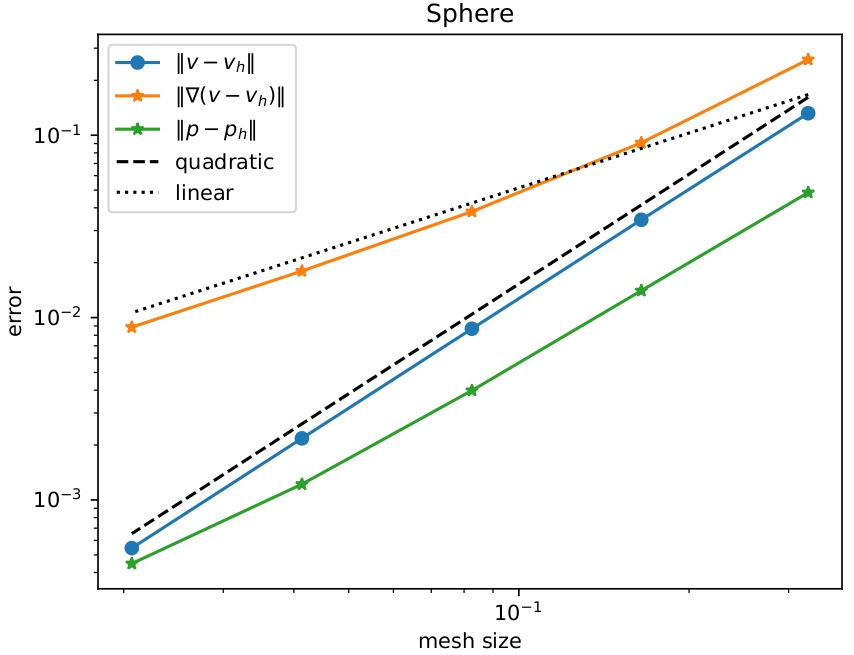}
    \end{minipage}
    \hfill 
    \begin{minipage}{0.49\textwidth}
        \centering
        \includegraphics[width=\textwidth]{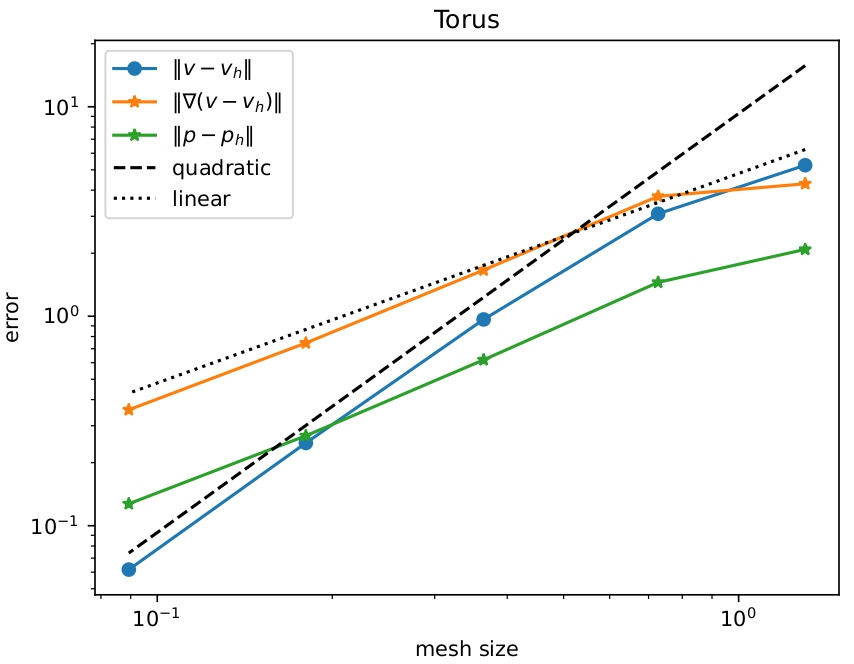}
    \end{minipage}

    \begin{minipage}{0.49\textwidth}
        \centering
        \includegraphics[width=\textwidth]{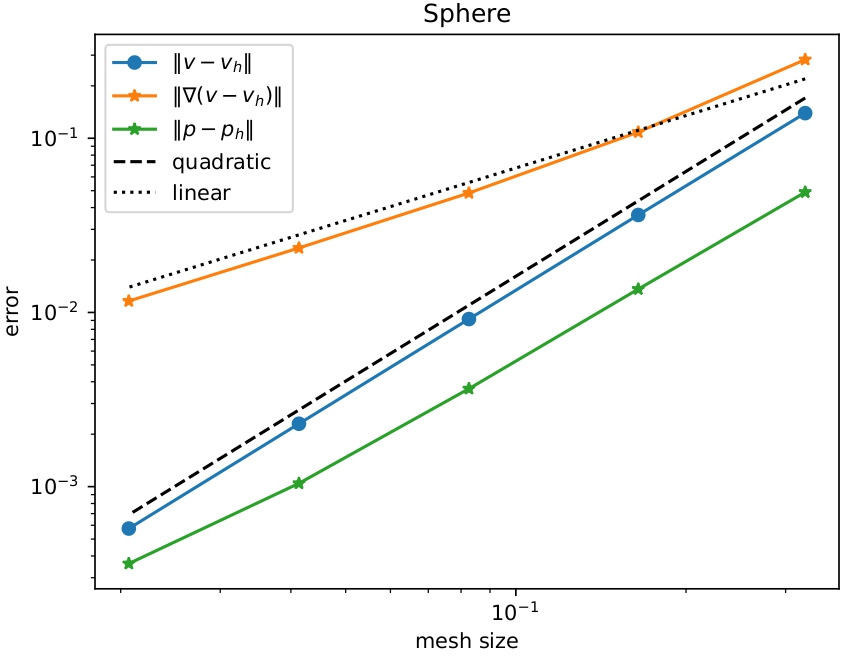}
    \end{minipage}
    \hfill 
    \begin{minipage}{0.49\textwidth}
        \centering
        \includegraphics[width=\textwidth]{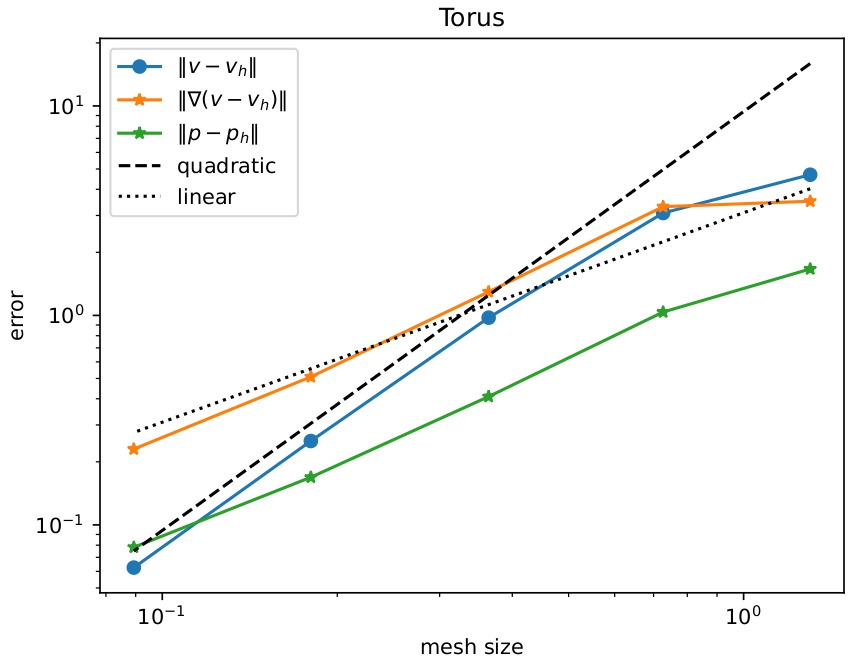}
    \end{minipage}
\caption{ $\boldsymbol{H}^1$-error velocity field (orange), projected $\boldsymbol{L}^2$-error of the velocity field (blue) and $L^2$-error in the pressure evaluated on the sphere (left) and the torus (right). The errors are calculated with respect to the analytic solution given in (\ref{86}) and (\ref{87}), respectively. The dotted lines indicate the first order convergence, whereas the dashed lines display the second order convergence.  }
    \label{fig8}
\end{figure} 

\subsection{Results on the Sphere}
Figure \ref{fig2} depicts the velocity and pressure solutions obtained using the finite element formulation (\ref{eq13}) of the surface Stokes problem (\ref{eq1}) on the unit sphere (\ref{84}) with the stabilization parameter, $\alpha$=1.0. For this setup, Table \ref{tab1} presents the $\boldsymbol{H}^1$ semi-norm of the velocity error $e_{\boldsymbol{u}}$, evaluated for a sequence of five successively refined meshes and for three values of $\alpha$=0.5, 1.0 and 2.0. In all cases, the proposed method exhibits the predicted first-order convergence rate, obtained in Theorem \ref{thm3}. The smallest errors are consistently observed for $\alpha=0.5$. Furthermore, Table \ref{tab2} and Table \ref{tab3} present the projected velocity error, $\textbf{P}_h(\boldsymbol{u}^e- \boldsymbol{u}_h)$ and the pressure error with respect to the $L^2$-norm. The numerical results demonstrate second-order convergence for the projected velocity and first-order convergence for the pressure, in agreement with the theoretical error estimates, established in Theorem \ref{thm4}. Compared to the $\boldsymbol{H}^1$-errors of the velocity field, we see that the $\boldsymbol{L}^2$-velocity errors are relatively insensitive to the choice of the stabilization parameter. As shown in Table \ref{tab3}, we observe that the $L^2$- error in the pressure field decreases as $\alpha$ increases. Figure \ref{fig8} illustrates the convergence behavior of the velocity and pressure errors as functions of the mesh size, providing a clear visual confirmation of the observed rates in both the $\boldsymbol{H}^1$ semi-norm and the $\boldsymbol{L}^2$-norm.

\begin{figure}[htbp]
    \begin{minipage}{0.45\textwidth}
        \centering
        \includegraphics[width=\textwidth]{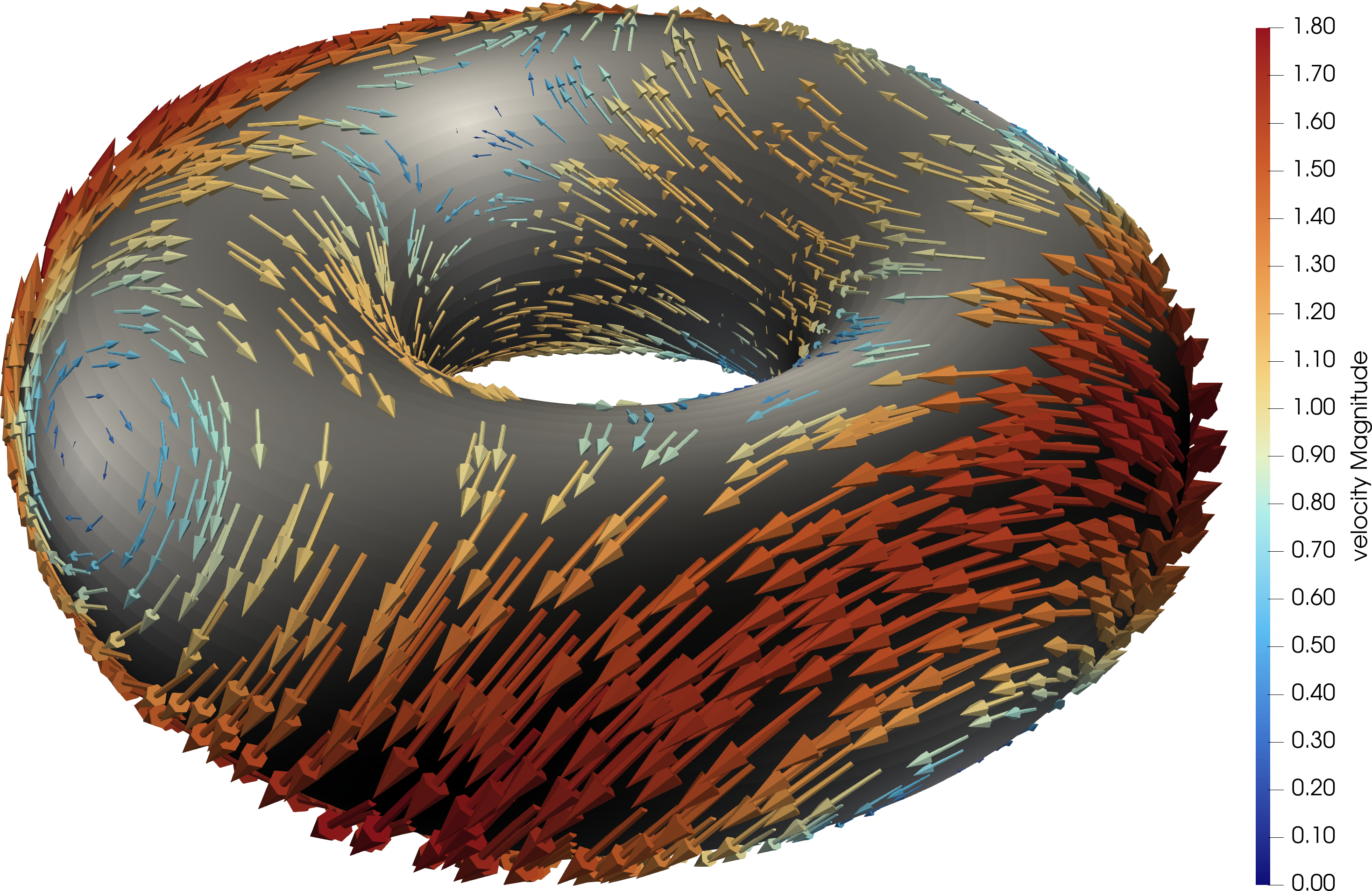}
    \end{minipage}
    \hfill 
    \begin{minipage}{0.45\textwidth}
        \centering
        \includegraphics[width=\textwidth]{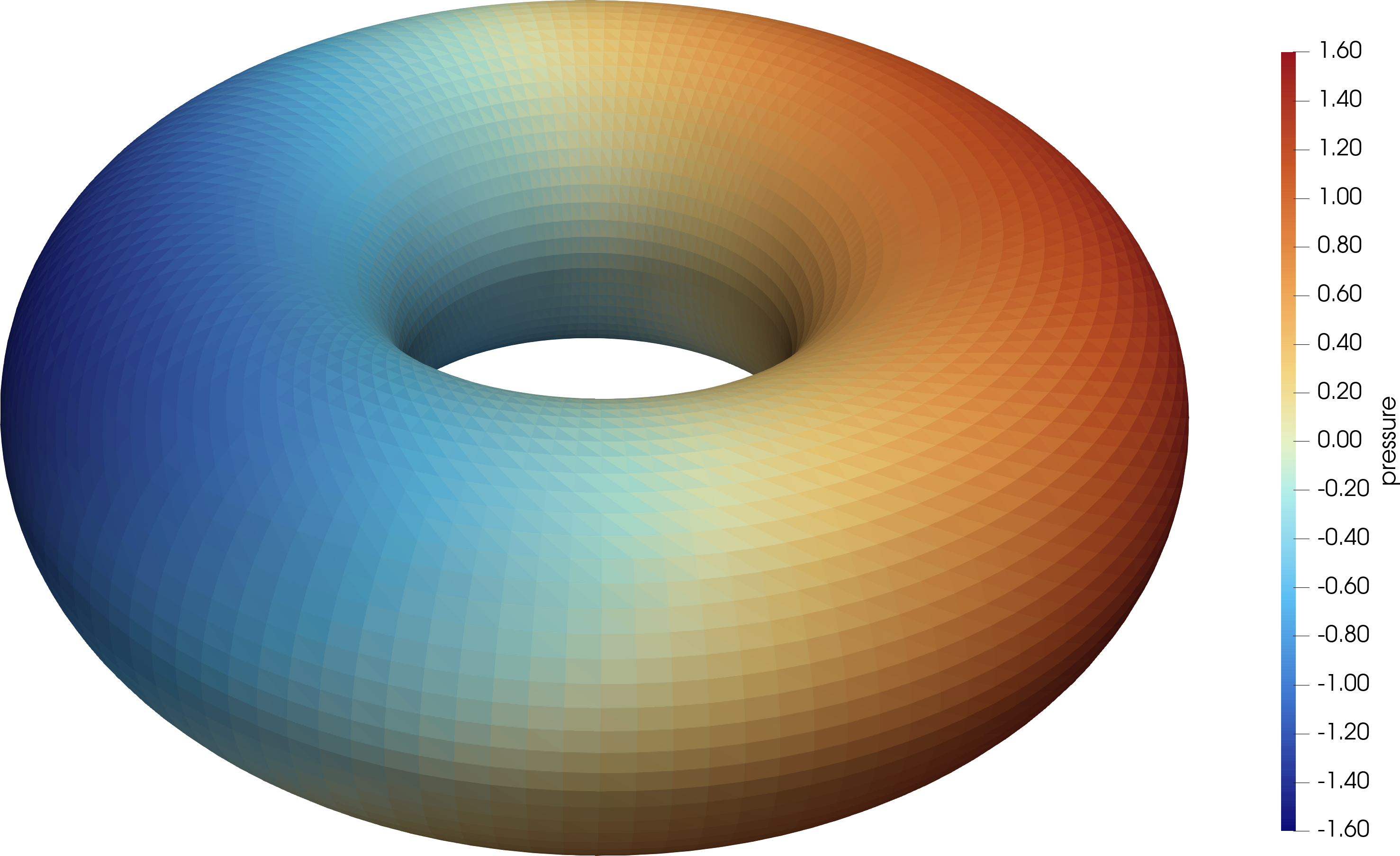}
    \end{minipage}
    \caption{Computed velocity (left) and pressure (right) solutions of the surface Stokes problem (\ref{eq1}) on the unit torus $\Gamma^{\text{T}}$ to the analytic solution (\ref{86}) and (\ref{87}), respectively.}
    \label{fig2a}
\end{figure}

\subsection{Results on the Torus}
Figure \ref{fig2a} presents the computed finite element velocity and pressure solutions  of the surface Stokes problem (\ref{eq1}) on the torus for the stabilization parameter, $\alpha=1.0$. The corresponding errors and experimental convergence rates for $\alpha=0.5$, $\alpha=1.0$ and $\alpha=2.0$ are reported in Tables \ref{tab4} - \ref{tab6}. The corresponding convergence plots are presented in Figure \ref{fig8}, illustrating the decay of velocity and pressure errors as the mesh size decreases. As in the case of the unit sphere, the numerical results exhibit the convergence rates predicted by the theoretical error estimates established in Theorem \ref{thm3} and Theorem \ref{thm4} for all choices of the stabilization parameter. In particular, the velocity error in the $\boldsymbol{H}^1$ semi-norm converges with first order, while the projected velocity error measured in the $\boldsymbol{L}^2$-norm exhibits second-order convergence. The pressure error also converges with the expected first-order rate in the $L^2$-norm. We observe that for each discretization level, the velocity error measured in the $\boldsymbol{H}^1$ semi-norm and the pressure error measured in the $L^2$-norm decrease as $\alpha$ increases and smallest errors are obtained for $\alpha=2.0$. In contrast, compared to the velocity error measured in the $\boldsymbol{H}^1$-norm, the projected velocity error in the $\boldsymbol{L}^2$-norm remains largely insensitive to variations in the stabilization parameter $\alpha$.

\begin{table}[htbp]
\caption{Experimental $\boldsymbol{H}^1$-errors and order of convergences on the torus for different stabilization parameters $\alpha$ (\ref{eq13}). The error in the velocity field is denoted by $e_{\boldsymbol{u}}$, where the analytical solution is given in (\ref{86}).}
  \label{tab4}
\begin{tabular}{lcccccc}
\toprule
\multirow{2}[3]{*}{Dof} & \multicolumn{2}{c}{$\alpha=0.5$} & \multicolumn{2}{c}{$\alpha=1.0$} & \multicolumn{2}{c}{$\alpha=2.0$} \\
\cmidrule(lr){2-3} \cmidrule(lr){4-5} \cmidrule(lr){6-7}
 & $ | e_{\boldsymbol{u}}|_{\boldsymbol{H}^1(\Gamma_h)}$ & Order & $|e_{\boldsymbol{u}}|_{\boldsymbol{H}^1(\Gamma_h)}$ & Order & $| e_{\boldsymbol{u}}|_{\boldsymbol{H}^1(\Gamma_h)}$ & Order \\
\midrule
126 & 5.83$e^{+00}$ & 0.00 & 4.29$e^{+00}$ & 0.00 & 3.51$e^{+00}$ & 0.00 \\
630 & 5.01$e^{+00}$ & 0.22 & 3.74$e^{+00}$ & 0.20 & 3.30$e^{+00}$ & 0.09 \\
2790 & 2.74$e^{+00}$ & 0.87 & 1.66$e^{+00}$ & 1.17 & 1.30$e^{+00}$ & 1.35 \\
11718 & 1.39$e^{+00}$ & 0.98 & 7.45$e^{-01}$ & 1.16 & 5.09$e^{-01}$ & 1.35 \\
48006 & 6.96$e^{-01}$ & 1.00 & 3.58$e^{-01}$ & 1.06 & 2.30$e^{-01}$ & 1.15 \\
\bottomrule
\end{tabular}
\end{table}

 \begin{table}[htbp]\centering
\caption{Experimental $\boldsymbol{L}^2$-errors and order of convergences on the torus for different stabilization parameters $\alpha$ (\ref{eq13}). The projected $\boldsymbol{L}^2$-error of the velocity field is denoted by $\mathbf{P}_h e_{\boldsymbol{u}}$, where the analytic solution is given in (\ref{86}).}
  \label{tab5}
\begin{tabular}{lcccccc}
\toprule
\multirow{2}[3]{*}{Dof} & \multicolumn{2}{c}{$\alpha=0.5$} & \multicolumn{2}{c}{$\alpha=1.0$} & \multicolumn{2}{c}{$\alpha=2.0$} \\
\cmidrule(lr){2-3} \cmidrule(lr){4-5} \cmidrule(lr){6-7}
 & $\| \textbf{P}_h e_{\boldsymbol{u}}\|_{\boldsymbol{L}^2(\Gamma_h)}$ & Order & $\|  \textbf{P}_h e_{\boldsymbol{u}}\|_{\boldsymbol{L}^2(\Gamma_h)}$ & Order & $\| \textbf{P}_h e_{\boldsymbol{u}}\|_{\boldsymbol{L}^2(\Gamma_h)}$ & Order \\
\midrule
126 & 5.98$e^{+00}$ & 0.00 & 5.26$e^{+00}$ & 0.00 & 4.70$e^{+00}$ & 0.00 \\
630 & 3.15$e^{+00}$ & 0.92 & 3.08$e^{+00}$ & 0.77 & 3.08$e^{+00}$ & 0.61 \\
2790 & 9.75$e^{-01}$ & 1.69 & 9.65$e^{-01}$ & 1.68 & 9.75$e^{-01}$ & 1.66 \\
11718 & 2.50$e^{-01}$ & 1.96 & 2.48$e^{-01}$ & 1.96 & 2.51$e^{-01}$ & 1.96 \\
48006 & 6.22$e^{-02}$ & 2.01 & 6.17$e^{-02}$ & 2.01 & 6.25$e^{-02}$ & 2.01 \\
\bottomrule
\end{tabular}
\end{table}

 \begin{table}[htbp]\centering
\caption{Experimental $L^2$-errors and order of convergences on the torus for different stabilization parameters $\alpha$ (\ref{eq13}). The error in the pressure is denoted by $ e_{p}$, where the analytic solution is given in (\ref{87}).}
  \label{tab6}
\begin{tabular}{lcccccc}
\toprule
\multirow{2}[3]{*}{Dof} & \multicolumn{2}{c}{$\alpha=0.5$} & \multicolumn{2}{c}{$\alpha=1.0$} & \multicolumn{2}{c}{$\alpha=2.0$} \\
\cmidrule(lr){2-3} \cmidrule(lr){4-5} \cmidrule(lr){6-7}
 & $\|  e_p\|_{L^2(\Gamma_h)}$ & Order & $\| e_p \|_{L^2(\Gamma_h)}$ & Order & $\| e_p\|_{L^2(\Gamma_h)}$ & Order \\
\midrule
42 & 2.35$e^{+00}$ & 0.00 & 2.80$e^{+00}$ & 0.00 & 1.67$e^{+00}$ & 0.00 \\
210 & 1.88$e^{+00}$ & 0.32 & 1.45$e^{+00}$ & 0.52 & 1.04$e^{+00}$ & 0.68 \\
930 & 8.38$e^{-01}$ & 1.17 & 6.19$e^{-01}$ & 1.20 & 4.10$e^{-01}$ & 1.34 \\
3906 & 3.46$e^{-01}$ & 1.28 & 2.69$e^{-01}$ & 1.20 & 1.69$e^{-01}$ & 1.28 \\
16002 & 1.58$e^{-01}$ & 1.12 & 1.27$e^{-01}$ & 1.08 & 7.82 $e^{-02}$ & 1.11 \\
\bottomrule
\end{tabular}
\end{table}


\end{document}